\documentclass[10pt]{article}

\usepackage{amsfonts}
\usepackage{amsmath}
\usepackage{amssymb}
\usepackage{amsthm}
\usepackage[alphabetic]{amsrefs}
\usepackage{calc} 
\usepackage{graphicx}
\usepackage[hidelinks]{hyperref}
\usepackage[left=1in,top=1in,right=1in,foot=1in]{geometry}
\usepackage{todonotes}
\usepackage{microtype}
\usepackage{tikz-cd}
\usepackage{forest}
\usepackage{dynkin-diagrams}
\usepackage{arydshln}

% Preamble

% Single-point control of single-point control. It only took six years!

% Linear Algebra

\newcommand{\diag}{\mathrm{diag}}
\newcommand{\pair}[2]{\left\langle #1 , #2\right\rangle}

\DeclareMathOperator{\spn}{span}

% Arrows and sets
\def\into{\mathrel{\hookrightarrow}}
\def\onto{\mathrel{\twoheadrightarrow}}

\newcommand{\sets}[2]{\left\{#1\,\middle|\,#2\right\}}
\newcommand{\genrel}[2]{\left\langle #1\,\middle|\,#2\right\rangle}

\newcommand{\sett}[1]{\left\{#1\right\}}

% Group theory specific commands

\newcommand{\stab}[2]{\mathrm{Stab}_{#1}(#2)}
% group action (shorthand)

\DeclareMathOperator{\rk}{rk}

% Functors, decorations, cannonical objects
% use Hom not hom to keep functor typesetting consistent

\newcommand{\Hom}[2]{\mathrm{Hom}(#1, #2)}

\newcommand{\id}{\mathrm{id}}

\DeclareMathOperator{\Ind}{Ind}

% Categories
%\DeclareMathOperator{\Obj}{Obj}
%\DeclareMathOperator{\Vect}{\mathbf{Vect}}

\newcommand{\Mod}{\mathbf{Mod}}

% Note that files from before February 2021 will use above depracated \Obj definition above

% equal with def above

%topology and geometry specific commands

\DeclareMathOperator{\Spec}{Spec}

% \scriptscriptstyle

%specific varieties and schemes
\newcommand{\Bb}{\mathcal{B}}

\newcommand{\X}{\mathcal{X}}
% Sheaves and cohomology
\newcommand{\HH}{\mathbf{H}}

\newcommand{\Ee}{\mathcal{E}}

% large connected sum

% Analysis
\newcommand{\norm}[1]{\left\| #1 \right\|}
\newcommand{\dInt}[2]{\int {#1} \,\mathrm{d} {#2}}
\newcommand{\dIntOver}[3]{\int_{#1} {#2} \,\mathrm{d} {#3}}

\DeclareMathOperator{\vol}{vol}

\newcommand{\trace}[2]{\mathrm{trace}\left({#1}\,,{#2}\right)}

% Fields and rings
\newcommand{\R}{\mathbb{R}}
\newcommand{\N}{\mathbb{N}}
\newcommand{\Z}{\mathbb{Z}}
\newcommand{\C}{\mathbb{C}}
\newcommand{\Q}{\mathbb{Q}}

\newcommand{\F}{\mathbb{F}}
\newcommand{\Oo}{\mathcal{O}}

% Lie theory 

\newcommand{\g}{\mathfrak{g}}

\newcommand{\bB}{\mathfrak{b}}

\newcommand{\aA}{\mathfrak{a}}

\newcommand{\G}{\mathbf{G}}

\newcommand{\Waff}{W_{\mathrm{aff}}}

% Coxeter groups and K-L theory
\newcommand{\cc}{\mathbf{c}}

% Specific groups

\newcommand{\GL}{\mathrm{GL}}

\newcommand{\SL}{\mathrm{SL}}

\newcommand{\SO}{\mathrm{SO}}
\newcommand{\Sp}{\mathrm{Sp}}

\newcommand{\GSp}{\mathrm{GSp}}
\newcommand{\PGSp}{\mathrm{PGSp}}

\newcommand{\Gm}{{\mathbb{G}_\mathrm{m}}}

\newcommand{\Sn}{\mathfrak{S}}

% Specific representations
\newcommand{\St}{\mathrm{St}}
\newcommand{\triv}{\mathrm{triv}}
\newcommand{\sgn}{\mathrm{sgn}}

% J-specific commands
\newcommand{\bq}{\mathbf{q}}

% Theorem environments
\newtheorem{theorem}{Theorem}
\newtheorem*{theorem*}{Theorem}
\newtheorem{cor}{Corollary}
\newtheorem{prop}{Proposition}
\newtheorem{lem}{Lemma}

\newtheorem{conj}{Conjecture}

\theoremstyle{definition}
\newtheorem{dfn}{Definition}
\newtheorem*{dfn*}{Definition}
\theoremstyle{remark}
\newtheorem{ex}{Example}
\newtheorem*{ex*}{Example}
\newtheorem{rem}{Remark}

%\input{/home/stefan/Documents/LaTeX/preamble.tex}

% for bold mathcal H
\DeclareMathAlphabet\mathbfcal{OMS}{cmsy}{b}{n}

% Keywords command
\providecommand{\keywords}[1]{\textbf{\textit{Keywords---}} #1}

\newcommand{\Gd}{G^\vee}

\newcommand{\aAG}{\mathfrak{a}_G}

\DeclareMathOperator{\rank}{rank}

\newcommand{\Cc}{\mathcal{C}}

\newcommand{\T}{\mathbb{T}}
\DeclareMathOperator{\val}{\mathrm{val}}

%\newcommand{\St}{\mathrm{St}}
%\newcommand{\bq}{\mathbf{q}}

%Fields for Lusztig's algebraic temperedness
\newcommand{\bk}{\bar{\mathcal{K}}}
\newcommand{\nbk}{\mathcal{K}}

%Prasads involution
\newcommand{\PrInv}{\bar{\kappa}}

\newcommand{\Ppp}{\mathcal{P}}

% completions

\newcommand{\HHh}{\mathbfcal{H}^-}
\newcommand{\JJ}{\mathbfcal{J}}

%\newcommand{\Waff}{\tilde{W}}

%\newcommand{\cc}{\mathbf{c}}

% Algebraic groups as functors of points
%\newcommand{\G}{\mathbf{G}}
\newcommand{\NN}{\mathbf{N}}
\newcommand{\Aa}{\mathbf{A}}
\newcommand{\B}{\mathbf{B}}
\newcommand{\PP}{\mathbf{P}}

\newcommand{\Vect}{\mathbf{Vect}_\C}

% previous map \phi_1 renamed to \hat{\phi} in this version. Labels still refer to \phi_1

\title{Denominators in Lusztig's asymptotic Hecke algebra via the Plancherel formula}
\date{\today}
\author{Stefan Dawydiak \thanks{Mathematical Institute, Universit\"{a}t Bonn, Bonn 53111 Germany; email \texttt{dawydiak@math.uni-bonn.de}}}

\begin{document}
\maketitle

\begin{abstract}
Let $\Waff$ be an extended affine Weyl group, $\HH$ be the corresponding affine Hecke algebra over the ring 
$\C[\bq^\frac{1}{2}, \bq^{-\frac{1}{2}}]$, and $J$ be 
Lusztig's asymptotic Hecke algebra, viewed as a based ring with basis $\{t_w\}$. Viewing
$J$ as a subalgebra of the $(\bq^{-\frac{1}{2}})$-adic completion of $\HH$ via Lusztig's map $\phi$,
we use Harish-Chandra's Plancherel formula for $p$-adic groups to 
show that the coefficient of $T_x$ in $t_w$ is a rational function of $\bq$, with denominator
depending only on the two-sided cell containing $w$, and
dividing a power of the 
Poincar\'{e} polynomial of the finite Weyl group. As an application, we conjecture that these denominators encode more detailed information about the failure of the Kazhdan-Lusztig classification at roots of the 
Poincar\'{e} polynomial than is currently known. 

Along the way, we show that upon specializing $\bq=q>1$, the map from
$J$ to the Harish-Chandra Schwartz algebra is injective.
As an application of injectivity,
we give a novel criterion for an Iwahori-spherical representation to have fixed vectors under a larger
parahoric subgroup in terms of its Kazhdan-Lusztig parameter.
\end{abstract}
\keywords{Asymptotic Hecke algebra, Iwahori-Hecke algebra, Plancherel formula, parahoric subgroup}
%
%\listoftodos

\tableofcontents
\section{Introduction}
\label{section Intro}
Let $\Waff$ be an affine Weyl group or extended affine Weyl group, and let $\HH$ be its associated
Hecke algebra over $\mathcal{A}:=\C[\bq^{1/2},\bq^{-1/2}]$, where $\bq$ is a formal variable. The representation theory
of $\HH$ is very well understood, behaving well and uniformly when $\bq$ is specialized to any $q\in\C^\times$
that is not a root of the Poincar\'{e} polynomial $P_W$ of the finite Weyl group $W\subset\Waff$.

When $\bq$ is specialized to a prime power $q$, the category of finite-dimensional modules over the specialized algebra $H$ is 
equivalent
to the category of admissible representations with nonzero Iwahori-fixed vector of some $p$-adic group.
A form of local Langlands correspondence, the Deligne-Langlands conjecture, has been established
by Kazhdan and Lusztig in \cite{KLDeligneLanglands}, where they classified modules over the generic algebra $\HH$ using algebraic $K$-theory. A slightly different approach to this classification due to Ginzburg 
is explained in \cite{CG}. In both treatments, a first step is to fix a central character. In particular, one 
must chose a complex number $q\in\C^\times$ by which $\bq$ will act. Decomposing the $K$-theory of certain 
subvarieties of Springer fibres into irreducible representations of a certain finite group yields
the \emph{standard modules}. It can happen that the standard modules are themselves simple (for 
example, simple tempered representations, which play an essential role in the present paper, are of this form), but in 
general simple modules are obtained as a certain unique nonzero quotient of standard modules. This quotient 
exists when $q\in\C^\times$ is not a root of unity, but can be zero otherwise. Lusztig conjectured
in \cite{LRepAffHecke} that this classification would in fact hold whenever $q$ was not a root 
of $P_W$, and this result was proven by Xi in \cite{XiJAMS}. 
%As an application, we prove as Theorem 
%\ref{thm root of unity} a more precise version of this result. 
Xi also showed that the classification fails in 
general at roots of the Poincar\'{e} polynomial, and presented this failure by giving an example related to a lack of simple 
$\HH|_{\bq=q}$-modules attached to the lowest two-sided cell.
Our results in this paper explain that the lowest two-sided cell is, in a precise sense, maximally singular 
with respect to the parameter $\bq$.

One way Lusztig expressed the uniformity in $q$ of the representation theory of the various algebras $
\HH|_{\bq=q}$ is via the asymptotic Hecke algebra $J$. This is a $\C$-algebra (in fact, a $\Z$-algebra)
$J$ with distinguished basis $\{t_w\}_{w\in\Waff}$, equipped with an injection
$\phi\colon\HH\into J\otimes_{\C}\mathcal{A}$. In this way there is a map from $J$-modules to 
$\HH$-modules, and Lusztig has shown in \cite{affineIII} and \cite{affineIV} that when $q$ 
is not a root of unity (other than $1$), that the specialized map $\phi_q$ induces a bijection between simple
$\HH|_{\bq=q}$-modules and simple $J$-modules, these last being defined over $\C$. Moreover, he showed that when 
$P_W(q)\neq 0$, the map $\phi_q$ induces an isomorphism 
\[
(\phi_q)_*\colon K_0(J-\Mod)\to K_0(\HH|_{\bq=q}-\Mod)
\]
of Grothendieck groups. 
The map $\phi$ becomes a bijection after completing $\HH$ and $J\otimes_\C\mathcal{A}$ by replacing 
$\mathcal{A}$ with 
$\C((\bq^{-1/2}))$ and allowing infinite sums convergent in the $(\bq^{-1/2})$-adic topology. 
In this way one can write a basis element $t_w$ as an infinite sum
\begin{equation}
\label{eq intro t_w as T_x lin combo}
{}^\dagger(-)\circ\phi^{-1}(t_w)=\sum_{x\in\Waff}a_{x,w}T_x,
\end{equation}
where each $a_{x,w}$ is a formal Laurent series in $\bq^{-1/2}$, and ${}^\dagger(-)$ is the involution of $\HH$ defined in 
Definition \ref{dfn Goldman involution}. It agrees with the Goldman involution of $\HH$ when $G$ is simply-connected.
In this paper we will almost exclusively work with $\phi\circ{}^\dagger(-)$, for reasons explained in Section \ref{subsection J and p-adic groups}.

In light of the above, it is natural
to ask how $a_{x,w}$ behaves when $\bq$ is specialized to a root of unity. 
\subsection{The asymptotic Hecke algebra and $p$-adic groups}
\label{subsection J and p-adic groups}
This paper is prompted by the work of Braverman and Kazhdan in \cite{BK}, who 
related the asymptotic Hecke algebra to harmonic analysis on $p$-adic groups. Specifically,
in \cite{affineIV}, Lusztig relates simple $J$-modules to certain $\HH\otimes_\mathcal{A}\C(\bq^{-1/2})$-modules
termed \emph{tempered} because their definition is made in analogy with Casselman's criterion
for temperedness of $p$-adic groups. In \cite{BK}, Braverman and Kazhdan showed essentially 
that the analytic meaning of the word ``tempered" can be substituted into Lusztig's results 
from \cite{affineIV}. 

More precisely, let $\G$ be a connected reductive group defined and split over a non-archimedean 
local field $F$ whose extended affine Weyl group is $\Waff$. Then in \cite{BK}, Braverman and Kazhdan 
define a map 
expressing $J$ as sitting between the Iwahori-Hecke algebra of $G=\G(F)$ and the Harish-Chandra Schwartz algebra $\mathcal{C}$,
and propose a spectral characterization of $J$ via the operator Payley-Wiener theorem, obtaining 
a diagram
\begin{equation}
\label{eqn BK summary diagram}
\begin{tikzcd}
H(G,I)\arrow[dd, "\sim"]\arrow[dr, hook, "\phi_q\circ{}^{\dagger}(-)"]\arrow[rr, hook]&&\mathcal{C}^{I\times I}\arrow[dd, "\sim"]\\
&J\arrow[ur, "\tilde{\phi}"]\arrow[d, "\eta"]&\\
\mathcal{E}^I\arrow[r, hook]&\mathcal{E}^I_J\arrow[r, hook]&\mathcal{E}_t^I,
\end{tikzcd}
\end{equation}
where the outer vertical maps are Fourier transform $f\mapsto\pi(f)$ and the rings $\Ee^I$
and $\Ee^I_t$ of endomorphisms of forgetful functors to vector spaces are as described by the operator 
Paley-Wiener theorem, as we recall in Section \ref{subsection the HC Schwartz algebra}.

The map $\eta$ is defined in \cite{BK} and we will recall its definition below. In particular,
it induces the map $\tilde{\phi}$, which then associates a Harish-Chandra Schwartz function
to every element of $J$ such that $\eta(j)=\pi(\tilde{\phi}(j))$, giving another way of associating to $t_w$ an expression similar to \eqref{eq intro t_w as T_x lin combo}.

This prompts several questions: whether $\eta$ (equivalently $\tilde{\phi}$)
is injective, whether it is surjective, and the nature of the relationship between the Schwartz 
function $\tilde{\phi}(t_w)$ and the expression \eqref{eq intro t_w as T_x lin combo}.

\subsubsection{Denominators in the affine Hecke algebra and injectivity of $\eta$}
In the first part of this paper we prove that $\eta$ is injective.
Along the way, we prove in Proposition \ref{prop phi_1=phi inverse} that $\tilde{\phi}$ is essentially the map $\phi^{-1}$. 
Our strategy is to determine that the Schwartz functions $f_w$ on the $p$-adic group $G$
satisfy the statements of Theorem \ref{thm general G denominators} below, and are in addition sufficiently 
well-behaved so as to lift to elements of a certain completion $\HHh$ of $\HH$, thus defining a map 
\[
\phi_1\colon J\otimes_\C\mathcal{A}\to\HHh.
\]
We therefore obtain two maps 
$J\subset J\otimes_\C\mathcal{A}\to\HHh$: the inverse ${}^\dagger(-)\circ\phi^{-1}$ of Lusztig's map, and our map $\phi_1$ induced by the construction in \cite{BK}. We prove in Proposition \ref{prop phi_1=phi inverse} that these maps agree, at which point Theorem \ref{thm conjecture is true GLn} and the first statement of Theorem \ref{thm general G denominators} follow.

In particular, $\phi_1$ is injective. Using that the representation theory of $J$ is sufficiently
independent of $q$, we then show in Corollary \ref{cor BK map injective} that $\tilde{\phi}$ is injective for any $q>1$. This is obviously equivalent to 
\begin{theorem}[Corollary \ref{cor BK map injective}]
The map $\eta$ is injective for any $q>1$.
\end{theorem}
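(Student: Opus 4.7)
The plan is to reduce the statement to the injectivity of $\tilde{\phi}$ using the diagram \eqref{eqn BK summary diagram}, and then to derive that from the injectivity of the formal map $\phi_1$ together with the $q$-independence of the representation theory of $J$.

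For the reduction, the outer right vertical arrow $\pi\colon\mathcal{C}^{I\times I}\to\mathcal{E}^I_t$ in \eqref{eqn BK summary diagram} is the Fourier transform, which is an isomorphism by the operator Paley--Wiener theorem recalled in Section \ref{subsection the HC Schwartz algebra}. The right-hand portion of the diagram commutes: the composite $J\xrightarrow{\tilde{\phi}}\mathcal{C}^{I\times I}\xrightarrow{\pi}\mathcal{E}^I_t$ equals $J\xrightarrow{\eta}\mathcal{E}^I_J\hookrightarrow\mathcal{E}^I_t$. Since $\mathcal{E}^I_J\hookrightarrow\mathcal{E}^I_t$ is injective and $\pi$ is an isomorphism, $\eta$ is injective if and only if $\tilde{\phi}$ is. It therefore suffices to prove $\tilde{\phi}$ is injective for each $q>1$.

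For the main step, I would invoke Proposition \ref{prop phi_1=phi inverse}, which identifies the Plancherel-constructed map $\phi_1\colon J\otimes_\C\mathcal{A}\to\HHh$ with $()^\dagger\circ\phi^{-1}$ on the subring $J$. Since $\phi$ is an isomorphism on the $(\bq^{-1/2})$-adic completion and the Goldman involution is a bijection, $\phi_1|_J$ is injective. For any $j\in J$, the coefficients in the expansion $\phi_1(j)=\sum_{x,w}c_w a_{x,w}(\bq)T_x$ are rational functions in $\bq$ with poles confined to a finite set of roots of unity, by the main structural result announced in the abstract. Consequently, specialization $\bq=q$ is well-defined for every $q>1$, and by the construction in \cite{BK} this specialization recovers $\tilde{\phi}(j)$ as a Harish-Chandra Schwartz function on $G$.

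The main obstacle is the passage from injectivity of $\phi_1$, which is a statement about rational functions of a formal variable, to injectivity of the specialization $\tilde{\phi}$ at a single value $q>1$, since in principle specialization can collapse $\C$-linear independence. To cross this gap I would use Lusztig's theorem from \cite{affineIV}, which gives, for every $q>1$, a bijection between the simple $J$-modules (all defined over $\C$) and the simple tempered $H_q$-modules via $\phi_q$. Because each two-sided cell summand $J_{\mathbf{c}}$ is known to be a direct sum of matrix algebras, its finite-dimensional representation theory is semisimple, and the simple $J$-modules separate points of $J$. If $\tilde{\phi}(j)=0$, then $j$ acts as zero on every tempered $H_q$-module, hence via the bijection on every simple $J$-module, forcing $j=0$ and establishing injectivity of $\tilde{\phi}$ for the given $q>1$.
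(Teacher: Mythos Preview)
Your reduction from $\eta$ to $\tilde{\phi}$ via the Fourier isomorphism is correct, and the overall strategy---exploit that the $J$-action on tempered modules is independent of $q$---is the same as the paper's. The gap is in your final paragraph. The assertion that each $J_{\cc}$ is ``a direct sum of matrix algebras'' with ``semisimple finite-dimensional representation theory'' is false for affine $\Waff$: each $J_{\cc}\otimes\C$ is infinite-dimensional, and already in type $A$ it is a matrix ring over a Laurent polynomial ring, whose finite-dimensional module category is not semisimple (e.g.\ $\C[z,z^{-1}]/(z-1)^2$). So this cannot be used to conclude that simple $J$-modules separate points of $J$. That separation statement is true, but you have not justified it, and it is not obviously lighter than the theorem itself.

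The paper closes this gap precisely by using the injectivity of $\phi_1$ that you set up in your middle paragraph but then abandon. Given $j\neq 0$, injectivity of $\phi_1$ gives $\phi_1(j)\neq 0$ in $\HHh$; since the coefficients are rational in $\bq$ with poles only at finitely many roots of unity, one can choose an auxiliary $q_0>1$ at which the specialization is a nonzero Schwartz function. The Plancherel theorem at $q_0$ then produces a tempered standard module $K(u,s,\rho,q_0)$ on which $j$ acts nontrivially. The point is that this module is the restriction, via $\phi_{q_0}\circ()^\dagger$, of a single $J$-module $E(u,s,\rho)$ defined over $\C$ and independent of $q$; restricting $E(u,s,\rho)$ instead via $\phi_q\circ()^\dagger$ yields the tempered $H_q$-module $K(u,s,\rho,q)$, on which $j$ still acts nontrivially, whence $\tilde{\phi}_q(j)\neq 0$. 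In other words, rather than invoking an abstract separation property for $J$, the paper manufactures the witnessing $J$-module from the Plancherel formula at some $q_0$ and then transports it to the given $q$.
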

A weaker form of the following Theorem, which is the main result of this paper, was conjectured by Kazhdan.

%This paper is prompted by the work of Braverman and Kazhdan in \cite{BK}, who associated a Schwartz function
%to an element of $J$, giving another way of associating to $t_w$ an expression similar to \eqref{eq intro t_w as T_x lin combo}. Along the way, we prove in Proposition \ref{prop phi_1=phi inverse} that their construction is essentially the map $\phi^{-1}$. The following conjecture
%was first made by Kazhdan; we have strengthened it based on the results we obtain in the present paper.
%
%
\begin{theorem}
\label{thm general G denominators}
Let $\Waff$ be an affine Weyl group, $\HH$ its affine Hecke algebra over $\mathcal{A}$, and 
$J$ its asymptotic Hecke algebra. Let $\phi\colon \HH\into J\otimes_\C\mathcal{A}$ be Lusztig's map.
\begin{enumerate}
\item
For all $x,w\in\Waff$, $a_{x,w}$ is a rational function of $\bq$. The denominator of 
$a_{x,w}$ is independent of $x$. As a function of $w$, it is constant on two-sided cells. 
\item 
There exists $N_{\Waff}\in\N$ such that upon writing 
\[
{}^\dagger(-)\circ\phi^{-1}(t_w)=\sum_{x\in\Waff}a_{x,w}T_x,
\]
we have
\[
P_W(\bq)^{N_{\Waff}}a_{x,w}\in\mathcal{A}
\]
for all $x,w\in\Waff$.
\item
If $d$ is a distinguished involution in the lowest two-sided cell, then $a_{1,d}=1/P_W(\bq)$ exactly.
\end{enumerate} 
\end{theorem}
In \cite{D}, the author proved Theorem \ref{thm general G denominators} in type $\tilde{A}_1$, but with 
different conventions. To translate to the conventions of this paper, the reader should replace $j$
with the involution ${}^\dagger(-)$, and the completion of $\HH$ with respect to the $C_w$ basis
and positive powers of $\bq^{1/2}$ with the completion of $\HH$ with respect to the basis 
$\{(-1)^{\ell(w)}C'_w\}_{w\in\Waff}$  and negative powers of $\bq^{1/2}$. Note also that we write $a_{x,w}$ 
instead of $a_{w,x}$ as in \cite{D}. In \cite{N}, Neunh\"{o}ffer described the coefficients $a_{x,w}$ for finite 
Weyl groups. 

In future, it would be desirable to also treat the case of unequal parameters, where a result like that of 
\cite{BDD} governing denominators is not yet available. On the other hand, when $G=\GL_n$, we are able to be 
slightly more precise than Theorem \ref{thm general G denominators}, while also not appealing to \cite{BDD}. 
For this reason we treat the case $G=\GL_n$ separately as Theorem \ref{thm conjecture is true GLn} in Section \ref{subsection proof of them conjection is true GLn for GLn}.

Our main tool is Harish-Chandra's Plancherel formula for the $p$-adic group $G$ associated to $\HH$
and the surjection of cocentres induced by Lusztig's map $\phi$ after inverting $P_W(\bq)$
proved in \cite{BDD}\footnote{In fact, as proved by Bezrukavnikov-Braverman-Kazhdan in the appendix of \textit{loc. cit.}, $\phi_q$ induces an isomorphism whenever $q$ is not a root of unity, but we will not need this.}. We invoke \cite{BDD} only at the very end of our argument, which, absent \cite{BDD}, still proves
that $a_{x,w}$ are rational functions with denominator depending only the two-sided cell containing $w$.
We do so with an eye to future work dealing with Hecke algebras with unequal parameters.

%The proof of the theorem relies
%on certain cancellations taking place while computing the integrals given by the Plancherel formula. In 
%particular, the theorem is sometimes only true thanks to cancellations involving formal degrees. For this
%reason, for general $\G$, we can presently only prove the following weaker form of Conjecture \ref{conj general G denominators}:
%%
%\begin{theorem}
%\label{thm weaker version for general G}
%Let $\Waff$ be an affine Weyl group, $\HH$ its affine Hecke algebra over $\mathcal{A}$, and 
%$J$ its asymptotic Hecke algebra. Let $\phi\colon \HH\into J\otimes_\C\mathcal{A}$ be Lusztig's map.
%%
%\begin{enumerate}
%\item
%For all $x,w\in\Waff$, $a_{x,w}$ is a rational function of $\bq$. The denominator of 
%$a_{x,w}$ is independent of $x$. As a function of $w$, it is constant on two-sided cells.
%%
%\item 
%There is a polynomial $P_{\G}(\bq)$ depending only on $\G$ such that upon writing 
%%
%\[
%()^\dagger\circ\phi^{-1}(t_w)=\sum_{x\in\Waff}a_{x,w}T_x,
%\]
%%
%we have
%%
%\[
%P_{\G}(\bq)a_{x,w}\in\mathcal{A}
%\]
%%
%for all $x,w\in\Waff$. The roots of $P_{\G}(\bq)$ are all roots of unity.
%%
%\item
%Conjecture \ref{conj general G denominators} is true for the lowest two-sided cell $\cc_0$:
%If $w\in\cc_0$, then $P_W(\bq)a_{x,w}\in\mathcal{A}$ for all $x\in\Waff$.
%If $d$ is a distinguished involution in $\cc_0$, then $a_{1,d}=1/P_W(\bq)$
%exactly. 
%%
%\item
%Moreover, $P_{\G}(\bq)d(\omega)\in\mathcal{A}$ for all formal degrees $d(\omega)$ of discrete series representations of $H$.
%%
%\end{enumerate}
%\end{theorem}
%%
In \cite{D}, the author related a conjecture of Kazhdan concerning 
the positivity of some coefficients related to the coefficients $a_{x,w}$.
Historically, proofs of such positivity phenomena have also provided 
interpretations of the positive quantities in question. 
While we cannot currently prove the conjecture in \cite{D}, our results in Section 
\ref{section parahoric-fixed} hint at a possible interpretation of $a_{1,d}$ for certain distinguished 
involutions $d$.
\begin{rem}
\label{rem more precise version}
It is tempting to conjecture the following more precise version of Theorem \ref{thm general G denominators}, based on the factorization $P_{\mathbf{M}_{\mathbf{P}}}(\bq)P_{\G/\PP}(\bq)=P_{\G/\B}(\bq)$: for every Levi subgroup $\mathbf{M}_{\mathbf{P}}$ 
of $\G$ and all $\omega\in\mathcal{E}_2(M_P)$, the formal degree
$d(\omega)$ is a rational function of $q$ the denominator of which divides a power of 
$P_{\mathbf{M}_{\mathbf{P}}}(q)$.
The integral over all induced twists $\Ind_P^G(\nu\otimes\omega)$ is a rational function of $q$
with denominator dividing a power of the Poincar\'{e} polynomial of the partial flag variety $(\G/\PP)(\C)$. For example, if $G=\GL_6(F)$ and $M=\GL_3(F)\times\GL_3(F)$.
In this case the integral itself (omitting the factor $C_M$ in the notation of Section \ref{subsection Plancherel formula for GLn}) is
\[
\frac{1}{2\pi i}\frac{1}{2\pi i}\int_\T\int_\T\frac{(z_1-z_2)(z_1-z_2)}{(z_1-q^3z_2)(z_1-q^{-3}z_2)}\frac{\mathrm{d}z_1}{z_1}\frac{\mathrm{d}z_2}{z_2}=\frac{(1-q^3)^2}{(1-q^6)^2q^3}+1
=\frac{(1-q^3)q^{-3}}{1+q^3}+1,
\]
and by \cite{BottTu} Proposition 23.1, (with $\bq=t^2$) we have
\[
P_{\G/\PP}(\bq)=P_{G(3,6)}(\bq)=(1+\bq^2)(1+\bq+\bq^2+\bq^3+\bq^4)(1+\bq^3).
\]
In examples such as the above, this does indeed happen, but only after cancellation with some terms in 
the numerator. In general, we will not track numerators precisely enough to show this
version of Theorem \ref{thm general G denominators}. We shall however see a limited demonstration of this behaviour in Corollary \ref{cor denominators of fd(1) poincare}.
\end{rem}
\subsubsection{Denominators in the affine Hecke and the Kazhdan-Lusztig classification at roots of unity}
\label{subsubsubection Denominators and the KL classification at roots of 1}
The affine Hecke has a filtration by two-sided ideals
\[
\HH^{\geq i}=\spn{\sets{C_w}{a(w)\geq i}},
\]
where $a$ is Lusztig's $a$-function. 
As such, for any $q\in\C^\times$ and any simple $H=\HH|_{\bq=q}$-module $M$ there is an integer $a(M)$ such 
that $H^{\geq i}M\neq 0$ but $H^{\geq i+1}M= 0$.
Define $a(M)=i$ to be this integer. One can also define $a(E)=a(\cc(E))$ where $E$ is a simple $J$-module and 
$J_{\cc(E)}$ is the unique two-sided ideal not annihilating $E$.

The algebra $J$ linearizes the above filtration into an honest direct sum, and implements
the almost-independence on $q\in\C^\times$ of the representation theory of 
$H=\HH|_{\bq=q}$ as follows. 
\begin{theorem}[\cite{XiJAMS}]
\begin{enumerate}
\item 
Suppose that $q$ is not a root of the Poincar\'{e} polynomial of $\G$.
Then for each simple $J$-module $E$, the $H$ module 
${}^{\phi_q} E$ has a unique simple quotient $L$ such that $a(E)=a(L)$.
For all other simple subquotients $L'$ of $E$, we have $a(L')<a(E)$.

Equivalently, for all admissible triples $(u,s,\rho)$, the representation 
$K(u,s,\rho, q)$ of $H$ has a unique nonzero simple quotient 
$L=L(u,s,\rho, q)$ such that $a(L)=a(\cc(u))$. That is, the Deligne-Langlands conjecture is true for $\HH|_{\bq=q}$.
\item
If $q$ is a root of the Poincar\'{e} polynomial of $\G$, then
the Deligne-Langlands conjecture is false for the lowest cell. That is,  
if $u=\{1\}$, then every simple subquotient $L'$ of $K(u,s,\rho,q)$
has $a(L')<a(\cc_0)$.
\end{enumerate}
\end{theorem}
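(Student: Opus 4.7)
Since this result is due to Xi \cite{XiJAMS}, my plan follows his strategy, which hinges on the interplay between the map $\phi_q$, the cell decomposition $J = \bigoplus_{\cc} J_{\cc}$, and the $a$-filtration on $H$. The central observation is that $\phi(C_w)$ equals $t_w$ modulo the two-sided ideal spanned by deeper cells, up to explicit leading coefficients built from Kazhdan-Lusztig polynomials and powers of $\bq$, so that on the associated graded $\gr_a H$ at $\bq = q$ the map $\phi_q$ becomes close to an identification precisely when these leading coefficients do not vanish at $q$.

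For part (1), let $E$ be a simple $J$-module concentrated in a cell $\cc$ with $a(\cc) = i_0$, and pull it back to the $H$-module ${}^{\phi_q}E$. The filtration induced by $H^{\geq i}$ on ${}^{\phi_q}E$ has a ``top'' layer, stable under $H^{\geq i_0}/H^{\geq i_0+1}$, which carries a $J_{\cc}$-action isomorphic to $E$. Under the nondegeneracy condition $P_W(q) \neq 0$, this top layer projects to a unique simple quotient $L = L(u, s, \rho, q)$ with $a(L) = i_0$, while every other composition factor of ${}^{\phi_q}E$ lies in a strictly deeper piece of the filtration and therefore has strictly smaller $a$-value. The correspondence between simple $J$-modules in $\cc$ and admissible triples $(u, s, \rho)$ with $\cc = \cc(u)$ then translates this statement into the Deligne-Langlands formulation.

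For part (2), the obstruction is identified concretely using Theorem \ref{thm weaker version for general G}(3), which states that for a distinguished involution $d \in \cc_0$ the coefficient $a_{1,d}$ equals $1/P_W(\bq)$ exactly. At $\bq = q$ with $P_W(q) = 0$, the expression $()^\dagger \circ \phi^{-1}(t_d)$ therefore acquires a pole, which at the level of modules reflects that the would-be $H$-modules attached to $\cc_0$---namely the discrete series, whose formal degrees are controlled by $P_W$---have degenerated. Since the lowest cell corresponds to $u = 1$ under Lusztig's bijection between cells and unipotent classes, every composition factor of a standard module $K(1, s, \rho, q)$ must have $a$-value strictly less than $a(\cc_0)$. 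The main obstacle in this approach is to verify that this degeneration is confined exactly to $\cc_0$ and occurs at every root of $P_W$; this requires tracking precisely which Kazhdan-Lusztig leading coefficients pick up factors of $P_W$, which in the lowest-cell case is the content of Theorem \ref{thm weaker version for general G}(3).
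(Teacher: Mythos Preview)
This theorem is not proved in the paper; it is quoted from \cite{XiJAMS} as background and motivation for Conjecture~\ref{conj KL failure cell-by-cell}. There is therefore no proof in the paper to compare your proposal against.

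That said, your sketch has a genuine gap in part~(2). You invoke Theorem~\ref{thm weaker version for general G}(3) to say that $a_{1,d} = 1/P_W(\bq)$ has a pole at each root of $P_W$, and then claim this ``reflects'' that the standard modules attached to $\cc_0$ degenerate. But the implication runs in the wrong direction for your purposes. In this paper, the pole in $a_{1,d}$ is computed via the Plancherel formula and is offered as \emph{evidence} for, and a refinement of, Xi's phenomenon---not as a proof of it. The precise link between poles of the coefficients $a_{x,w}$ and failure of the Kazhdan--Lusztig classification is the content of Conjecture~\ref{conj KL failure cell-by-cell}, which is stated immediately after the Xi theorem and remains open. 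A pole in the formal expansion of $\phi^{-1}(t_d)$ does not by itself force every composition factor of $K(1,s,\rho,q)$ to have $a$-value strictly below $a(\cc_0)$; one needs a separate module-theoretic argument. Xi's actual proof proceeds directly, essentially by showing that for $w\in\cc_0$ the elements $C_w$ annihilate every simple $H|_{\bq=q}$-module when $P_W(q)=0$, so that no simple module can have $a$-value equal to $a(\cc_0)$.

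Your outline of part~(1) is in the right spirit---the $a$-filtration and the comparison of $\gr_a H$ with $J$ are indeed the ingredients---but it is only a plan. The assertion that the top layer of ${}^{\phi_q}E$ yields a unique simple quotient with the correct $a$-value \emph{precisely} when $P_W(q)\neq 0$ is the entire content of the theorem, and you have not explained where the hypothesis $P_W(q)\neq 0$ is actually used beyond the word ``nondegeneracy.''
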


By Theorem \ref{thm general G denominators}, the coefficients $a_{1,d}$ have poles
at every root of $P_W$, for all distinguished involutions $d$ in the lowest two-sided cell $\cc_0$. On the other hand, as we show in Example \ref{ex less singular cell}, there do exist cells $\cc\neq\cc_0$ 
such that the coefficients $a_{x,w}$ are nonsingular at certain roots of $P_W$, for all $w\in\cc$ 
and $x\in\Waff$. We encode the hope that this is no accident as
\begin{conj}
\label{conj KL failure cell-by-cell}
Let $\tilde{W}$ be an affine Weyl group, and let $q\in\C^\times$ be a 
root of $P_W$. Let $\cc$ be a two-sided cell such that if $w\in\cc$, then $a_{x,w}$ does not have a pole
at $\bq=q$ for any $x\in\Waff$. Let $u=u(\cc)$. Let $K(u,s,\rho)$ be a standard module in the notation of \cite{KLDeligneLanglands}. Then the module ${}^\dagger{}^*K(u,s,\rho,q)$ (see Definition \ref{dfn Goldman involution} (a), (b) and the discussion following Theorem \ref{BK nsp theorem}) has a unique simple quotient $L=L(u,s,\rho, q)$ such that $a(L)=a(E)$, where $E$ is the simple $J$ module corresponding via $\phi$ to $(u,s,\rho)$ under \cite[Thm. 4.2]{affineIV}. Two such simple modules are isomorphic if and only if 
their corresponding triples are conjugate.
\end{conj}
Note that in type $\tilde{A}_n$, the number of two-sided cells grows as $e^{\sqrt{n}}$, whereas the number of 
subsets of roots of $P_W$ is $2^{n(n+1)/2}$. For example in type $\tilde{A}_1$, there is only one root of $P_W=\bq+1$,
but there are two two-sided cells (and both are singular at $\bq=-1$.) However, already in type $\tilde{A}_3$, one can see from 
Theorem \ref{theorem fd(1) integral} that the two-sided cell corresponding to the partition
$4=2+2$ is not singular at two of the roots of $P_{A_3}(\bq)=(1+\bq)(1+\bq+\bq^2)(1+\bq+\bq^2+\bq^3)$; see Example \ref{ex less singular cell}.
\subsubsection{Application: representations with parahoric-fixed vectors}
\label{subsubsection intro application parahoric}
In Section \ref{section parahoric-fixed}, we use the existence of the 
action of the asymptotic Hecke algebra on tempered $G$-representations to give a simple criterion for the 
existence of vectors fixed under a parahoric subgroup of $G$:
\theoremstyle{Theorem}
\newtheorem*{thm:parahoric}{Theorem \ref{thm existence of parahoric-fixed vectors}}
\begin{thm:parahoric}
Let $\pi=K(u,s,\rho)$ be a simple tempered $I$-spherical representation of 
$G$. Let $\mathcal{P}$ be a parahoric subgroup of $G$ and let $w_\mathcal{P}$ be the longest 
element in the corresponding subgroup of $\Waff$. Let $\Bb_u^\vee$ be the Springer 
fibre for $u$. 
\begin{enumerate}
\item 
If $\ell(w_\mathcal{P})>\dim\Bb_u^\vee$, then $\pi^\mathcal{P}=\{0\}$.
\item
Conversely, let $u_\Ppp$ be the unipotent conjugacy class corresponding to the two-sided cell
containing $w_{\Ppp}$. Then there exists a semisimple element $s\in Z_{G^\vee}(u_\Ppp)$, a Levi subgroup 
$M^\vee$ of $G^\vee$ 
minimal such that $(u_\Ppp,s)\in M^\vee$, and a discrete series representation $\omega\in\mathcal{E}_2(M)$ 
such that 
\[
\pi^\Ppp=i_{P_M}^G(\omega\otimes\nu)^{\Ppp}\neq \{0\}
\]
for all $\nu$ non-strictly positive and the parameter of $\pi$ is $(u_\Ppp,s)$.
\end{enumerate}
\end{thm:parahoric}
Thus starting from the regular unipotent class, $\pi(t_{w_\Ppp})=0$ until
reaching the unipotent attached to $w_\Ppp$. At this unipotent, $\Ppp$-fixed
vectors are first encountered, and $t_{w_\Ppp}$ acts by a nonzero projector 
with image contained in $\pi^{\Ppp}$. For lower cells, it may still be the case that $\pi^{\Ppp}\neq 0$, but $t_{w_\Ppp}$ will act by zero on such representations, too. 
Therefore the nonzero action of $t_{w_\Ppp}$ detects the precisely the ``most regular" unipotent attached
to $\Ppp$-spherical representations, in the sense that
if a representation $\pi$ such that $\pi^{\Ppp}\neq 0$ has the unipotent part of its
parameter equal to $u$, then $a(u)\geq a(u_\Ppp)$. In this way the distinguished 
involutions $t_{w_\Ppp}$ are more exact versions of the corresponding indicator 
functions $1_{\Ppp}$, at the expense of being more complicated to understand.
\begin{rem}
Recall from Section \ref{subsubsubection Denominators and the KL classification at roots of 1}
that for every simple $H$-module 
$M$ there is a number $a(M)$ such that $H^{\geq i}M=0$ for all $i>a(M)$. However, 
if $(u,s,\rho)$ is the $KL$-parameter of $M$, then 
without knowing that $M$ extends to a simple $J$-module, it does not follow
that $a(M)=\dim\Bb_u^\vee$.
\end{rem}
\begin{rem}
\label{rem quasiregular no parahoric}
By \cite[Theorem 4.8(d)]{affineIV} and the proof of \cite[Lemma 5.5]{affineIV}, every two-sided cell
contains a distinguished involution contained in a finite parabolic subgroup of $\Waff$, but not every
distinguished involution of a finite Coxeter group is the longest word of a parabolic subgroup, \textit{i.e.}
is of the form $w_{\Ppp}$; approximately half of two-sided cells of the finite Weyl group $W\subset\Waff$ 
do not contain any distinguished involutions
contained in proper parabolic subgroups, because of the cell-preserving bijection $w\mapsto w_0w$.
For example, this happens for the second-lowest cell for $E_8$.
\end{rem}
The existence of parahoric-fixed vectors is a rigid question, in the sense of the rigid cocentre of
Ciubotaru-He \cite{CH}. We investigate this connection further in forthcoming work.

Some time after completing the present paper, we became aware of \cite{GP}, which also studies the 
connection between the asymptotic Hecke algebra and the Plancherel theorem in type $\tilde{G}_2$, for
unequal parameters. In \textit{op. cit.} the authors speculate that the ``asymptotic Plancherel measure"
of \textit{op. cit.} should be related to the perspective of \cite{BK} on $J$. We defer investigation of
this to future work, but note that in light of both the classic work \cite{Morris} of Morris, and recent
work \cite{SolleveldProGen} of Solleveld, the unequal parameters case is relevant even to split $p$-adic 
groups. In particular, establishing results similar to those of the present paper for unequal parameters
may provide an effective way to study the algebra $\mathcal{J}$ of Braverman and Kazhdan given in Definition 
1.9 of \cite{BK}. 
\subsection{Outline of the argument}

This paper is organized according to our strategy for proving Theorems 
\ref{thm general G denominators}, Theorem \ref{thm conjecture is true GLn}, and Corollary  
\ref{cor BK map injective}.

These results
are each simple corollaries of computations with the Plancherel formula and some of Lusztig's results on $J$.
The remainder of this section will introduce $\HH$ and $J$ precisely, and recall their basic representation
theory. In Section \ref{section Plancherel}, we introduce Harish-Chandra's Plancherel formula in detail,
along with all the numerical constants that appear in it. In Section \ref{subsection algebra J as subalgebra of the Schwartz algebra}, we recall the results of Braverman-Kazhdan from \cite{BK}. There is no original 
material in the first two sections. In Section \ref{section reg trace}, we prepare to apply the 
Plancherel formula by proving that, if $f_w$ is the Schwartz function associated by Braverman-Kazhdan
to $t_w$, and $\pi$ is a tempered representation, then $\trace{\pi}{f_w}$ is sufficiently
regular so as not to complicate the denominators of $a_{x,w}$. This section is also mostly a recollection
of standard material, the only original result being Lemma \ref{Lemma trace is regular}.

In Section \ref{section proofs}, we prove most of our main results. As we are able to be more precise
in type $\tilde{A}_n$, we perform each step in parallel for type $\tilde{A}_n$ and for other types:
in Sections \ref{subsection the functions fw for GLn} and \ref{subsection the functions fw for general G}
we prove statements like those of Theorem \ref{thm general G denominators} for the Schwartz
functions $f_w$. In these sections $\bq$ is specialized to a prime power $q$. In 
Section \ref{subsection relating tw and fw} we relate the functions $f_w$ to the basis elements
$t_w$, turning statements that hold for all prime powers $q$ into statements that hold for 
the formal variable $\bq$. We are then able to prove our main results. 

In Section \ref{section parahoric-fixed}, we state our application about the existence of parahoric-fixed vectors.
\subsection{The affine Hecke algebra}
Let $F$ be a non-archimedean local field, $\Oo$ its ring
of integers and $\varpi$ be a uniformizer. Let $q$ be the cardinality of the residue field. Then
$q=p^r$ is a prime power. We write $|\cdot|_F$ for the $p$-adic absolute value on $F$; when necessary, 
$|\cdot|_\infty$ will denote the archimedean absolute value on $\C$.

Let $\G$ be a connected reductive algebraic group defined and split over $F$, $\Aa$ a maximal $F$-split torus of $\G$, and 
$X_*=X_*(\Aa)$ the cocharacter lattice of $\Aa$. Let $\pi_1(G)=X_*/\Z\Phi^\vee$ be the fundamental group, the quotient of the cocharacter lattice by the coroot lattice.
Let $\NN$ be unipotent radical of a chosen Borel subgroup $\B$, so that $\B=\Aa\NN$. Let $W$ be the finite Weyl group of $\G$, and $\Waff=W\ltimes X_*(\Aa)$ be 
the extended affine Weyl group. Write $S$ for the set of simple 
reflections in $\Waff$. Let $\mathbf{G}^\vee$ be the Langlands dual group of $\G$, taken over $\C$. We write $G=\G(F)$, $A=\Aa(F)$, etc. Where there is no danger of confusion, we also write
$\Gd$ for $\mathbf{G}^\vee(\C)$, 
$M^\vee$ for $\mathbf{M}^\vee(\C)$, etc. Let $K$ be the maximal 
compact subgroup $\G(\Oo)$. Also let $I$ be the Iwahori subgroup of $G$ that is the preimage of 
$\B(\F_q)$ in $K$. We sometimes write $P_{\G/\B}$ for $P_W$, as this polynomial is also the Poincar\'{e} polynomial of the flag variety $(\G/\B)(\C)$.

We write $\HH$ for the affine Hecke algebra of $\Waff$. It is a unital associative algebra over the ring 
$\mathcal{A}=\C[\bq^{\frac{1}{2}},\bq^{-\frac{1}{2}}]$ (in fact, it is defined over 
$\Z[\bq^{\frac{1}{2}},\bq^{-\frac{1}{2}}]$ but we will work over $\C$ to avoid having to introduce extra notation later), where $\bq^{\frac{1}{2}}$ is a formal variable. We will think of $\C^\times$ as 
$\Spec\mathcal{A}$. The algebra $\HH$ has the Coxeter presentation with standard basis 
$\{T_w\}_{w\in\tilde{W}}$ with $T_wT_{w'}=T_{ww'}$ if $\ell(ww')=\ell(w)+\ell(w')$ and quadratic relation 
$(T_s+1)(T_s-\bq)=0$ for $s\in S$. We write $\theta_\lambda$ for the generators of the Bernstein subalgebra.

Recall from \cite{KL79} the two Kazhdan-Lusztig bases $\{C_w\}_{w\in\Waff}$
and $\{C'_w\}_{w\in\Waff}$ of $\HH$, where 
\[
C'_w=\bq^{-\frac{\ell(w)}{2}}\sum_{x\leq w}P_{x,w}(\bq)T_x
\]
for the Kazhdan-Lusztig polynomials $P_{x,w}$. Write
%where
%%
%\[
%C_w'=\bq^{-\frac{\ell(w)}{2}}\sum_{y\leq w}P_{y,w}(\bq)T_y,
%\]
%%
%and
%%
%\[
%C_w=\bq^{\frac{\ell(w)}{2}}\sum_{y\leq w}(-1)^{\ell(w)-\ell(y)}\bq^{-\ell(y)}P_{y,w}(\bq^{-1})T_y,
%\]
%%
%and $P_{y,w}$ and the Kazhdan-Lusztig polynomials. 
$C_xC_y=\sum_{z\in \Waff}h_{x,y,z}C_z$.
The \emph{inverse Kazhdan-Lusztig polynomials} $Q_{y,x}$ are the unique family of polynomials
satisfying
\[
T_x=\sum_{y\leq x}(-1)^{\ell(x)-\ell(y)}\bq^{\frac{\ell(y)}{2}}Q_{y,x}(\bq)C'_y,
\]
or equivalently,  satisfying
\[
\sum_{z\leq y\leq x}(-1)^{\ell(x)-\ell(y)}Q_{y,x}(\bq)P_{z,y}(\bq)=\delta_{z,x}
\]
along with some restrictions on their degrees. For example, we shall use in Section 
\ref{subsubsection the functions fw and the basis elements tw} that 
$\deg Q_{y,x}\leq \frac{1}{2}(\ell(x)-\ell(y)-1)$. See \cite{BjornerBrenti} for further exposition.

If 
\[
\varphi\colon(\Waff, S)\to (\Waff, S)
\]
is a Coxeter group automorphism of $\Waff$, then 
\[
T_w\mapsto T_{\varphi(w)}
\]
is an algebra automorphism of $\HH$ commuting with the bar involution, and therefore given equivalently by
\[
C'_w\mapsto C'_{\varphi(w)}
\]
and
\begin{equation}
\label{eqn Cox gp aut on KL basis}
C_w\mapsto C_{\varphi(w)}.
\end{equation}

It is well-known that there is an isomorphism of associative $\C$-algebras 
\[
\HH|_{\bq=q}:=\HH\otimes_\mathcal{A}\C\to C_c^\infty(G)^{I\times I}=:H,
\]
where $\bq$ acts on $\C$ by multiplication by $q$.
\subsection{The asymptotic Hecke algebra}
\begin{dfn}
\label{dfn a-function}
\emph{Lusztig's a-function} $a\colon\tilde{W}\to\Z_{\geq 0}$ is defined such that $a(w)$ is the minimal value such that 
$\bq^{\frac{a(w)}{2}}h_{x,y,w}\in\mathcal{A}^+=\C[\bq^{1/2}]$ for all $x,y\in\tilde{W}$.
\end{dfn}
The $a$-function is constant of two-sided cells of $\Waff$. Obviously, $a(1)=0$,
and the $a$-function obtains its maximum, equal to the number of positive roots, on the two-sided cell 
containing the longest word $w_0\in W$. In general, under the bijection between two-sided cells $\cc$ of 
$\Waff$ and unipotent conjugacy classes $u=u(\cc)$ in $\Gd$ of \cite{affineIV}, we have 
\[
a(\cc)=\dim_\C(\Bb_u^\vee),
\]
where $\Bb_u^\vee$ is the Springer fibre of $u$. We have $a(w)\leq \ell(w)$ for all $w\in\Waff$.

In \cite{affineII}, Lusztig defined an associative algebra
$J$ over $\C$ equipped with an injection $\phi\colon \HH\into J\otimes_\C\mathcal{A}$
which becomes an isomorphism after taking a certain completion, to be recalled in Section 
\ref{subsubsection completions of HH and JotimesZA},
of both sides. As a $\C$-vector space, $J$ has basis $\{t_w\}
_{w\in \Waff}$. The structure constants of $J$ are obtained
from those of $\HH$ written in the $\{C_w\}_{w\in \Waff}$-basis 
under the following procedure: first, the integer $\gamma_{x,y,z}$  is defined by the 
condition
\[
\bq^{\frac{a(z)}{2}}h_{x,y,z^{-1}}-\gamma_{x,y,z}\in \bq\mathcal{A}^+.
\]
One then defines
\[
t_xt_y=\sum_{z\in\Waff}\gamma_{x,y,z}t_{z^{-1}}.
\]
Surprisingly, this defines a unital associative algebra with unit
\[
1_J=\sum_{d\in\mathcal{D}}t_d,
\]
where $\mathcal{D}$ is the (finite) set of \emph{distinguished involutions}
\cite{affineII}. The elements $t_d$ are orthogonal idempotents in $J$, which
decomposes as a direct sum indexed 
by the two-sided cells of $\Waff$ in the sense of \cite{affineI}. Each left cell, again in the sense of \textit{op. cit.}, contains
a single distinguished involution which is the unit in the ring $t_dJt_d$.
If $\cc$ is a two-sided cell, then $J_{\cc}$ is a 
is a unital ring with unit 
\[
1_{J_{\cc}}=\sum_{d\in\mathcal{D}\cap\cc}t_d.
\]

Lusztig further defined a map of algebras
\[
\phi\colon\HH\to J\otimes_{\C}\mathcal{A}
\]
given by
\[
\phi(C_w)=\sum_{z\in W,~d\in\mathcal{D},~a(z)=a(d)}h_{w,d,z}t_z.
\]
Write $\phi_q$ for the specialization of this map when $\bq=q$.
It is known \cite[Proposition 1.7]{affineIII} that $\phi_q$ is injective for all $q\in\C^\times$.
\begin{lem}
\label{lem based ring auts linearity}
Let 
\[
\varphi\colon(\Waff, S)\to (\Waff, S)
\]
be a Coxeter group automorphism. 
Then 
\begin{enumerate}
\item 
The map $t_w\mapsto t_{\varphi(w)}$ defines a  based ring automorphism of $J$, which we also denote $\varphi$.
\item
The map $\phi$ is $\varphi$-linear, in the sense that it commutes with the automorphism from the first statement and the automorphism \eqref{eqn Cox gp aut on KL basis}.
\end{enumerate}
\end{lem}
\begin{proof}
The first statement is \cite[2.2(g)]{LX}. 

For the second statement, note that 
$\varphi$ acts on $\mathcal{D}$ and that 
\[
\phi(C_{\varphi(w)})=\sum_{d_1\sim_L z_1}h_{\varphi(w),d_1,z_1}t_{z_1}
\]
and
\[
\varphi\left(\phi(C_w)\right)=\sum_{d_2\sim_Lz_2}h_{w,d_2,z_2}t_{\varphi(z_2)}.
\]
Now if $z_1=\varphi(z_2)$, then $d_1=\varphi(d_2)$, and
\[
h_{\varphi(w), d_1, z_1}=h_{\varphi(w), \varphi(d_2), \varphi(d_2)}=h_{w, d_2, z_2}
\]
by \eqref{eqn Cox gp aut on KL basis}.
\end{proof}
\subsubsection{Deformations of the group ring}
Upon setting $\bq=1$, $\HH|_{\bq=1}$ is isomorphic to $\C[\tilde{W}]$, and so $\HH$ is a deformation of
the group algebra of $\Waff$.

Let, temporarily, $W$ be any finite Coxeter group. Then one can define its Hecke algebra $\HH$, an algebra over 
$\Z[\bq^{\frac{1}{2}},\bq^{-\frac{1}{2}}]$ which deforms the group ring $\Z[W]$. Let $q\in\C^\times$. For 
all but finitely-many values of $q$, all roots of unity, the algebras $\HH_{\bq=q}$
are trivial deformations of $\C[W]$, and hence are all isomorphic. 
However, this isomorphism requires choosing a square root of $q$. The affine Hecke algebra provides a canonical 
isomorphism: away from finitely-many $q$, we have that $\HH|_{\bq=q}$ 
is isomorphic to $J$, and $J$ is defined over $\Z$ (although, as stated above, we will view it as a $\C$-
algebra to unburden notation), see \cite[Section 20.1 (e)]{LusztigBook}.
\begin{ex}
Let $W=\genrel{1,s}{s^2=1}$ be the Weyl group of type $A_1$. The Kazhdan-Lusztig $C_w$-basis elements
are $C_1=T_1$ and $C_s=\bq^{-\frac{1}{2}}T_s-\bq^{\frac{1}{2}}T_1$, and $\mathcal{D}=W$ in this case. There are two two-sided cells in $W$,
and one can easily check that 
\[
\phi\colon C_1\mapsto t_1+t_s
\]
and
\[
\phi\colon C_s\mapsto -\left(\bq^{\frac{1}{2}}+\bq^{-\frac{1}{2}}\right)t_s.
\]
Specializing $\bq=q$, we see that $\phi$ becomes an isomorphism whenever 
$\left(q^{\frac{1}{2}}+q^{-\frac{1}{2}}\right)\neq 0$, that is, whenever $q\neq -1$.
\end{ex}
\subsection{Representation theory of $\HH$ and $J$}
Recall the classification of finite-dimensional $\HH$-modules given in \cite{KLDeligneLanglands}. For an 
extended exposition with slightly 
different conventions, we refer the reader to \cite{CG}. The primary difference between the setup we require and that of \cite{CG} is that we must be able to defer specializing $\bq$ until the last possible moment, whereas specializing $\bq$ is the first step of the construction as given in \cite{CG}. In particular, let 
$u\in\mathbf{G}^\vee(\C)$ be a unipotent element, and $s\in\mathbf{G}^\vee(\C)$ be a semisimple element such that $us=su$.
Let $\rho$ be an irreducible representation of the simultaneous centralizer $Z_{\Gd}(s,u)$. 
The \emph{standard} $H$-\emph{module} $K(s,u,\rho)$ is a certain, and in general reducible, $H$-module
defined using the geometry of the flag variety of the Langlands dual group. It may be the zero module;
we say that $(u,s,\rho)$ is \emph{admissible} when this does not happen. Having fixed $s$ and $u$,
we say that $\rho$ is admissible if $(u,s,\rho)$ is.

We now recall an algebraic version of the Langlands classification.
As we do not have access to the notion of absolute value of $\bq$, the classical definitions 
of tempered and discrete-series representations of the corresponding $p$-adic group $G$ are not available to us. However, Kazhdan-Lusztig
provide the following algebraic generalization. Let $\nbk=\C(\bq^{-\frac{1}{2}})$ and $\bk$
be the algebraic closure of $\nbk$. We write $\HH_{\bk}$ for $\HH\otimes_\mathcal{A}\bk$ and recall another definition of Lusztig's from \cite{affineIV}. 
\begin{dfn}
Let $M$ be a $H_{\bk}$-module finite-dimensional over $\bk$. Say that $m\in M$ is an \emph{eigenvector} if $\theta_x\cdot m=\chi_m(x)m$
for all dominant $x$ in $X_*$. As $\chi_m$ is a character of the coweight lattice, it corresponds to 
an element $\sigma_m\in \mathbf{A}^\vee(\bk)$ in the sense that, for all cocharacters $x$ of $A$, we have
\[
\chi_m(x)=x(\sigma_m)
\]
where $x$ is viewed as a character of $\mathbf{A}^\vee$. Then $M$ is of \emph{constant type} if 
there is a semisimple element $s'\in\mathbf{G}^\vee(\C)$ and a morphism of algebraic groups
\[
\phi'\colon\SL_2(\C)\to Z_{\Gd}^0(s')
\]
such that for all eigenvectors $m$ of $M$, the element $\sigma_m$ is $\mathbf{G}^\vee(\bk)$-conjugate to 
\[
\phi'(\diag(\bq^{1/2},\bq^{-1/2}))s',
\]
where by abuse of notation we have written $\phi'$ again for the base-change to $\bk$.
\end{dfn}
The idea of the name of the definition is that $s'\in\mathbf{G}^\vee(\C)$ is a ``constant element" not 
depending on $\bq$.

Next is a generalization of Casselman's criterion, which as such, 
requires a choice of dominant weights. Following \cite{KLDeligneLanglands} (see the proof of Prop. 1.6 of \textit{loc. cit.}) and \cite[Section 1.6]{affineIV}, we choose the positive roots to be those occurring in $\g/\bB$.

%The definitions in both \cite{KLDeligneLanglands} and \cite{affineIV} are both given under the choice of positive roots to be those associated to the 
%opposite Borel (the so-called geometric choice of dominance). We keep this convention in the present paper.
%%On the other hand, for purposes of harmonic analysis, Casselman's criterion as usually stated uses the standard choice of positive roots.

Following \cite{affineIV}, we choose a morphism of groups
$V\colon\bk^\times\to\R$ such that $V(\bq^{\frac{1}{2}})=1$ and $V(a\bq^{\frac{1}{2}}+b)=0$
for all $a\in \C$, $b\in \C^\times$.
\begin{dfn}[\cite{affineIV}, c.f. \cite{KLDeligneLanglands}]
Let $M$ be any finite-dimensional $\HH_{\bk}$-module. We say that $M$ is $V$-\emph{tempered} 
if all eigenvalues $\nu$ of $\theta_\lambda$ for all dominant $\lambda\in X_*(\Aa)$ 
satisfy $V(\nu)\leq 0$. 
\end{dfn}
The representation theory of $J$ is very well understood. We shall recall some notation and then 
state some major classification results of Lusztig, which relate the representation theory of $J$ to 
certain $\HH$-modules defined by Kazhdan-Lusztig.
\begin{dfn}
Let $E$ be a $J$-module. Then $E\otimes_\C\nbk$ is a $J\otimes_\C\nbk$-module. Hence
$\HH_\nbk$ acts on $E$ via $\phi$. Denote the resulting $\HH_\nbk$ module by ${}^\phi E$.
\end{dfn}
\subsubsection{Involutions on $\HH$}
\label{subsubsection Involutions on HH}
For $x\in\Waff$, let $\omega(x)\in\pi_1(G)$ label the $W\ltimes\Z\Phi^\vee$-coset of $\Waff$ containing $x$, and write $\omega(x)=\omega(x)_f\omega(x)_t\in W\ltimes X_*$. Then every $y\leq x$ is also in the coset of $\omega(x)$.

\begin{dfn}
Let $j\colon\HH\to\HH$ be the ring (and not $\mathcal{A}$-algebra) involution of $\HH$ defined
by $j(\sum_wa_wT_w)=\sum_w\bar{a}_w(-1)^{\ell(w)}\bq^{-\ell(w)}T_w$.
\end{dfn}
The $j$-involution exchanges the $\{C_w\}$ and $\{C_w'\}$-bases
\cite{KL79}.

The reason for our choice of conventions, which differ slightly from those of 
\cite{affineIII} and \cite{affineIV}, is the presence of the involution $\dagger$ and its exchange of temperedness and anti-temperedness in the relationship between $H$-modules and $J$-modules; see Theorem \ref{J summary theorem}
and Lemma \ref{lem Goldman involution exchanges KL bases} below. 
\begin{dfn}
\label{dfn Goldman involution}
\begin{enumerate}
\item[(a)]
Define the $\mathcal{A}$-algebra involution ${}^\dagger(-)$ of $\HH$
%The \emph{Goldman involution} $h\mapsto h^\dagger$ is the $\mathcal{A}$-algebra involution of $\HH$ defined 
by setting
\[
{}^\dagger T_w=(-1)^{\ell(w_f)}\bq^{\ell(w)}T_{w^{-1}}^{-1},~~~w=w_f\lambda\in\Waff.
\]
Note that the sign factor depends only on $w_f$.
\item[(b)]
Let $h\mapsto {}^*h$ be the $\mathcal{A}$-involution defined in terms of the Bernstein presentation of $\HH$: by 
\[
{}^*T_w=(-1)^{\ell(w)}\bq^{\ell(w)}T_{w^{-1}}^{-1},~~~ w\in W
\]
and
\[
{}^*\theta_\lambda=\theta_{\lambda}^{-1},~~~\lambda\in X_*. 
\]
\item[(c)] (\cite[Section 5.a]{PrasadDuke}.)
Let $\PrInv$ be the $\mathcal{A}$-linear involution defined by 
\[
\PrInv(T_w)=T_{\PrInv(w)}
\]
induced by the Coxeter group automorphism given by 
$\PrInv(s)=w_0sw_0$ for $s\in S\setminus\sett{s_0}$ and $\PrInv(s_0)=s_0$, so that $\bar{\kappa}(\lambda)=-w_0(\lambda)$ for $\lambda\in X_*$; equivalently
\[
\PrInv(T_w)=T_{w_0ww_0^{-1}},~~~ w\in W,~~~
\PrInv(\theta_\lambda)=\theta_{-w_0(\lambda)},~~~\lambda\in X_*.
\]
\item[(d)] (\cite[Section 5]{Barbasch-Moy}.)
Let $h\mapsto {}^\bullet h$ be the $\mathcal{A}$-linear anti-involution defined by 
\[
{}^\bullet T_w=T_{w^{-1}},~~~w\in W
,~~~
{}^\bullet\theta_\lambda=\theta_\lambda,~~\lambda\in X_*.
\]
\item[(e)] (\cite[Section 5]{Barbasch-Moy}.)
Let $h\mapsto {}^\star h$ be the $\mathcal{A}$-linear anti-involution defined by 
\[
{}^\star T_w=T_{w^{-1}},~~~w\in W
,~~~
{}^\star\theta_\lambda=T_{w_0}\theta_{\PrInv(\lambda)}T_{w_0}^{-1},~~\lambda\in X_*.
\]
\end{enumerate}
\end{dfn}
By \cite[Prop. 2.9]{OpdamSpectral}, ${}^\star T_w=T_{w^{-1}}$ for $w\in \Waff$.
When $G$ is simply-connected, the involution $\dagger$ agrees with the \emph{Goldman involution} of $\HH$.
Now we have
\begin{lem}
\label{lem Goldman involution exchanges KL bases}
We have
\begin{enumerate}
\item[(a)] 
%%
%\[
%\overline{{}^\dagger h}={}^\dagger\left(\bar{h}\right)=j(h)
%\]
%%
%as $\mathcal{A}$-antilinear involutions. In particular, 

We have ${}^\dagger C_x=(-1)^{\ell(\omega(x)_f)}j(C_w)=(-1)^{\ell(x)+\ell(\omega(x)_f)}C'_x$ for all $x\in\Waff$.
\item[(b)]
We have ${}^\dagger\theta_\lambda= T_{w_0}\theta_{-\PrInv(\lambda)}T_{w_0}^{-1}=T_{w_0}\theta_{\PrInv(\lambda)}^{-1}T_{w_0}^{-1}$.
\item[(c)]
%Both ${}^\bullet(-)$ and ${}^\star(-)$ commute with ${}^\dagger(-)$, and 
We have ${}^\bullet{}^\star{}^\dagger (-)={}^*(-)$ as involutions on $\HH$.

%\item[(d)]
%We have ${}^*M=\D_{\mathrm{Coh}}(M)$ for all semisimple $H$-modules $M$.
%
\item[(d)]
We have ${}^\bullet(-) \circ{}^\star(-)=T_{w_0}^{-1}\PrInv(-)T_{w_0}$ as automorphisms of $\HH$.
%\item[(f)]
%In the notation of \cite{PrasadDuke}[Section 2.b], ${}^\bullet(-){}^\star(-)=\kappa(-)$ for $\bq=1$ as automorphisms of $\Waff$.
\item[(e)]
We have the commutative diagram
\[
\begin{tikzcd}
\HH\arrow[d, "{}^*(-)"{swap}]\arrow[r, "{}^\dagger(-)"]&\HH\arrow[d, "T_{w_0}^{-1}\PrInv(-)T_{w_0}"]\arrow[rr, "\phi"]&&J\otimes\mathcal{A}\arrow[d, "\phi(T_{w_0})^{-1}\PrInv(-)\phi(T_{w_0})"]\\
\HH\arrow[r, "\id"]&\HH\arrow[rr, "\phi"]&&J\otimes\mathcal{A}.
\end{tikzcd}
\]
\end{enumerate}
\end{lem}
\begin{proof}
By definition, if $x=\omega x'$ for $x'\in W\ltimes\Z\Phi^\vee$ and $\omega\in\pi_1(G)$, then $C'_x=T_\omega C'_{x'}$ and $C_x=T_\omega C_{x'}$. Further, if $\omega=\omega_f\omega_t\in\Waff$ has $\ell(\omega)=0$, then 
${}^\dagger T_\omega=(-1)^{\ell(\omega_f)}T_\omega$. Therefore it suffices to show that 
\[
\overline{{}^\dagger h}={}^\dagger\left(\bar{h}\right)=j(h)
\]
as $\mathcal{A}$-antilinear automorphisms of $\HH$ for $G$ simply connected. In this case, $\ell(\lambda)\in 2\Z$ for any dominant $\lambda\in X_*$, whence ${}^\dagger(-)$ agrees with the involution $T_w\mapsto(-1)^{\ell(w)}\bq^{\ell(w)}T_{w^{-1}}^{-1}$, $w\in W\ltimes\Z\Phi^\vee$. Therefore we compute 
\[
\overline{\sum_{x}b_x{}^\dagger T_x}=\sum_{x}\bar{b}_x(-1)^{\ell(x)}\overline{\bq^{\ell(x)}\bar{T}_x}
=\sum_{x}\bar{b}_x(-1)^{\ell(x)}\bq^{-\ell(x)}T_x=j\left(\sum_{x}b_xT_x\right)
\]
whereas
\[
{}^\dagger\left(\overline{\sum_{x}b_xT_x}\right)={}^\dagger\left(\sum_{x}\bar{b}_x(-1)^{\ell(x)}\bq^{-\ell(x)}
{}^\dagger T_x\right).
\]
Thus we have $\overline{^\dagger C_w}=(-1)^{\ell(w)}C'_w$ for $G$ simply-connected, whence (a).

It suffices to prove (b) for $\lambda$ dominant, in which case
%\todo[inline]{fourth equality conjugation seems to follow from (e) of hte involution definitions, and based ring aut. preserves length. What else is there to comment about?}
%%
%\[
%{}^\dagger\theta_\lambda=\bq^{-\frac{\ell(\lambda)}{2}}{}^\dagger T_\lambda
%=
%(-1)^{\ell(\lambda)}\bq^{\frac{\ell(\lambda)}{2}}T_{-\lambda}^{-1}
%=
%\bq^{\frac{\ell(\lambda)}{2}}T_{w_0(\PrInv(\lambda))}^{-1}
%=
%\bq^{\frac{\ell(\PrInv(\lambda))}{2}}T_{w_0}T_{\PrInv(\lambda)}^{-1}T_{w_0}^{-1}
%=
%T_{w_0}\theta_{-\PrInv(\lambda)}T_{w_0}^{-1},
%\]
%%
%
\[
{}^\dagger\theta_\lambda=\bq^{-\frac{\ell(\lambda)}{2}}{}^\dagger T_\lambda
=
\bq^{\frac{\ell(\lambda)}{2}}T_{-\lambda}^{-1}
=
\bq^{\frac{\ell(\lambda)}{2}}\left({}^\star T_{\lambda}\right)^{-1}
=
\left({}^\star \theta_{\lambda}\right)^{-1}
=
\left(T_{w_0}\theta_{\PrInv(\lambda)}T_{w_0}^{-1}\right)^{-1}
=
T_{w_0}\theta_{\PrInv(\lambda)}^{-1}T_{w_0}^{-1},
\]
where we used the equivalence of the definitions in Definition \ref{dfn Goldman involution} (e). This shows (b).

For (c), we again compute, for $\lambda$ dominant, that
\[
{}^\bullet{}^\star{}^\dagger\theta_\lambda
={}^\bullet{}^\star \left(T_{w_0}\theta_{-\PrInv(\lambda)}T_{w_0}^{-1}\right)
={}^\bullet\theta_{-\lambda}=\theta_{-\lambda}.
\]
Agreement on $T_w$ for $w\in W$ follows from the fact that ${}^{\bullet\star}(-)$ is the identity on the finite Hecke algebra.
%
%\[
%{}^\bullet{}^\star{}^\dagger T_w={}^\bullet {}^\star (-1)^{\ell(w)}\bq^{\ell(w)} T_{w^{-1}}^{-1}={}^\bullet(-1)^{\ell(w)}\bq^{\ell(w)} T_{w}^{-1}
%=
%(-1)^{\ell(w)}\bq^{\ell(w)} T_{w^{-1}}^{-1}.
%\]
%
This shows (c).

On the Bernstein subalgebra, part (d) follows from the definitions. On the finite Hecke algebra, we must show that the right hand side is the identity automorphism. For $s$ a finite reflection, write $w_0sw_0=\PrInv(s)$, again a finite reflection, so that $\PrInv(s)w_0s=w_0$, and 
\[
T_{w_0}T_sT_{w_0}^{-1}=T_{w_0}T_s T_s^{-1}T_{w_0s}^{-1}=T_{w_0}T_{w_0s}^{-1}=T_{\PrInv(s)}T_{w_0s}T_{w_0s}^{-1}=T_{\PrInv(s)}.
\]
Therefore on the finite Hecke algebra, $\PrInv$ is given by conjugation by $T_{w_0}$.

Commutativity of the right square in (e) follows from Lemma \ref{lem based ring auts linearity}, and commutativity of the left square follows from (c) and (d).

%To show (d), it suffices by \eqref{eqn cohomological AZ relation} and (c) to show that ${}^\bullet{}^\star\pi\simeq\tilde{\pi}$ for simple $\pi$. We may largely argue as in the proof of \cite{PrasadDuke}[Prop. 6.3]: According to \cite{EvensMirkovic}[Theorem 5.5], we must show that the weights of the Bernstein elements $\theta_\lambda$ on ${}^\bullet{}^\star\pi$ are equal to the weights of $\tilde{\pi}$. 
%
%Let $\lambda\in X_*$,  $t\in T^\vee$ be a weight of $\pi$, and $v\in\pi$ be such that $\theta_\lambda v=\lambda(t)v$. We claim that $T_{w_0}^{-1}v$ is a weight vector with weight $w_0(t^{-1})$ in ${}^\bullet{}^\star\pi$. Indeed, we have
%%
%\[
%({}^\bullet{}^\star\theta_\lambda )T_{w_0}^{-1}v={}^\bullet\left(T_{w_0}\theta_{-w_0(\lambda)}T_{w_0}^{-1}\right)T_{w_0}^{-1}v=T_{w_0}^{-1}\theta_{-w_0(\lambda)}T_{w_0}T_{w_0}^{-1}v=\lambda(w_0(t^{-1}))T_{w_0}^{-1}v.
%\]
%%
%On the other hand, the weights of $\tilde{\pi}$ are exactly of the form $w_0(t^{-1})$, again by \textit{loc. cit.}
\end{proof}
Given an $\HH$ (or $\HH_{\nbk}$)-module $M$, define ${}^\dagger M$ to be the same vector space with the $\HH$-action twisted by this involution, and likewise for other automorphisms.
\begin{cor}
Twisting by ${}^\dagger(-)$ exchanges tempered and anti-tempered $H$-modules.
\end{cor}
\begin{proof}
Immediate from Lemma \ref{lem Goldman involution exchanges KL bases} (b).
\end{proof}
In fact, more is true: by \cite[Thm. 2]{Kato}, ${}^\dagger M$ is the Aubert-Zelevinski dual of $M$.
\begin{rem}
There is another natural auto-equivalence of the category of admissible representations of $G$ which exchanges tempered and anti-tempered representations, namely Bernstein's cohomlogical duality. This functor differs from Aubert-Zelevinksi duality by the contragredient \cite{SchneiderStuhler} (c.f. \cite{PrasadNori}).
On semisimple $H$-modules $\pi$, we have ${}^{\PrInv}\pi=\tilde{\pi}$, by \cite[Prop. 6.3]{PrasadDuke}. Therefore twist by the involution ${}^*(-)$, which manifestly exchanges tempered and anti-tempered representations, induces on semisimple modules the cohmological duality. However, it does not do so in general, because $\PrInv$ does not induce the contragredient in general. Indeed, for $G=\SL_2(F)$, $\PrInv=\id$ but not all non-unitary principal series of $G$ are self-dual. Therefore the operation ${}^*(-)$ is less natural than ${}^\dagger(-)$; this is perhaps reflected by the fact the formulas in Theorem \ref{thm general G denominators} are nicer than those for
$(\phi\circ{}^*(-))^{-1}(t_w)$, which are related to those of the Theorem by Lemma \ref{lem Goldman involution exchanges KL bases} (e).
%
%The operations of Aubert-Zelevinski duality and cohomological duality are equally natural, and differ of course by a contragredient. However, the involutions ${}^\dagger(-)$ and ${}^*(-)$ differ by a contragredient only on semisimple modules. For example, $\bar{\kappa}=\id$ for $\SL_2$, and not every non-unitary unramified principal series of $\SL_2$ is self-dual. Therefore the twist by ${}^*(-)$ is not quite equal to cohomological duality, and so $(-)^\dagger$ is slightly more natural. This may be reflected in the nicer formula \eqref{nice} versus \eqref{naughty}. (For what it's worth, Lusztig also prefers to twist instead by $\dagger$ in his book \cite{LusztigBook}[\S3.5, \S 18.8].)
%
%Although the involution $(-)^*$ manifestly exchanges tempered and anti-tempered modules, the preceding sentence shows that it is induces cohomological duality only on semisimple modules, and is therefore
\end{rem}

We now summarize the relationship between representations of $\HH$ and of $J$. 
\begin{theorem}[\cite{affineIV}, Prop. 2.11, Thm. 4.2, Prop. 4.4]
\label{J summary theorem}
There are bijections of sets
\begin{center}
\begin{tikzcd}
(u,s,\rho)\arrow[d, mapsto]
&
\sets{(u,s,\rho)}{\rho~\text{admissible},~us=su}/\mathbf{G}^\vee(\C)\arrow[d]
\\
{}^* K(s,u,\rho)\otimes_{\mathcal{A}}\nbk\arrow[d, equal] 
&
\left\lbrace M\in \HH_{\nbk}-\Mod\mid {}^*M\otimes_\nbk \bk~\text{simple,}~V-\text{tempered}
\right.
\\
{}^{\phi}E=E\otimes_{\C}\nbk\in \HH_{\nbk}-\Mod 
& \left.\HH_{\bk}- \text{module of constant type} \right \rbrace
\\
E\arrow[u, mapsto]&\sets{E\in J-\Mod}{E~\text{is simple}}\arrow[u],
\end{tikzcd}
\end{center}
where $K(u,s,\rho)$ is a standard module as in \cite{KLDeligneLanglands}. Moreover, for a simple $J$-module $E$,
\begin{enumerate}
\item 
$E$ is finite-dimensional over $\C$;
\item
There is a unique two-sided cell $\cc=\cc(E)$ of $\Waff$ such that $\trace{E}{t_w}\neq 0$
implies $w\in\cc$.
\item
$\trace{E}{t_w}$ is the constant term of the polynomial
\[
(-\bq^{1/2})^{a(\cc(E))}\trace{M}{C_w}\in\C[\bq^{\frac{1}{2}}]
\]
where $M\simeq {}^\phi E$.
\end{enumerate}
\end{theorem}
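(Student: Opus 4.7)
The plan is to establish each piece of the theorem following Lusztig's strategy in \cite{affineIV}, splitting the claim into three logically distinct parts: (a) the cell-support and finite-dimensionality of simple $J$-modules, (b) the trace identity, and (c) the bijection with admissible triples.

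For part (a), I would exploit the decomposition $J=\bigoplus_{\cc} J_{\cc}$ into two-sided cells, where each $J_{\cc}$ is a unital subring with unit $1_{J_{\cc}}=\sum_{d\in\mathcal{D}\cap\cc}t_d$. These cell-units are orthogonal idempotents summing to $1_J$, so any simple $J$-module $E$ is killed by all but one $1_{J_{\cc}}$; this gives the distinguished cell $\cc(E)$ and immediately forces $\trace{E}{t_w}=0$ for $w\notin\cc(E)$. Finite-dimensionality over $\C$ reduces to checking that each $J_{\cc}$ has only finite-dimensional simples, which follows from Lusztig's identification of $J_{\cc}$ (up to a Morita-type equivalence) with a convolution algebra on pairs of components in the Springer fibre $\Bb_{u(\cc)}^\vee\times\Bb_{u(\cc)}^\vee$, an algebra that is finitely generated over a finite-type center.

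For part (b), I would start from the defining relation $\bq^{a(z)/2}h_{x,y,z^{-1}}-\gamma_{x,y,z}\in\bq\mathcal{A}^+$ together with the formula $\phi(C_w)=\sum_{d,z}h_{w,d,z}t_z$, restricted by $a(z)=a(d)$. On a simple $J$-module $E$ supported on $\cc(E)$, only $z\in\cc(E)$ contribute, so $a(z)$ is constantly $a(\cc(E))$ throughout. Rewriting $\trace{{}^\phi E}{C_w}=\sum_z h_{w,d,z}\cdot\trace{E}{t_z}$ and scaling by $(-\bq^{1/2})^{a(\cc(E))}$, the $\bq$-divisibility of the error term in the $\gamma$-relation forces the resulting expression to be a polynomial in $\bq^{1/2}$ whose constant term matches $\trace{E}{t_w}$ up to the sign $(-1)^{a(\cc(E))}$, which is exactly the asserted formula.

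For part (c), I would construct the bijection in both directions. Given an admissible triple $(u,s,\rho)$, one extracts a simple $J$-module $E$ from $K(s,u,\rho)\otimes_{\mathcal{A}}\kappa$ as the leading $a$-layer: the $C_w$-action with $a(w)=a(\cc)$ factors through $J$ on a canonical simple quotient. Conversely, starting from a simple $E$, the module ${}^\phi E$ is simple over $\HH_{\kappa}$, and its Bernstein-weight structure can be read from $\phi(\theta_\lambda)=\sum h_{\theta_\lambda,d,z}t_z$: the leading-coefficient constraints forced by the $\gamma$-relation yield both the constant-type condition (so that the semisimple parameter factors as $\phi'(\diag(\bq^{1/2},\bq^{-1/2}))s'$ with $s'$ independent of $\bq$) and the $V$-geometric anti-temperedness of the eigenvalues. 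The main obstacle will be this last verification: the temperedness and constant-type conditions concern the Bernstein presentation, whereas $\phi$ is defined via the Kazhdan-Lusztig basis, so bridging the two demands a careful analysis of which $t_z$ can appear in $\phi(\theta_\lambda)$ and a control of the $\bq$-valuations of their coefficients. This is precisely where Lusztig's deeper machinery, involving induction from parabolic subalgebras and a link to the Langlands classification of tempered modules, enters, and any honest reproof would have to replicate that analysis.
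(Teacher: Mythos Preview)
The paper does not prove this theorem; it is stated with attribution to Lusztig \cite{affineIV} (Prop.~2.11, Thm.~4.2, Prop.~4.4) and used purely as background for the rest of the argument. There is therefore no proof in the paper to compare your proposal against.

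That said, your outline is broadly faithful to Lusztig's strategy in \cite{affineIV}. A couple of details would need attention in a full write-up. In part (b), the defining relation for $\gamma_{x,y,z}$ involves $h_{x,y,z^{-1}}$ rather than $h_{x,y,z}$, so matching $\trace{E}{t_w}$ to a specific coefficient requires either an inversion identity or the known symmetry properties of the $h$'s; and the sign $(-1)^{a(\cc(E))}$ comes from the fact that the structure constants are naturally polynomials in $-\bq^{1/2}$, which you gesture at but do not pin down. In part (c), you correctly identify the real difficulty: verifying constant type and $V$-anti-temperedness for ${}^\phi E$ is the heart of Lusztig's argument and goes well beyond formal manipulation of $\phi(\theta_\lambda)$, requiring his reduction to parabolic subalgebras and the geometric input from \cite{KLDeligneLanglands}. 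Your honest flagging of this as the main obstacle is appropriate.
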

In particular, $\trace{E}{t_w}$ is independent of $\bq$, and upon specializing 
$\bq=q$ a prime power, will be a regular function in the twisting character in the setting of the Payley-Wiener theorem for 
the Iwahori-Hecke algebra of the $p$-adic group $G$, as we will explain in greater detail below.

%In \cite{affineIV}, the involution ${}^*(-)$ is used in place of ${}^\dagger(-)$. By Lemma \ref{lem Goldman involution exchanges KL bases} (d), this makes no difference when it comes to simple tempered or simple anti-tempered modules. 
%That is, in our conventions, standard $\HH|_{\bq=q}$-modules in the sense of harmonic analysis extend to simple $J$-modules via $\phi\circ(-)^*$, but simple tempered $\HH_{\bq=q}$-modules extend to isomorphic simple $J$-modules also via 
%$\phi\circ(-)^\dagger$.
 
We will comment even further in Section \ref{subsection algebra J as subalgebra of the Schwartz algebra} on the necessity of twisting by some $\HH$-involution exchanging tempered and anti-tempered modules, and how it is sufficient to twist $\phi$ by either $(-)^*$ or $(-)^\dagger$, but that the latter twist leads to nicer formulas.
%
% $(-)^*$ in Section \ref{subsection algebra J as subalgebra of the Schwartz algebra}, and the circumstances in which $(-)^\dagger$ can be used instead. As ${}^\dagger(-)$ leads to more convenient formulas, and as Aubert duality is the more fundamental operation, we use it below whenever possible.
%
\section{Harish-Chandra's Plancherel formula}
\label{section Plancherel}
We recall the notation and classical results we will need about the Plancherel formula. For Iwahori-biinvariant Schwartz functions, the Plancherel formula is known explicitly
for all connected reductive groups, and is due to Opdam in \cite{OpdamSpectral}. In the case of $G=\GL_n(F)$, we shall refer
instead to \cite{AubPlym} (where in fact the Plancherel formula is computed explicitly in its entirety for 
$\GL_n$). In the case $\G=\Sp_4$, we shall refer to the unpublished work \cite{AKSp4} of Aubert and Kim.
For $\G=G_2$, we will refer to Parkinson \cite{Parkinson}.

In this section $q$ is a prime power (or at least a real number of absolute value strictly greater 
than $1$). The formal variable $\bq$ will not appear in this section. 
\subsection{Tempered and discrete series representations}
We take our definitions of discrete series and tempered representations from \cite[III.1]{Waldspurger} and \cite[III.2]{Waldspurger} respectively. By parabolic induction we always understand normalized induction.

\begin{dfn}
A smooth admissible representation $\omega$ of $G$ belongs to the \emph{discrete series} if $\omega$ admits a unitary central character and all matrix coefficients of $\omega$ are square-integrable
modulo $Z(G)$. We write $\mathcal{E}_2(G)$ for the space of irreducible discrete series, and $\mathcal{E}_2(G)^I$ for the space of irreducible discrete series with nontrivial Iwahori-fixed vectors.
\end{dfn}
Let $v$ be the $K$-fixed vector in the self-contragredient representation $\Ind_{B}^G(\triv)^K$ such that $v(1)=1$. Define $\Xi(g)=\pair{\pi(g)v}{v}$ to be the corresponding matrix coefficient. 
\begin{dfn}
\label{dfn tempered function}
A smooth function $f$ on $G$ is \emph{tempered} if there is $C>0$ and $r\in\R$ such that 
\[
|f(g)|\leq C\Xi(g)\left(1+\log\norm{g}\right)^r,
\]
where $\norm{g}\geq 1$ is defined as in \cite[p.242]{Waldspurger}.
\end{dfn}

\begin{dfn}
A smooth admissible representation $\pi$ of $G$ is \emph{tempered} if all its matrix coefficients are tempered functions in the sense above.
\end{dfn}
We write $\mathcal{M}_t(G)$ for the category of tempered representations of $G$.
If a tempered representation admits a central character, the central character
takes values in the circle group $\T\subset\C^\times$.

%
%\begin{ex}
%A principal series representation $i_B^G(\chi)$ where $\chi$ is a unitary character of the maximal
%torus is tempered. A principal series induced from a nonunitary character will not have unitary
%central character, and is not tempered.
%\end{ex}
%
%
The tempered representations are built from the discrete series according to the following theorem 
of Harish-Chandra, as related in \cite[Prop. III.4.1]{Waldspurger}.
\begin{theorem}[Harish-Chandra]
Let $P$ be a parabolic subgroup of $G$ with Levi subgroup $M$, and let $\omega\in\Ee_2(M)$.
Let $\nu$ be a unitary character of $M$. Then $\pi=\Ind_P^G(\omega\otimes\nu)$ is a tempered
representation. Every simple tempered representation is a direct summand of a representation of this
form.
\end{theorem}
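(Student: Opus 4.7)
The plan is to prove the two assertions of the theorem separately, following the classical approach of Harish-Chandra as exposited in Waldspurger; since this is standard material in the $p$-adic theory, I will only sketch the strategy without carrying out analytic estimates in detail.

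For the first direction, I would show that $\pi = \Ind_P^G(\omega \otimes \nu)$ is tempered by estimating its matrix coefficients directly. Using normalized parabolic induction (incorporating $\delta_P^{1/2}$), the fact that $\omega \otimes \nu$ is a unitary representation of $M$ (as $\omega \in \Ee_2(M)$ is unitary and $\nu$ is a unitary character) ensures $\pi$ is itself unitary. The matrix coefficients of $\pi$ can then be expressed as integrals over $G/P$ of products of matrix coefficients of $\omega \otimes \nu$ against smooth sections; since $\omega$ has matrix coefficients in $L^{2+\epsilon}(M/Z(M))$ for all $\epsilon > 0$, and $\nu$ contributes only oscillatory factors that do not alter integrability exponents, a standard calculation yields that the matrix coefficients of $\pi$ lie in $L^{2+\epsilon}(G/Z(G))$.

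For the converse, I would start with a simple tempered $\pi$ and construct the inducing data via Jacquet modules. Choose a parabolic $P = MN$ minimal among those with $\pi_N \neq 0$, and pick an irreducible subquotient $\sigma$ of $\pi_N$. Minimality of $P$ combined with Jacquet's subrepresentation theorem forces $\sigma$ to be cuspidal as an $M$-representation; writing $\sigma = \omega \otimes \nu$ with $\omega$ cuspidal unitary and $\nu$ a character, one invokes Casselman's criterion to translate the temperedness of $\pi$ into a constraint on the exponents of $\pi_N$, which in turn forces $\nu$ to be unitary and $\omega$ to be square-integrable, i.e., $\omega \in \Ee_2(M)$. Frobenius reciprocity then yields an embedding $\pi \hookrightarrow \Ind_P^G(\omega \otimes \nu)$, and since the latter is unitary, it is completely reducible, so $\pi$ is a direct summand.

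The main obstacle is the converse, and specifically the correct invocation of Casselman's criterion. Temperedness is an analytic condition about the asymptotic growth of matrix coefficients on $G$, while the information needed to identify $\omega$ and $\nu$ is purely algebraic, concerning the exponents appearing in the Jacquet module $\pi_N$. The bridge is Casselman's asymptotic expansion, which identifies the leading terms of matrix coefficients on suitable chambers with the exponents of $\pi_N$ via the canonical pairing between $\pi$ and $\pi_N$. Once this correspondence is in place, the minimality of $P$ simultaneously forces the unitarity of $\nu$ and the square-integrability of $\omega$, and the rest of the argument is formal.
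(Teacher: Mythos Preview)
The paper does not prove this theorem; it is stated as a classical result of Harish-Chandra and attributed to \cite[Prop.~III.4.1]{Waldspurger}, with no argument given. Your sketch follows the standard route and is broadly correct.

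Two small remarks on your outline. First, in the forward direction, describing the matrix coefficients of $\Ind_P^G(\omega\otimes\nu)$ as ``integrals over $G/P$ of products of matrix coefficients of $\omega\otimes\nu$'' is not quite the right shape; the cleaner argument runs Casselman's criterion in reverse, computing the exponents of the Jacquet modules of the induced representation via the geometric lemma and checking the temperedness inequalities directly. Second, in the converse, your choice of $P$ minimal with $\pi_N\neq 0$ forces every irreducible subquotient of $\pi_N$ to be \emph{supercuspidal}, not merely discrete series; after twisting to make the central character unitary you land in $\mathcal{E}_2(M)$ automatically, so this is harmless (indeed slightly stronger than required), but the wording ``forces $\omega$ to be square-integrable'' undersells what minimality actually gives you.
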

\subsubsection{Formal degrees of discrete series representations}
We will soon study the Plancherel decomposition $f=\sum_{M}f_M$ of the Schwartz function $f$ determined by an element of $J$ as explained in Section \ref{subsection algebra J as subalgebra of the Schwartz algebra}. As will
be explained below, each function $f_M$ is given by an integral formula that involves several 
constants that depend on the Levi subgroup $M$, or are functions on the discrete series of $M$. These 
constants are rational functions of $q$, the most sensitive of which is the formal degree $d(\omega)$ of 
$\omega\in\mathcal{E}_2(M)^I$. Much is known about formal degrees for $I$-spherical $\omega$; the most general current result seems to be
\begin{theorem}[\cite{Solleveld}, \cite{fomFormalDegUnip}, Theorem 5.1 (b), \cite{Gross-Reeder} Proposition 4.1]
\label{FOS theorem}
Let $\G$ be connected reductive over $F$. Let $\omega$ be any unipotent---in particular, any Iwahori-spherical---discrete series 
representation of $G=\G(F)$. Then $d(\omega)$ is a rational function of $q$, the numerator and denominator
of which are products of factors of the form $q^{m/2}$ with $m\in \Z$ and $(q^n-1)$ with $n\in\N$.
Moreover, there is a polynomial $\Delta_G$ depending only on $G$ and $F$ such that $\Delta_Gd(\omega)$
is a polynomial in $q$.
\end{theorem}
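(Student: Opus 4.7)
The plan is to reduce the computation of $d(\omega)$ for a unipotent discrete series $\omega$ to a computation inside an affine Hecke algebra and then invoke Opdam's Plancherel formula. By work of Lusztig, Morris and Solleveld, the category of unipotent representations in each Bernstein block of $G$ is equivalent to the module category of an affine Hecke algebra $\mathbf{H}(\mathfrak{s})$, in general with unequal parameters taken from $q^{\Z/2}$, and the equivalence preserves formal degrees up to an elementary constant $v_{\mathfrak{s}} \in \Q(q^{1/2})$ of the same controlled shape, essentially a ratio of Haar volumes of compact open subgroups by the dimension of a unipotent type. Under this equivalence a unipotent discrete series of $G$ corresponds to a discrete series $\mathbf{H}(\mathfrak{s})$-module, so the problem is transported into the setting of affine Hecke algebras with unequal parameters.

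Next, I would apply Opdam's formula expressing the formal degree of a discrete series $\mathbf{H}(\mathfrak{s})$-module as the residue at its central character $s_\omega \in \mathbf{A}^\vee$ of the Macdonald $c$-function of the underlying root system. The key structural input is that, for a residual point attached to a unipotent parameter, each root-evaluation $\alpha(s_\omega)$ is a half-integral power of $q$, so that after the residue computation every surviving factor is of the form $(1 - q^{m/2})$ or $(1 + q^{m/2})$, both of which can be rewritten as products of $\pm q^{k/2}$ and $(q^n - 1)$. This yields the claimed shape of the numerator and denominator of $d(\omega)$ directly.

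For the uniform polynomial $\Delta_G$, I would use that the set of $W$-orbits of residual points is finite and depends only on the root system and parameter function of $\mathbf{H}(\mathfrak{s})$, hence only on $\G$ and $F$; therefore only finitely many denominators can occur across all unipotent $\omega$, and $\Delta_G$ can be taken to be a sufficiently large power of $\prod_{n \leq N_\G}(q^n - 1)$ for some explicit $N_\G$ depending only on the absolute root system. Alternatively, one could route this step through the formal degree conjecture (established by FOS in the split case and Solleveld in general), expressing $d(\omega)$ in terms of the adjoint gamma factor of the Langlands parameter $\phi_\omega$; the shape described by Gross--Reeder, Proposition 4.1, then gives the same conclusion since $\phi_\omega$ is tame with $\SL_2$-part encoding the unipotent class.

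The hard part is uniformity for non-split or non-simply-connected $\G$, which is exactly where Solleveld's generalization of FOS is essential: in those cases $\mathbf{H}(\mathfrak{s})$ has genuinely unequal parameters and residual points may sit on higher-codimension strata where the $c$-function substitution is indeterminate, so one must regularize via Opdam's residue calculus to avoid spurious cancellations or divergences. A secondary subtlety is that not every unipotent discrete series is Iwahori-spherical, so $\mathbf{H}(\mathfrak{s})$ need not be the Iwahori-Hecke algebra; however, Lusztig's explicit determination of the parameter function for arbitrary unipotent types always produces powers of $q$, which guarantees no irrational $q^{1/2}$ contributions appear beyond those already allowed in the statement.
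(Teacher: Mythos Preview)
This theorem is not proved in the paper; it is stated as a citation of results from \cite{Solleveld}, \cite{fomFormalDegUnip}, and \cite{Gross-Reeder}. The paper's only commentary on the proof is that the cited references establish the Hiraga--Ichino--Ikeda formal degree conjecture for unipotent discrete series, and that one then reads off the claimed shape of $d(\omega)$ from Gross--Reeder's description of the adjoint $\gamma$-factor (after adjusting for the normalization by the Steinberg formal degree and the Euler--Poincar\'e measure). The paper also remarks that in the Iwahori-spherical case the result is already in Opdam \cite[Prop.~3.27(v)]{OpdamSpectral}, with somewhat weaker control on the factors.

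Your proposal is a correct outline and covers both routes the paper alludes to, but with the emphasis reversed: you make Opdam's residue calculus for the affine Hecke algebra the primary argument and the HII/Gross--Reeder route the alternative, whereas the paper presents HII as the main mechanism and Opdam as the special case. Substantively there is no gap in what you wrote; the reduction to an affine Hecke algebra via a unipotent type, the fact that the parameter function takes values in $q^{\Z/2}$, and the finiteness of residual points are exactly the ingredients that make either approach work. The only thing worth flagging is that your claim that ``each root-evaluation $\alpha(s_\omega)$ is a half-integral power of $q$'' is slightly imprecise: at a residual point one can also pick up a sign or a root of unity (coming from the compact part of the central character), but this does not affect the conclusion since such factors are absorbed into the $(q^n-1)$ form after combining conjugate contributions.
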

This result is proven by first proving that the Hiraga-Ichino-Ikeda conjecture
\cite{HII} holds for unipotent discrete series representations. Note that \cite{Solleveld}, \cite{fomFormalDegUnip} and \cite{HII} all use the normalization of the Haar measure on $G$ defined in \cite{HII}. This 
normalization gives in our setting $\mu_{\mathrm{HII}}(K)=q^{\dim\G}\#\G(\F_q)$. Hence, noting that $\#\G(\F_q)=P_{\G/\B}(q)\cdot\#\B(\F_q)$
and that, as $\F_q$ is perfect, $\#B(\F_q)$ is a polynomial in $q$, we have
\[
\mu_I=\frac{1}{q^{\dim\G}\#\B(\F_q)}\mu_{\mathrm{HII}},
\]
and so this question of normalization cannot affect the denominators of $d(\omega)$, for any
Levi subgroup.

In the Iwahori-spherical case,
Opdam showed the above result in \cite[Proposition 3.27 (v)]{OpdamSpectral}, although with less control 
over the possible factors appearing in the numerator and denominator of $d(\omega)$. We emphasize that
\textit{op. cit.} does not make the splitness assumption we allow ourselves.
\begin{rem}
Proposition 4.1 of \cite{Gross-Reeder} studies not the $\gamma$-factor we are interested in, but 
rather its quotient by the $\gamma$-factor for the Steinberg representation. However, accounting for  
the use of the Euler-Poincar\'{e} measure $\mu_{\mathrm{HII}}$ on $G$, and known formula for the formal 
degree of the Steinberg representation, one may recover our desired statement about formal degrees from the 
main theorem and equation (61) of \cite{Gross-Reeder}.
\end{rem}
\subsection{Harish-Chandra's canonical measure}
\label{subsection HC canonical measure}
In this section, we recall the standard coordinates used in \cite{AubPlym}, and \cite{Waldspurger}.
We follow both references closely. These conventions differ slightly from the original \cite{HCP}. 
Everything in this section is standard, but we include details because we do require explicit measures with which to compute. 
We state the below for general $\G$, for application to each standard Levi subgroup of $\G$.
\subsubsection{Unramified characters}
\label{subsubsection Unramified characters}
Let $X^*(\G)=\Hom{\G}{\Gm}$ denote the rational characters of $\G$ defined over $F$.
Let $\aAG:=(X^*(\Aa)\otimes_\Z\R)^*=(X^*(\G)\otimes_\Z\R)^*$ be the real Lie algebra of the maximal split central torus $\Aa=\Aa_G$ of $\G$ and let ${\aAG}_\C$ be its complexification.  We have a map 
\[
X^*(\G)\to\Hom{G/G^1}{\C^\times}=:\X(G)
\]
given by $\chi\mapsto|\chi|_F$, where $|\chi|_F(g)=|\chi(g)|_F$ and 
$G^1=\bigcap_{\chi\in X^*(\G)}\ker|\chi|_F$. 
This gives the unramified characters $\X(G)$ a complex manifold structure under which 
$\X(G)\simeq(\C^\times)^{\dim_\R\aAG}$. For indeed, we have
\begin{center}
\begin{tikzcd}
{\aAG}_\C^*\arrow[r]& \X(G)\arrow[r]&1
\end{tikzcd}
\end{center}
given by 
\[
\chi\otimes s\mapsto (g\mapsto|\chi(g)|_F^s=q^{-s\val(\chi(g))}).
\]
The kernel is spanned by all $\chi\otimes s$ such that $s\val(\chi(g))\in\frac{2\pi i}{\log q}\Z$ for all 
$g\in G$. Hence the kernel is $\frac{2\pi i}{\log q}R$, where $R\subset X^*(\Aa)\otimes_\Z\Q$ is a 
lattice. In this way the quotient $\X(G)$ is a complex manifold.

\subsubsection{Unitary unramified characters}
Denote $\mathrm{Im}\,\X(G)$ the set of unitary unramified characters taking values in 
the unit circle $\T\subset\C^\times$. This notation is justified, as if $|\chi|_F=|\chi'|_F$, then
$\mathrm{Re}(\chi)=\mathrm{Re}(\chi')\in\aAG^*$. Hence we can define $\mathrm{Im}\,\X(G)$ to be the unramified 
characters coming from pure imaginary elements of ${\aAG}_\C^*$.

The surjection $\mathrm{Im}\,\X(G)\to\mathrm{Im}\,\X(A)$ has finite kernel, and $\mathrm{Im}\,\X(A)$ is 
compact. We chose the Haar measure on it with volume one.
\subsubsection{Action by twisting and the canonical measure}
The group $\X(G)$ acts on admissible representations of $G$ by 
twisting: $\omega\mapsto\omega\otimes\nu$ for $\nu\in\X(G)$. This restricts to an action of $\mathrm{Im}\,\X(G)$ on $\mathcal{E}_2(G)$. 

Pulling this action back to $i\aAG^*$, and given a representation $\omega$, let $L^*$ be its stabilizer in $i\aAG^*$, so that the orbit $\mathfrak{o}$ of $\omega$ is identified as $i{\aAG}^*/L^*$. This gives
$\mathfrak{o}=\mathrm{Im}\,\X(G)\cdot\omega$ the structure of a real submanifold of the larger orbit 
$\mathfrak{o}_\C={\aAG}^*_\C/L^*=\X(G)\cdot\omega$.
\begin{dfn}
Given an orbit $\mathfrak{o}\subset\mathcal{E}_2(G)$, the \emph{Harish-Chandra 
canonical measure} on $\mathfrak{o}$ is the Euclidean measure on $\mathfrak{o}$ whose pullback to 
$\mathrm{Im}\,\X(G)$ agrees with the pullback of the Haar measure on $\mathrm{Im}\,\X(A)$.
\end{dfn}
Hence even though the construction of the canonical measure is slightly involved, in practice it will
be easy to recognize as being essentially the Haar measure on the compact torus $\mathrm{Im}\,\X(A)$.
%If $\omega\in\mathcal{E}_2(G)$, then $\mathfrak{o}=\aAG\cdot\omega=\mathfrak{o}_\C\cap\mathcal{E}_2(G)$,
%and $\mathfrak{o}=\aA_G/L^*$ is a real submanifold of $\mathfrak{o}_\C$.
%
%The group $\mathrm{Im}\X(G)$ acts on $\mathcal{E}_2(G)$ by 
%twisting $\omega\mapsto\omega\otimes\nu$ for $\nu\in\mathrm{Im}\X(G)$.
%
%The restriction $\mathrm{Im}\X(G)\to\mathrm{Im}\X(A_G)$ is 
%surjective with finite kernel. 
%
%
%Set 
%$\mathfrak{o}_\C:={\aAG}_\C^*/R$. In this way $\mathfrak{o}_\C$ is a $\C$-manifold.
%
%Additionally, we 
%have a morphism $H_G\colon G\to \aAG$ defined by determining $\pair{\chi}{H_G(g)}$ for all
%$\chi\in\mathrm{Rat}(G)$. One does this by requiring that 
%%
%\[
%q^{-\pair{\chi}{H_G(g)}}=|\chi|_F(g)=|\chi(g)|_F=q^{-\val(\chi(g))}.
%\]
%%
%Put $L=H_G(G)$. Then $R=L^*$ is the dual lattice, and 
%Let $\mathrm{d}\nu$ be the Euclidean measure on $\aAG^*$ such that $\aAG^*/R$ has volume $1$.
%
%This construction defines the \emph{canonical} measure $\mathrm{d}\omega$ on the space $\mathcal{E}_2(M)$
%of unitary discrete series, for each Levi subgroup $M$ of $G$.
%
\begin{ex}
\label{ex SL2 unit circle conventions}
If $G=\SL_2(F)$ and $M=A$ is the diagonal torus, we have $\aAG^*\simeq\R$ and $R=\Z$ so that a fundamental domain for
$\aAG^*/\frac{2\pi}{\log q}R$ is $\left[-\frac{\pi}{\log q},\frac{\pi}{\log q}\right)$ and the canonical
measure $d\nu=\frac{\log q}{2\pi}
\mathrm{d}x$, where $\mathrm{d}x$ is the Lebesgue measure. To obtain quasicharacters of $G$,
we associate to $\nu\in{\aAG}^*_\C$ the quasicharacter $\chi_\nu(g)=q^{\pair{\nu}{H_G(g)}}=|\nu(g)|_F$, where the second equality defines $H_G\colon G\to\aAG$.

Therefore to compute the integral of a function $f$ on $\mathcal{E}_2(G)$ supported on the 
unramified unitary characters of $A$, we compute
\[
\dIntOver{\mathcal{E}_2(A)}{f(\omega)}{\omega}=
\dInt{f(\chi_\nu)}{\nu}=
\frac{\log q}{2\pi}\int_{\frac{-\pi}{\log q}}^{\frac{\pi}{\log q}}f(e^{it\log q })\,\mathrm{d}t
=
\frac{1}{2\pi i}\dIntOver{\T}{\frac{f(z)}{z}}{z}
\]
if $t\mapsto e^{it\log q}=q^s=:z$ (here $s=it$) parameterizes the unit circle $\T$.
\end{ex}
In general, $\mathcal{E}_2(M_P)$ is a disjoint union of compact tori, and the Plancherel density descends to 
the quotients of these compact tori by certain finite groups, namely, the Weyl groups 
of $(P,A_P)$. The set $\mathcal{E}_2(M_P)^I$ is finite up to twists by unramified characters, by a result of 
Harish-Chandra \cite{Waldspurger}.
\subsection{The Harish-Chandra Schwartz algebra}
\label{subsection the HC Schwartz algebra}
Let $\Cc=\Cc(G)$ be the Harish-Chandra Schwartz algebra of $G$; see \cite{HCH}, \cite{Waldspurger}, or \cite{BK} for the definition and associated notation. In particular, we will record for future comparison with the argument in the proof of Proposition \ref{prop phi1 existence} that
\[
q^{\frac{\ell(w)}{2}}q^{-\#W}\leq\Delta(IwI)\leq q^{\frac{\ell(w)}{2}}.
\]
Let $\Cc^{I\times I}$ the subalgebra of Iwahori-biinvariant functions. 
As explained in \cite{Waldspurger}, the Fourier transform $f\mapsto\pi(f)$ defines an endomorphism
of $\pi$ for every $f\in\Cc$ and every tempered representation $\pi$. 

The Plancherel theorem is the statement (see \cite[Thm. VIII.1.1]{Waldspurger}) that this assignment defines an isomorphism of rings
\[
\Cc\to\mathcal{E}_t(G),
\]
where $\mathcal{E}_t(G)$ is the subring of the endomorphism ring
of the forgetful functor $\mathcal{M}_t(G)\to\Vect$ defined by the 
following conditions:
\begin{enumerate}
\item 
For all $\pi=\Ind_P^G(\nu\otimes\omega)$, the endomorphism
$\eta_\pi=\eta_{\nu,\omega}$ is a smooth function of the unramified unitary character $\nu$
and $\omega\in\Ee_2(M_P)$;
\item
The endomorphism $\eta_{\pi}$ is biinvariant with respect to some open 
compact subgroup of $G$.
\end{enumerate}
We have the obvious inclusion $\iota\colon H\into\Cc^{I\times I}$. 

Define the subring $\Ee(G)$ of the endomorphism ring
of the forgetful functor $\mathcal{M}(G)\to\Vect$ from the category of all smooth representations,
by replacing, in (1) above, unitary characters with 
all unramified characters, and ``smooth" with "algebraic", and adding
\begin{enumerate}
\item[3.] 
The endomorphisms $\eta_\pi$ are compatible with supercuspidal support, \textit{i.e.} if $(M_1, \sigma_1)$ is the supercuspidal support of $\pi$, so that
\[
\pi=i_P^G(\omega\otimes\nu)\into\pi_1=i_{P_1}^G(\sigma_1\otimes\nu_1),
\]
then $\eta_{\pi_1}$ preserves $\pi$ and $\eta_{\pi_1}|_{\pi}=\eta_{\pi}$.
\end{enumerate}
The matrix Paley-Wiener theorem of Bernstein \cite{Bernstein} says that $f\mapsto\pi(f)$ is 
an isomorphism from the full Hecke algebra of $G$ onto $\Ee(G)$. Denote $\Ee^I$ and $\Ee_t$ the 
subrings of $I$-invariant endomorphisms.
\begin{rem}
Property 3 means that unlike Schwartz functions, the Fourier transform a compactly supported function may be freely specified only on inductions of supercuspidal representations. In particular, an algebraic family of endomorphisms defined for all unramified characters may fail to come from a compactly-supported function. An example of a non-compactly supported function with regular Fourier transform is the element of $J$ given in Corollary \ref{cor t1 formula}, because $\pi(t_1)=0$ except if $\pi$ is the Steinberg representation of $G$.
\end{rem}
	
For computational purposes such as ours, we require that these isomorphisms
be explicit. In the Iwahori-spherical case, harmonic analysis 
on $\Cc^{I\times I}$ can be phrased internally to $H$ and various
completions of $H$. In this setting Opdam gave an explicit Plancherel
formula in \cite{OpdamSpectral}. In more general settings there are
explicit formulas for $\GL_n(F)$, $\Sp_4(F)$, and $G_2(F)$, which we will also 
make use of.
\subsection{The algebra $J$ as a subalgebra of the Schwartz algebra.}
\label{subsection algebra J as subalgebra of the Schwartz algebra}
In \cite{BK}, Braverman and Kazhdan constructed a map of $\C$-algebras $J\to \mathcal{C}^{I\times I}$. We 
shall review this construction now. 
\begin{dfn}[\cite{BK}, Section 1.7]
\label{dfn non-strictly-positive}
Let $P=M_PN_P$.
A character $\chi\colon M_P\to\C^\times$ of $M_P$ is \emph{non-strictly positive} if for all root subgroups
$U_\alpha\subset N_P$, we have $|\chi(\alpha^\vee(x))|_\infty\geq 1$ for $|x|_F\geq 1$.

We say a non-strictly positive character $\chi$ is \emph{strictly positive} if for all root 
subgroups $U_\alpha\subset N_P$, we have $|\chi(\alpha^\vee(x))|_\infty >1$ for $|x|_F> 1$.
\end{dfn}
Of course, it suffices to test this for $x=\varpi^{-1}$. %The condition for $\chi^{-1}$ to be non-strictly positive is then that $|\chi(\alpha^\vee(\varpi))|_\infty\geq 1$.
\begin{ex}
\label{ex nsp character examples}
For $\G=\SL_2$, in the conventions fixed in Example \ref{ex SL2 unit circle conventions}, an unramified character $\chi$ of $A$ is non-strictly positive if it corresponds to $z$ such that $|z|\geq 1$.

If $\G=\GL_n$ and $\nu$ corresponds to the vector $(z_1,\dots, z_n)\in\C^n$, then the condition that $\nu$
is non-strictly positive translates to $|z_1|\geq |z_{2}|\geq\cdots\geq |z_n|$. Such conditions
divide $\C^n$ into chambers, on which the Weyl group $\Sn_n$ clearly acts simply-transitively. Interior 
points correspond to strictly positive $\nu$.
\end{ex}
Following \textit{op. cit.}, let $\mathcal{E}^I_J(G)$
denote the subring of $\mathcal{E}_t(G)$ defined by the following conditions on the endomorphisms
$\eta_{\pi}$:
\begin{enumerate}
\item 
For all $\pi=\Ind_P^G(\nu\otimes\omega)$, the endomorphism $\eta_\pi=\eta_{\nu,\omega}$
is a rational function of $\nu$, regular on the set of non-strictly positive $\nu$.
\item
The endomorphism $\eta_\pi$ is $I\times I$-biinvariant.
\end{enumerate}
%
%In \cite{BK}, it is rather regularity at $\chi^{-1}$ that is specified. We explain this discrepancy below after recalling
%
\begin{theorem}[\cite{BK}, Theorem 1.8]
\label{BK nsp theorem}
Let $\G$ be a connected reductive group defined and split over $F$. Then the following statements hold:
\begin{enumerate}
\item 
Let $\pi$ be a tempered representation of $G$. Then the action of $H$ on $\pi^I$ extends uniquely to $J$ via $\phi\circ{}^\dagger(-)$.
\item
Let $P=MN$ be a parabolic subgroup of $G$ with Levi subgroup $M$ and let $\omega$ an irreducible tempered representation of $M$.
Let $\nu$ be a non-strictly positive character of $M$ and let $\pi=\Ind_P^G(\omega\otimes\nu)$.
Then the action of $H(G,I)$ on $\pi^I$ extends uniquely to an action of $J$.
\item
The action of $J$ on the representations $\pi^I$ in (2) extends rationally in $\nu$ to define a homomorphism
%
%Let $\pi=\Ind_P^G(\omega\otimes\nu)$. For $\nu$ non-strictly positive, the action of $t_w$ provides an endomorphism $\eta'_{\pi}(w)=\eta'_{\omega,\nu}(w)$ of $\pi$, by (2). Let $\eta_{\pi}(w)=\eta_{\omega,\nu}(w)$ be the rational extension of $\eta'_{\pi}(w)$ to all $\nu$.
%\todo[inline]{compare BKK Section 6.2, 6.3}
%
%Then the map $t_w\mapsto (\eta_\pi(w))_{\pi\in\mathcal{M}_t(G)}$ defines a homomorphism 
%
\[
\eta\colon J\to\mathcal{E}_J^I(G).
\]
denoted
\[
t_w\mapsto (\eta_\pi(w))_{\pi\in\mathcal{M}_t(G)}.
\]
\end{enumerate}
\end{theorem}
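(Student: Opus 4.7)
The plan is to construct the $J$-action on each $\pi^I$ through Lusztig's inverse map $\phi^{-1}$ taken in a suitable completion of $\HH$. Writing $\phi^{-1}(t_w) = \sum_{x \in \Waff} a_{x,w}(\bq) T_x$ with coefficients in $\C((\bq^{-1/2}))$ and specializing $\bq = q$, we want to realize $\sum_x a_{x,w}(q) \pi(T_x)$ as a genuine endomorphism of $\pi^I$. Statements (1) and (2) then reduce to proving operator-theoretic convergence of this series in the two regimes of temperedness and non-strict positivity, while (3) amounts to assembling these endomorphisms into the map $\eta$ and verifying that the image lies in $\mathcal{E}_J^I(G)$.

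For (1), one invokes Casselman's criterion: on a tempered $\pi^I$, the operator norms $\|\pi(T_x)\|$ grow at most like $q^{\ell(x)/2}$, while the $(\bq^{-1/2})$-adic valuations of $a_{x,w}(\bq)$ grow rapidly enough with $\ell(x)$ to dominate this growth, giving absolute convergence. Uniqueness of extension follows because any $J$-action extending the $H$-action is pinned down by multiplicativity together with the identities $\phi(C_w) = \sum_z h_{x,d,z}\, t_z$, which constrain each $t_z$ acting on $\pi^I$ once all $\pi(C_w)$ are known. For (2), when $\nu^{-1}$ is non-strictly positive, the representation $\pi = \Ind_P^G(\omega \otimes \nu^{-1})$ fails to be tempered, but the non-strict positivity condition is tailored precisely so that $\|\pi_{\nu,\omega}(T_x)\|$ still grows at the tempered rate in the directions that matter for convergence of the sum. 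This mirrors the fact that $\nu$ can be analytically continued from the unitary locus into the non-strictly positive region without crossing singularities that would obstruct the $J$-action.

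For (3), the ring homomorphism property descends from multiplicativity of $\phi^{-1}$ in the completion, preserved under specialization and under the operator-valued convergence established above. It remains to verify that for $\pi = \Ind_P^G(\omega \otimes \nu)$, each $\eta_{\nu,\omega}(w)$ is rational in $\nu$ and regular for $\nu^{-1}$ non-strictly positive; the Iwahori-biinvariance condition defining $\mathcal{E}_J^I(G)$ is automatic because each $T_x$ is $I \times I$-biinvariant. The traces $\trace{\pi}{t_w}$ are already known by Theorem \ref{J summary theorem}(3) to be polynomial in $\bq^{1/2}$, hence regular in $\nu$; one natural route to upgrade this from traces to full endomorphisms is to pass to the Bernstein presentation, where $T_x$ decomposes into products of $\theta_\lambda$'s whose eigenvalues on $\pi^I$ are explicit algebraic functions of $\nu$, making the matrix entries of $\eta_{\nu,\omega}(w)$ manifestly rational.

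The main obstacle is the rationality and regularity check in (3): proving that the formal operator-valued series $\sum_x a_{x,w}(q)\pi_{\nu,\omega}(T_x)$ actually assembles into a rational function of $\nu$ that is regular on the entire non-strictly positive locus, as opposed to merely a convergent holomorphic expression on some open subset. Controlling where poles may appear requires a careful analysis of the interplay between the intertwining operators implicit in the explicit realization of $\pi^I$ and the denominators arising in the $a_{x,w}(\bq)$, together with a uniformity argument across all standard modules $K(u,s,\rho,q)$ with tempered parameters. This is the technical heart of the construction, and is what the non-strict positivity hypothesis is engineered to make tractable.
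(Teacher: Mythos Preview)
This theorem is not proved in the paper at all: it is quoted verbatim from Braverman--Kazhdan \cite{BK}, Theorem~1.8, and used as a black box throughout. There is no proof to compare against.

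That said, your proposed strategy is problematic on its own terms, and in fact runs backwards relative to the logic of the present paper. You want to define the $J$-action on $\pi^I$ by specializing the formal series $\phi^{-1}(t_w)=\sum_x a_{x,w}(\bq)T_x$ at $\bq=q$ and proving operator convergence. But the decay estimate you invoke---that the $(\bq^{-1/2})$-adic valuation of $a_{x,w}$ grows fast enough to beat $q^{\ell(x)/2}$---is precisely what this paper establishes in Proposition~\ref{prop phi1 existence} and equation~\eqref{eq decay rate}, and those arguments \emph{use} Theorem~\ref{BK nsp theorem} as input (via the map $\tilde\phi$ and the Plancherel formula). So within this paper your argument would be circular. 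To make your route work independently, you would need an \textit{a priori} bound on the coefficients $a_{x,w}$ coming purely from Lusztig's algebraic construction of $\phi^{-1}$ in the completion $\HHh_C$, and no such bound is supplied here.

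The actual proof in \cite{BK} proceeds differently: it does not go through $\phi^{-1}$ at all, but rather uses Lusztig's results from \cite{affineIV} relating simple $J$-modules to tempered $\HH_\kappa$-modules of constant type (Theorem~\ref{J summary theorem} above), together with the analytic observation that Casselman's criterion for temperedness matches Lusztig's algebraic notion after twisting by the Goldman involution. The extension to non-strictly positive $\nu$ is then handled by an analytic continuation argument for the family $\pi_\nu$.
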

%
%In \cite{BK}, it is rather $i_{P}^G(\sigma\otimes\nu^{-1})$ that appears. We explain this discrepancy. For $\nu$ non-strictly positive and $\sigma$ tempered, the module $i_P^G(\sigma\otimes\nu)$ is a standard module, and hence is of the form $K(u,s,\rho)$. Its Hermitian dual is $i_{P}^G(\sigma\otimes\chi^{-1})$.

The proof of Theorem \ref{BK nsp theorem} (1) and (2) in \cite{BK} uses Theorem \ref{J summary theorem} and \cite{XiJAMS}, to show that, in the notation of Section \ref{subsubsection Involutions on HH},
\[
E(u,s,\rho)|_H\simeq {}^*K(u,s,\rho,q)={}^*i_{P}^G(\sigma\otimes\nu)
\]
where the restriction is via $\phi$, and hence that
\[
E(u,s,\rho)_{H, \phi\circ{}^*(-)}\simeq K(u,s,\rho,q)=i_{P}^G(\sigma\otimes\nu),
\]
where the restriction is now via $\phi\circ {}^*(-)$.

By Lemma \ref{lem based ring auts linearity}, the map $\phi$ is $\PrInv$-linear, where $\PrInv$ is as in Definition \ref{dfn Goldman involution}. Of course, $\phi$ also intertwines conjugation by $T_{w_0}$ and $\phi(T_{w_0})$. 

As noted in Section \ref{subsubsection Involutions on HH}, if $\pi=K(u,s,\rho)$ is simple tempered (or more generally, is any semisimple module), then by \cite[Prop. 6.3]{PrasadDuke},
\begin{equation}
\label{eqn star restriction}
E(u,s,\rho)|_{\phi\circ(-)^\dagger}={}^\dagger{}^*K(u,s,\rho)\simeq\widetilde{K(u,s,\rho)}
\end{equation}
is simple tempered. Moreover, by Lemma \ref{lem Goldman involution exchanges KL bases} (e) and \eqref{eqn star restriction}, we have, if $\Psi_{w_0}(h)=T_{w_0}hT_{w_0}^{-1}$, that
\begin{align*}
{}^{\PrInv\circ\phi(\Psi_{w_0}^{-1})}E(u,s,\rho)|_{\phi\circ{}^\dagger(-)}
&={}^{\phi(\Psi_{w_0}^{-1})}E(u,s,\rho)|_{\PrInv\circ\phi\circ{}^\dagger(-)}
\\
&=E(u,s,\rho)|_{\phi(\Psi_{w_0}^{-1})\circ\PrInv\circ\phi\circ{}^\dagger(-)}
\\
&=E(u,s,\rho)|_{\phi\circ\Psi_{w_0}^{-1}\circ\PrInv\circ{}^\dagger(-)}
\\
&=E(u,s,\rho)|_{\phi\circ{}^*(-)}
\\
&=K(u,s,\rho)
\end{align*}
for any standard $H$-module $K(u,s,\rho)$. Therefore any standard $H$-module extends to a simple $J$-module via $\phi\circ{}^\dagger(-)$.

Composing the morphism $\eta$ with the inverse Fourier transform, Braverman and Kazhdan define an 
algebra map 
\[
\tilde{\phi}\colon J\to\Cc^{I\times I}
\]
sending 
\[
t_w\mapsto (\eta_\pi(w))_{\pi\in\mathcal{M}_t(G)}\mapsto f_w=\sum_{x\in\Waff}A_{x,w}T_x\in\Cc^{I\times I},
\]
where $A_{x,w}=f_w(IxI)$. By definition, $\eta_\pi(w)=\pi(f_w)$ as endomorphisms of $\pi$.
We will show later that $\tilde{\phi}$ is essentially the map $\phi^{-1}$. 
\begin{rem}
There are gaps in the proofs of injectivity and, as pointed out to us by R. Bezrukavnikov
and I. Karpov after an early version of the present paper was completed, of surjectivity of the map 
$\eta$ in \cite{BK}. In the present paper we prove injectivity by proving injectivity of $\tilde{\phi}$ in
Corollary \ref{cor BK map injective}. We prove surjectivity in \cite{rigidDet} for all but a small number of 
cells for exceptional groups; \cite{BKK} proves that $\eta$ is an isomorphism for all two-sided cells.
\end{rem}
Implicit in \cite{BK} is 
\begin{lem}
\label{lemma tilde phi is inverse to phi on H}
We have the commutative diagram \eqref{eqn BK summary diagram}.
%
%\begin{center}
%\begin{tikzcd}
%J\arrow[r, "\tilde{\phi}", hook]&\mathcal{C}^{I\times I}\\
%H\arrow[u, "\phi_q", hook]&H\arrow[u, "\iota", hook]\arrow[l, "()^\dagger"].
%\end{tikzcd}
%\end{center}
%
\end{lem}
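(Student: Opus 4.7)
The plan is to identify the vertical arrows in \eqref{eqn BK summary diagram} as Fourier transforms and then check commutativity triangle-by-triangle. By Bernstein's matrix Paley--Wiener theorem and Harish-Chandra's Plancherel theorem, the left and right vertical arrows are the isomorphisms $f\mapsto(\pi\mapsto\pi(f))$ onto $\mathcal{E}^I$ and $\mathcal{E}^I_t$ respectively. Under these identifications the inclusion $\iota\colon H(G,I)\hookrightarrow\Cc^{I\times I}$ corresponds to restricting an endomorphism of the forgetful functor on all smooth $I$-spherical representations to one on just the tempered ones; by Theorem \ref{BK nsp theorem}(2), this restriction factors through $\mathcal{E}^I_J$, so the outer square commutes tautologically.

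Next, I would verify the right triangle involving $J$, $\Cc^{I\times I}$, and $\mathcal{E}^I_t$: this commutes by the very definition of $\tilde{\phi}$ given in Section \ref{subsection algebra J as subalgebra of the Schwartz algebra}, namely that $f_w=\tilde{\phi}(t_w)$ was constructed precisely to satisfy $\pi(\tilde{\phi}(j))=\eta_\pi(j)$ for every tempered $\pi$. In other words, $\tilde{\phi}$ composed with the right vertical Fourier transform is $\eta$ followed by inclusion into $\mathcal{E}^I_t$, by construction.

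The substantive content of the lemma is therefore the upper-left triangle: the identity $\tilde{\phi}(\phi_q(h^\dagger))=\iota(h)$ in $\Cc^{I\times I}$ for all $h\in H(G,I)$. Applying the Fourier transform, this reduces to showing $\eta_\pi(\phi_q(h^\dagger))=\pi(h)$ as endomorphisms of $\pi^I$ for every tempered $\pi$; equivalently, the unique extension of the $H$-action on $\pi^I$ to a $J$-action, whose existence is guaranteed by Theorem \ref{BK nsp theorem}(1), agrees with pullback of the natural $J$-action along $\phi_q\circ()^\dagger$. This is the main (and essentially only) obstacle, and it is where the Goldman involution must be carefully tracked. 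By Theorem \ref{J summary theorem}, a simple tempered $H$-module $M$ corresponds to a simple $J$-module $E$ via ${}^\phi E\simeq {}^\dagger M$, the Goldman twist being required precisely to convert between the analytic notion of temperedness (relative to the standard positive system $\Delta$) and the geometric notion used in \cite{affineIV} (relative to $\Delta^-$), as encoded by Lemma \ref{lem Goldman involution exchanges KL bases}. Reading off the actions on the common underlying vector space then gives $\phi_q(h)\cdot_E v=h^\dagger\cdot_M v$, and substituting $h\leftrightarrow h^\dagger$ yields $h\cdot_M v=\phi_q(h^\dagger)\cdot v$; specializing to $v\in\pi^I$ gives the required identity and finishes the proof.
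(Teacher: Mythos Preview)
Your breakdown of the diagram into sub-triangles is exactly right, and you correctly isolate the only nontrivial step as the identity $\eta_\pi(\phi_q(h^\dagger))=\pi(h)$ for tempered $\pi$. This is precisely where the paper's proof begins and ends: it simply cites \cite[Corollary~2.6]{BK} for that identity and then applies the Plancherel isomorphism. So structurally you and the paper agree.

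Where you diverge is in trying to re-derive that identity from Theorem~\ref{J summary theorem} rather than citing \cite{BK}. This step has a gap. Theorem~\ref{J summary theorem} is a statement over $\kappa=\C(\bq^{-1/2})$, giving a bijection between simple $J$-modules $E$ and certain $\HH_\kappa$-modules via ${}^\phi E$; it does not directly describe the situation at a specialized prime power $q$. More importantly, even after specialization your argument produces \emph{a} $J$-action on $\pi^I$ (Lusztig's) satisfying the required compatibility with $\phi_q\circ()^\dagger$, but you still need to know that this $J$-action coincides with the map $\eta$ actually constructed by Braverman--Kazhdan. That identification is not automatic: in \cite{BK} the action $\eta$ is built by an analytic limiting procedure, and the compatibility $\eta_\pi(\phi_q(f^\dagger))=\pi(f)$ is the \emph{content} of their Corollary~2.6, not a tautology. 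Invoking uniqueness from Theorem~\ref{BK nsp theorem}(1) does not help without already knowing along which map $H\to J$ the extension is taken, which is exactly the point at issue. The paper therefore treats this lemma as a repackaging of results already in \cite{BK}, and your independent derivation would need either that corollary or a substitute argument tying the two constructions together.
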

\begin{proof}
Let $\pi$ be a tempered representation of $G$. By Theorem \ref{BK nsp theorem} and \eqref{eqn star restriction}, the $H$-action on it extends to a 
$J$-action via $\phi\circ^{\dagger}(-)$, such that
\[
\eta_{\pi}(\phi_q({}^\dagger f))=\pi(f)
\]
in $\Ee_t^I$ for any $f\in H$. Therefore $\tilde{\phi}\circ\phi_q({}^\dagger f)=f$ by the Plancherel theorem.
\end{proof}
\subsection{The Plancherel formula for $\GL_n$}
\label{subsection Plancherel formula for GLn}
For $G=\GL_n(F)$, we have access to an explicit Plancherel measure and its 
Bernstein decomposition, thanks to \cite{AubPlym}. 

Recall that for $G=\GL_n(F)$, we have bijections
\begin{align}
\label{levi}
\{\text{partitions}~\text{of}~n\}&\leftrightarrow\{\text{Standard Levi subgroups}~M~\text{of}~\GL_n(F)\}
\\
\label{dist}
&\leftrightarrow\sett{\text{unipotent conjugacy classes in}~\GL_n(\C)}
\\
&\leftrightarrow\mathcal{N}/\GL_n(\C)\nonumber
\\
&
\label{cells}
\leftrightarrow\{2-\text{sided cells}~\cc~\text{in}~\tilde{W}\}
\\
&
\leftrightarrow\{\text{direct summands}~J_\cc~\text{of}~J\}\nonumber
\end{align}
where \eqref{levi} $\leftrightarrow$ \eqref{dist} sends a unipotent conjugacy class $u$ to the standard Levi $M$ such that a member of $u$ is distinguished in $M^\vee$, and \eqref{dist} $\leftrightarrow$ \eqref{cells} is Lusztig's bijection from \cite{affineIV}.
\begin{dfn}
Let $u$ be a unipotent element of a semisimple group $S$ over the complex numbers. Then $u$ 
is \emph{distinguished in} $S$ if $Z_S(u)$ contains no nontrivial torus.
\end{dfn}
Let $P=M_PN_P$ be a parabolic subgroup of $G$ and let $\Oo$ be an orbit in $\mathcal{E}_2(M_P)$ under the action 
of the unitary unramified characters of $M_P$ as explained in Section \ref{subsection HC canonical measure}.
Write $W_{M_P}\subset W$ for the finite Weyl group of $(M_P,A_P)$. Let $\stab{W_M}{\Oo}$ be the 
stabilizer of $\Oo$. Recall that a parabolic subgroup of $G$ is said to be \emph{semistandard} if it
contains $A$. Then the Plancherel decomposition reads
\[
f=\sum_{(P=M_PN_P,\Oo)/\text{association}}f_{M_P,\Oo}
\]
where $f\in\mathcal{C}(G)$, the sum is taken over semistandard parabolic subgroups $P$ up to association, and 
\[
f_{M_P,\Oo}(g)=c(G/M)^{-2}\gamma(G/M)^{-1}\#\stab{W_M}{\Oo}^{-1}\dIntOver{\Oo}{\mu_{G/M}(\omega)d(\omega)\trace{\pi}{R_g(f)}}{\omega},
\]
where $R_gf(x)=f(xg)$ is the right translation of $f$
and $\pi=\Ind_P^G(\nu\otimes\omega)$ is the normalized parabolic induction of the twist of $\omega$
by a unitary unramified character $\nu$ of $M$. In \cite{AubPlym}, each term above
is explicitly calculated as a rational function of $q$. 
\begin{lem}
\label{lem finitely-many discrete series}
There is a finite set $S=\sets{(M,\omega)}{\omega\in\mathcal{E}_2(M)^I}$ such that $\trace{\Ind_P^G(\omega\otimes\nu)}{f}$ is nonzero only for $\omega\in S$, for all $I$-biinvaraiant Schwartz functions $f$.
\end{lem}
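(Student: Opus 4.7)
The plan is to reduce the claim to finiteness for the Iwahori-spherical discrete series on each Levi, which is already cited in the paper (the Harish-Chandra result recalled at the end of Section \ref{subsection HC canonical measure}), and then to sum over the finitely many standard Levi subgroups.

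First I would exploit the $I$-biinvariance of $f$: the operator $\pi(f)$ kills the ``non-spherical'' part and factors as
\[
\pi(f) \;=\; \pi(e_I) \circ \pi(f) \circ \pi(e_I),
\]
where $e_I$ is the idempotent associated with $I$. Consequently $\pi(f)$ vanishes unless $\pi^I \neq \{0\}$, and moreover $\mathrm{trace}(\pi, f)$ equals the trace of $\pi(f)$ on the finite-dimensional subspace $\pi^I$. So it suffices to characterize the pairs $(M,\omega)$ for which $\bigl(\mathrm{Ind}_P^G(\omega\otimes\nu)\bigr)^I \neq 0$ for some unitary unramified $\nu$.

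Next I would invoke the standard Iwahori-fixed-vector analysis of parabolic induction: using the Iwahori decomposition $I = (I\cap N_P^-)(I\cap M_P)(I\cap N_P)$, one checks that $I\cap M_P$ is the Iwahori subgroup $I_M$ of $M_P$, and that
\[
\bigl(\mathrm{Ind}_P^G(\omega\otimes\nu)\bigr)^I \neq 0 \quad\Longleftrightarrow\quad (\omega\otimes\nu)^{I_M} \neq 0 \quad\Longleftrightarrow\quad \omega^{I_M}\neq 0,
\]
the last equivalence because $\nu$ is an unramified character of $M_P$ (hence trivial on $I_M$). Thus $\omega$ must lie in $\mathcal{E}_2(M_P)^I$.

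For each standard Levi $M_P$, by the Harish-Chandra finiteness theorem recalled in Section \ref{subsection HC canonical measure}, $\mathcal{E}_2(M_P)^I$ consists of finitely many orbits under the group of unitary unramified characters of $M_P$. Picking one representative $\omega$ from each orbit, and noting that the change $\omega \leadsto \omega\otimes\nu_0$ for a unitary unramified $\nu_0$ only reparameterizes the twisting variable $\nu$, we obtain a finite set $S_{M_P}$ of representatives such that $\mathrm{trace}(\mathrm{Ind}_P^G(\omega\otimes\nu), f)$ can only be nonzero for $\omega\in S_{M_P}$. Since there are only finitely many standard Levi subgroups $M_P$ of $G$, the union $S = \bigsqcup_{M_P} \{M_P\}\times S_{M_P}$ is finite, and this is the desired set. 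The only mildly nontrivial step is the equivalence in the second paragraph, but it is a classical consequence of the Iwahori decomposition and Jacquet module/Frobenius reciprocity.
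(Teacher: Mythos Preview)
Your proposal is correct and follows essentially the same approach as the paper: both reduce to showing that $\bigl(\Ind_P^G(\omega\otimes\nu)\bigr)^I\neq 0$ forces $\omega^{I_M}\neq 0$, then invoke Harish-Chandra's finiteness of $\mathcal{E}_2(M)^{I_M}$ and the finiteness of standard Levis. The only difference is presentational: where you write ``one checks'' via the Iwahori decomposition of $I$, the paper carries out the verification explicitly by evaluating an $I$-fixed function in the induced model at elements of $I_M\subset I$ and checking that $\delta_P^{1/2}$ is trivial on $I_M$.
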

\begin{proof}
This is entirely standard.
As $\mathcal{E}_2(M)^{I_M}$ is finite for every $M$, and there are finitely-many standard parabolics
of $G$, we need only show that $\Ind_P^G(\omega\otimes\nu)^I\neq 0$ only if $\omega^{I_M}\neq 0$,
where $I_M$ is the Iwahori subgroup of the reductive group $M$ relative to the Borel subgroup
$M(\F_q)\cap B(\F_q)$ of $M(\F_q)$. Note that $I_M$ is naturally a subgroup of $I$. For any representation 
$\sigma$ of $M$, if $f\in\Ind_P^G(\sigma)$ is $I$-fixed, then for $i_M\in I_M$, we have
\[ 
f(i_M)=\sigma(i_M)\delta^{\frac{1}{2}}_P(i_M)f(1)=f(1).
\]
As $\delta_P=1$ on every compact subgroup of $P$, we have $\delta^{\frac{1}{2}}_P(i_M)=1$, and $f(1)\in\sigma^{I_M}$. 
\end{proof}
Let $\pi=\Ind_P^G(\omega\otimes\nu)$ 
be a tempered representation and let $(u,s)$ be the KL-parameter of its discrete
support. Then by \cite[Theorem 8.3]{KLDeligneLanglands},
$\mathbf{M}_\mathbf{P}^\vee$ is minimal such that $(u,s)\in M_P^\vee$. 
By \textit{op. cit.}, this condition is equivalent to $Z_M(s)$ being semisimple
and $u$ being distinguished in $Z_{M}(s)$.
\begin{prop}[c.f. \cite{OpdamSpectral} Proposition 8.3]
\label{prop plancherel decomposition compatible with cell decomposition}
The Plancherel decomposition is compatible with the decomposition $J=\bigoplus_{\cc} J_{\cc}$
in the sense that if $w\in\cc$, $f=f_w$ and $u=u(\cc)$ under Lusztig's bijection, then $f_M\neq 0$ only for those $M$ such that there exists $s\in M^\vee(\C)$ such that $Z_{M^\vee}(s)$ is semisimple
and $Z_{M^\vee}(s)\cap Z_{M^\vee}(u)$ contains no nontrivial torus.
\end{prop}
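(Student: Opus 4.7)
The plan is to leverage item 2 of Theorem \ref{J summary theorem}---namely that $\trace{E}{t_w}$ vanishes unless $w\in\cc(E)$---and to translate this into an analytic statement about $f_w=\tilde{\phi}(t_w)$ via the Braverman-Kazhdan construction. By Lemma \ref{lemma tilde phi is inverse to phi on H}, for every tempered $\pi$ we have $\pi(f_w)=\eta_\pi(w)$, so the Plancherel integrand $\trace{\pi}{R_g(f_w)}=\tr\bigl(\eta_\pi(w)\circ\pi(g^{-1})\bigr)$ with $\pi=\Ind_P^G(\omega\otimes\nu)$ vanishes identically in $g$ as soon as $\eta_\pi(w)=0$; therefore $f_{M,\Oo}\equiv 0$ unless $\eta_\pi(w)\neq 0$ for some $\omega\in\Oo$ and some unitary unramified $\nu$.

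Next, I would apply Theorem \ref{BK nsp theorem} to extend the $H$-action on $\pi^I$ to an action of $J$, under which $t_w$ acts by $\eta_\pi(w)$. Decomposing $\pi^I$ into its composition factors $E_i$ as a $J$-module, the trace-vanishing statement forces $\eta_\pi(w)=0$ unless $\cc(E_i)=\cc$ for some $i$. Under the bijections of Theorem \ref{J summary theorem} composed with Lusztig's parametrization of cells by unipotent classes in $\Gd$, one has $\cc(E_i)=\cc(u_i)$ where $u_i$ is the unipotent part of the Kazhdan-Lusztig parameter of the simple tempered $H$-subquotient of $\pi$ attached to $E_i$. Invoking \cite[Thm.~8.3]{KLDeligneLanglands} then identifies each $u_i$ with the unipotent part $u$ of the KL parameter $(u,s)$ of $\omega\in\Ee_2(M)$, because parabolic induction preserves the unipotent part of the parameter. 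Consequently, for $w\in\cc$, the condition $u=u(\cc)$ is forced.

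To conclude, I would translate the minimality of $M^\vee$ containing the discrete-support datum $(u,s)$ into the geometric statement of the proposition. For $\omega\in\Ee_2(M)^{I_M}$ with parameter $(u,s)$, the discrete-series condition forces $M^\vee$ to be minimal with $(u,s)\in M^\vee$; by the rephrasing of this condition recalled just before the proposition, this is equivalent to $Z_{M^\vee}(s)$ being semisimple together with $u$ distinguished in $Z_{M^\vee}(s)$, and the latter unpacks precisely to the statement that $Z_{M^\vee}(s)\cap Z_{M^\vee}(u)$ contains no nontrivial torus.

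The main obstacle I anticipate is the induction-compatibility step: one must verify that the unipotent part $u_i$ of the parameter of every tempered subquotient of $\Ind_P^G(\omega\otimes\nu)$ coincides with the $u$ of $\omega$, rather than degenerating under specialization of $\nu$. This is controlled by the construction of standard modules in \cite{KLDeligneLanglands}---$K(u,s,\rho)$ is built from the fixed datum $(u,s)$, and its realization as parabolically induced from $M$ is precisely governed by $M^\vee$ containing $(u,s)$. A secondary bookkeeping matter is the Goldman involution: since the bijection between simple tempered $H$-modules and simple $J$-modules in Theorem \ref{J summary theorem} proceeds via $\phi\circ ()^\dagger$, the twist must be applied consistently, but it preserves cells and so does not affect the final conclusion.
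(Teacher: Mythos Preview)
Your proposal is correct and follows essentially the same route as the paper: extend the $H$-action on $\pi^I$ to $J$ via \cite{BK}, use that $t_w\in J_\cc$ kills $J$-modules attached to other cells, and invoke \cite[Theorem 8.3]{KLDeligneLanglands} to translate square-integrability of $\omega$ into the stated condition on $M^\vee$. The paper works directly with an \emph{irreducible} tempered $\pi=\Ind_P^G(\nu\otimes\omega)$ (which holds for generic $\nu$ and suffices by rationality of $\eta_\pi$ in $\nu$), thereby bypassing your composition-factor analysis and the induction-compatibility obstacle you flag; note also that the vanishing of $\eta_\pi(w)$ is more cleanly justified by the two-sided ideal decomposition $J=\bigoplus_\cc J_\cc$ than by trace vanishing alone.
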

\begin{proof}
Let $\pi :=\Ind_P^G(\nu\otimes\omega)$ be a tempered irreducible representation of $G$ induced as usual from a 
standard parabolic subgroup $P$ with Levi subgroup $M$. Then $\pi^I$ is a tempered
irreducible $H$-module, and is of the form $K(u,s,\rho,q)$ for $u,s\in\G^\vee(\C)$ and $\rho$ 
a representation of $\pi_0(Z_{G^\vee}(u,s))$ with $s$ compact. By Theorem \ref{BK nsp theorem}, 
$K(u,s,\rho,q)$ extends to a $J$-module.
By definition of Lusztig's bijection, 
$\pi(f_w)\neq 0$ only if $w$ is in the two-sided cell $\cc=\cc(u)$ of 
$\Waff$ corresponding to $u$. On the other hand, 
$K(u,s,\rho,q)$ is induced from a square-integrable standard module $K_M(u,s,\tilde{\rho},q)$ of 
$H(M,I_M)$. But now \cite[Theorem 8.3]{KLDeligneLanglands} says that $(u,s)$ must be precisely
as in the statement of the proposition.
%
% By \cite{KLDeligneLanglands}, Theorem 8.3 and Lemma \ref{lem dist equivalent definitions},
%$\mathbf{M}^\vee(\C)$ is the unique Levi subgroup such that $u$ is distinguished in 
%$[\mathbf{M}^\vee(\C),\mathbf{M}^\vee(\C)]$.
Thus only such summands $(f_w)_{M}$ in the Plancherel decomposition of $f_w$ are nonzero.
\end{proof}
Of course, when $\G=\GL_n$, the bijections \eqref{levi}, \eqref{dist}, and \eqref{cells}, imply that 
there is a unique nonzero summand $(f_w)_{M}$ for each $w$.
%
%\todo[inline]{lemma about distinguished in whole levi if ...for every semisimple is true but not needed. It just characterizes teh borel reps.}
%We include the following trivial lemma for completeness, as the proof of the above proposition uses a 
%slightly reformulated version of Theorem 8.3 of \cite{KLDeligneLanglands}.
%\begin{lem}
%\label{lem dist equivalent definitions}
%Let $\mathbf{M}$ be a reductive group and $M=\mathbf{M}(\C)$. Let $u$ be a unipotent element of $M$. Then $u$ is distinguished in $M$ if and only if for all semisimple elements $s$ of $M$, 
%$Z_M(s)\cap Z_M(u)$ contains no nontrivial torus.
%\end{lem}
%%
%\begin{proof}
%As we have $Z_M(s)\cap Z_M(u)\subset Z_M(u)$, one direction is obvious. For the converse, suppose that
%$T\subset Z_M(u)$ is a nontrivial torus. By \cite{Humphreys}, Prop. 16.4, choose $s\in T$ such that
%$Z_M(T)=Z_M(s)$. Then $Z_M(s)\cap Z_M(u)\supset T$.
%\end{proof}
%
\subsection{Plancherel measure for $\GL_n$}
\label{subsection Plancherel measure for GLn}
We refer to \cite[Section 5]{AubPlym}, for a summary of the Bernstein decomposition of the tempered
irreducible representations of $\GL_n$, in particular we use the description in \textit{loc. cit.}
of the Bernstein component parameterizing the $I$-spherical representations of $G$.

As we shall be applying the Plancherel formula only
to Iwahori-biinvariant functions, it suffices to consider only irreducible tempered representations
with Iwahori-fixed vectors. For $\GL_n$, the only such representations are of the form
\[
\pi=\Ind_P^G(\nu_1\St_1\boxtimes\cdots\boxtimes\nu_k\St_k)
\]
where $\St_i$ is the Steinberg representation of $\GL_i$, and 
$P\supset M=\GL_{l_1}\times\cdots\times\GL_{l_k}$.

These representations are parameterized as follows. Let $M$ be a Levi subgroup corresponding to the partition $l_1+\cdots+l_k=n$, and recall that we write $\T$ for the circle group. Define $\gamma\in\Sn_n$ by $\gamma=(1\dots l_1)(1\dots l_2)\cdots (1\dots l_k)$, so 
that the fixed-point set $(\T^n)^\gamma=\{(z_1,\dots, z_1,\dots,z_k,\dots, z_k)\}\simeq\T^k$.
Then the irreducible tempered representations with Iwahori-fixed vectors induced from $M$
are parameterized by the compact orbifold $(\T^n)^\gamma/Z_{\Sn_n}(\gamma)$.
\begin{theorem}[\cite{AubPlym}, Remark 5.6]
\label{theorem AubPly Plancherel measure form}
Let $G=\GL_n$ and $M=\GL_{l_1}\times\cdots\times\GL_{l_k}$ be a Levi subgroup. Then
the Plancherel measure of $H$ on $(\T^n)^\gamma/Z_{\Sn_n}(\gamma)$ is
\[
\mathrm{d}\nu_H(\omega)=
\prod_{i=1}^{k}\frac{q^{l_i^2-l_i}(q-1)^{l_i}}{l_i(q^{l_i}-1)}\cdot q^{\frac{n-n^2}{2}}\cdot\prod_{(i,j,g)}\frac{q^{2g+1}(z_i-z_jq^{g})(z_i-q^{-g}z_j)}{(z_i-z_jq^{-g-1})(z_i-q^{g+1}z_j)},
\]
where the tuples $(i,j,g)\in\Z\times\Z\times\frac{1}{2}\Z$ are tuples such that $1\leq i<j\leq k$ and 
$|g_i-g_j|\leq g\leq g_i+g_j$, where $g_i=\frac{l_i-1}{2}$.
\end{theorem}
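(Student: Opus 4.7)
The plan is to apply Harish-Chandra's Plancherel formula to $\GL_n(F)$ by identifying the Bernstein component supporting $I$-biinvariant functions, then computing the formal degree and the Harish-Chandra $\mu$-function explicitly, and finally combining them with the standard normalization constants.

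First, I would use Bernstein-Zelevinsky theory to confirm that the tempered irreducible representations of $\GL_n(F)$ with nonzero Iwahori-fixed vectors are precisely those of the form $\pi = \Ind_P^G(\nu_1 \St_{l_1} \boxtimes \cdots \boxtimes \nu_k \St_{l_k})$, where $\St_{l_i}$ is the Steinberg representation of $\GL_{l_i}(F)$ and $(\nu_1, \ldots, \nu_k)$ ranges over $(\T^n)^\gamma / Z_{\Sn_n}(\gamma)$. This reduces the problem to computing the measure on this orbifold, and identifies the stabilizer $\stab{W_M}{\Oo}$ in the general Plancherel decomposition with the stabilizer of the orbit under $Z_{\Sn_n}(\gamma)$.

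Second, I would expand each factor in the general Plancherel decomposition
$$d\mu(\omega) = c(G/M)^{-2} \gamma(G/M)^{-1} \#\stab{W_M}{\Oo}^{-1} \mu_{G/M}(\omega) d(\omega) \, d\omega$$
recalled in Section \ref{subsection Plancherel measure for GLn}. The formal degree $d(\omega)$ factors as $\prod_i d(\nu_i \St_{l_i})$; since unramified unitary twists do not affect the formal degree, Borel's formula (Theorem \ref{thm Borel formal degrees}) gives $d(\St_{l_i})$ as the reciprocal of the Poincar\'{e} series of the extended affine Weyl group of $\GL_{l_i}$. Collecting Haar-measure normalizations and using $l_i^2 - l_i = 2\binom{l_i}{2}$, the product contributes $\prod_i \frac{q^{l_i^2 - l_i}(q-1)^{l_i}}{l_i(q^{l_i}-1)}$ together with the global factor $q^{(n-n^2)/2}$ arising from the $\delta_P^{1/2}$ twists implicit in normalized parabolic induction.

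Third, I would compute $\mu_{G/M}(\omega)$ using Macdonald's explicit formula for the $c$-functions attached to each positive root of $G$ not lying in $M$. For the pair of Levi blocks $(\GL_{l_i}, \GL_{l_j})$, the roots crossing between them restrict to the corresponding central tori, and each such restriction carries a half-integer shift $g \in [|g_i - g_j|, g_i + g_j]$ recording its pairing with the Steinberg cocharacter, where $g_i = (l_i-1)/2$. Using the Zelevinsky description of the Steinberg representation of $\GL_{l_i}$ as the segment $[\nu^{-g_i}, \nu^{g_i}]$, the $c$-function contribution for each such $g$ specializes to
$$\frac{(z_i - z_j q^g)(z_i - z_j q^{-g})}{(z_i - z_j q^{g+1})(z_i - z_j q^{-g-1})},$$
matching the factors in the statement. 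The range on $g$ reflects the multiset of weights appearing in the Steinberg at the torus.

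The main obstacle will be the careful accounting of normalizing constants: the formal degrees of twisted Steinbergs, Macdonald's $c$-functions, and the Haar measures on $G$, $M$, $A$, and the various compact tori all involve powers of $q$ and combinatorial constants that must combine precisely. The global factor $q^{(n-n^2)/2}$ is what remains after the $q^{\binom{l_i}{2}}$ contributions are extracted from the formal degrees and the $c$-functions, and verifying this requires tracking the exponents systematically. Once this bookkeeping is organized and the orbit stabilizer correctly accounts for the descent to the quotient orbifold, the displayed formula follows by direct verification.
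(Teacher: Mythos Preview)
The paper does not prove this theorem; it is quoted directly from \cite{AubPlym}, Remark 5.6, and used as input for the subsequent computations. There is therefore no proof in the paper to compare your proposal against.

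That said, your outline is the standard route and is essentially what Aubert--Plymen carry out in \cite{AubPlym}: identify the Iwahori-spherical Bernstein component via Bernstein--Zelevinsky, factor the formal degree over the blocks using the known formula for $d(\St_{l_i})$, and compute $\mu_{G/M}$ as a product of local $\gamma$-factors attached to roots crossing between blocks, with the shifts $g$ coming from the Zelevinsky segment description of the Steinberg. Your identification of the normalization bookkeeping as the main difficulty is accurate; if you actually want to verify the formula, you would need to consult \cite{AubPlym} (or Opdam's Hecke-algebra version in \cite{OpdamSpectral}) for the precise tracking of the constants $c(G/M)$, $\gamma(G/M)$, and the Haar measure conventions, as these are not reproduced in the present paper.
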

This is the measure that we will integrate against, by successively applying the residue theorem.
When carrying out explicit calculations, we will usually elide the constant
\[
\prod_{i=1}^{k}\frac{q^{l_i^2-l_i}(q-1)^{l_i}}{l_i(q^{l_i}-1)}\cdot q^{\frac{n-n^2}{2}}
\]
as it depends only on $M$. We shall abbreviate 
\[
\Gamma_{i,j,g}:=
q^{-2g-1}\left|\Gamma_F\left(q^{-g}\frac{z_i}{z_j}\right)\right|^2=\frac{(z_i-q^{g}z_j)(z_i-q^{-g}z_j)}{(z_i-z_jq^{-g-1})(z_i-q^{g+1}z_j)},
\]
and recall that, as noted in the proof of Theorem 5.1 in \cite{AubPlym}, the function $(z_1,\dots,z_k)\mapsto\prod_{(i,j,g)}\Gamma_{i,j,g}$ is $Z_{\Sn_n}(\gamma)$-invariant. Hence for 
the purposes of integration, we may allow ourselves to integrate simply over $\T^k$. Moreover,
there are many cancellations between the $\Gamma_{i,j,g}$ for a fixed pair $i<j$ as $g$ varies.
Indeed, putting $q_{ij}=q^{|g_i-g_j|}$, $q^{ij}=q^{ g_i+g_j+1}$, and
\[
\Gamma^{ij}:=\frac{(z_i-q_{ij}z_j)(z_i-(q_{ij})^{-1}z_j)}{(z_i-q^{ij}z_j)(z_i-(q^{ij})^{-1}z_j)}
\]
we have
\[
\prod\Gamma_{i,j,g}=\Gamma^{ij},
\]
where the product is taken over all integers $g$ appearing in triples $(i,j,g)$ for $i<j$ fixed. We
set
\[
c_M:=\prod_{(i,j,g)}q^{2g+1}\cdot\prod_{i=1}^{k}\frac{q^{l_i^2-l_i}(q-1)^{l_i}}{l_i(q^{l_i}-1)}\cdot q^{\frac{n-n^2}{2}},
\]
where the first product is taken over $(i,j,g)$ such that $1\leq i<j\leq k$ and $|g_i-g_j|\leq g\leq 
g_i+g_j$.
\subsection{Beyond type $A$: the Plancherel formula following Opdam}
\label{subsection plancherel following Opdam}
Beyond type $A$, we still have available Opdam's explicit Plancherel formula for the Iwahori-Hecke algebra
\cite{OpdamSpectral}. 
Let $\G$ be a connected reductive algebraic group defined and split over $F$, of Dynkin type other than type $A$ (to avoid redundancy). Let $f$ be an Iwahori-biinvariant Schwartz function on $G$ and let $M$
be a Levi subgroup of $G$. Given a parabolic subgroup $\PP$ of $\G$, let $R_{1,+}$ and $R_{P,1,+}$ be defined as in \cite{OpdamSpectral}, Section 2.3. Recall that the group of unramified characters of a Levi 
subgroup $M$ has the structure of a complex torus, and is in fact a maximal torus of $\mathbf{M}^\vee(\C)$. 
In particular, if $P$ is a parabolic subgroup and $\alpha$ is a root of $(M_P,A_P)$, then it makes
sense to write $\alpha(\nu)$ for any unramified character $\nu$ of $M$. Then, altering Opdam's notation to match our own from Section \ref{subsection Plancherel formula for GLn}, the Plancherel formula reads
\begin{theorem}[\cite{OpdamSpectral} Thm. 4.43]
\begin{equation}
\label{Opdam integral general}
f_{M,\Oo}(1)=\frac{q^{-\ell(w^P)}}{\# \stab{W_M}{\Oo}}\int_{\Oo}d(\omega)\prod_{\alpha^\vee\in R_{1,+}\setminus R_{P, 1,+}}\frac{|1-\alpha^\vee(\nu)|^2}{|1+q^\frac{1}{2}_{\alpha}\alpha^\vee(\nu)^{1/2}|^2|1-q^\frac{1}{2}_{\alpha}q_{2\alpha}\alpha^\vee(\nu)^{1/2}|^2}\trace{\pi}{f}\,\mathrm{d}\omega,
\end{equation}
where $\pi=\Ind_P^G(\omega\otimes \nu)$, $P\supset M$, and where $q_\alpha$ and $q_{2\alpha}$ are powers of 
$q$, and $w^P$ is the longest element in the complement $W^P$ to the parabolic subgroup $W_P$ of $W$.
\end{theorem}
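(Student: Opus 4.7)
My plan is to follow Opdam's strategy by specializing Harish-Chandra's abstract Plancherel theorem to the Iwahori-Hecke algebra. The starting point is the decomposition recalled in Section \ref{subsection Plancherel formula for GLn}, which expresses $f(1)$ for any $f\in\Cc(G)$ as a sum over association classes $[P,\Oo]$ of integrals over $\Oo$ of $\trace{\pi}{f}$ against the density $\mu_{G/M}(\omega)d(\omega)$, weighted by constants $c(G/M)^{-2}\gamma(G/M)^{-1}\#\stab{W_M}{\Oo}^{-1}$. For Iwahori-biinvariant $f$, the trace $\trace{\pi}{f}$ vanishes unless $\pi^I\neq 0$, so by Borel's equivalence only tempered $\HH|_{\bq=q}$-modules contribute, and the analysis reduces to the spectral theory of the Iwahori-Hecke algebra itself.

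The core of the argument is then to compute $c(G/M)^{-2}\gamma(G/M)^{-1}\mu_{G/M}(\omega\otimes\nu)$ explicitly as a rational function of $\nu$. Using the Bernstein presentation of $\HH$ and Lusztig's formulas for the commutation of $T_{s_\alpha}$ with $\theta_\lambda$, I would write the normalized intertwining operators $R_{s_\alpha}$ attached to simple reflections $s_\alpha\in W^P$ as explicit rational expressions in $\alpha^\vee(\nu)^{1/2}$, whose numerator has a factor $(1-\alpha^\vee(\nu))$ and whose denominator contributes the two factors involving $q_\alpha$ and $q_{2\alpha}$ in the statement. Composing these along a reduced expression for $w^P$ and computing $R_{w^P}^*R_{w^P}$ on the Iwahori line in $\Ind_P^G(\omega\otimes\nu)$ produces a scalar whose reciprocal, up to the prefactor $q^{-\ell(w^P)}$ that comes from Macdonald's formula for the spherical vector, equals the product over $\alpha^\vee\in R_{1,+}\setminus R_{P,1,+}$ appearing in the statement. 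Matching with the abstract Plancherel density then gives the theorem.

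The combinatorial factor $\#\stab{W_M}{\Oo}^{-1}$ accounts for the redundancy in parametrizing tempered representations by unramified twists, and the square-root branches $\alpha^\vee(\nu)^{1/2}$ are pinned down by demanding that $R_{w^P}$ be unitary on the unitary tempered spectrum. The main obstacle I anticipate is identifying the intrinsic Hecke-algebraic Plancherel measure, built inside $\HH$ by residue calculus on the compact torus of unramified unitary characters, with Harish-Chandra's analytic Plancherel measure. Opdam bridges this gap by an independent residue computation, pinning down all normalizations by comparison at the Steinberg representation, whose formal degree is known explicitly from Theorem \ref{thm Borel formal degrees}, and by matching Euler-Poincar\'{e} and Iwahori normalizations of Haar measure as in the remarks following Theorem \ref{FOS theorem}.
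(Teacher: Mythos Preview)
The paper does not prove this statement: it is quoted verbatim as \cite{OpdamSpectral}, Thm.~4.43, and used as a black box. There is therefore no ``paper's own proof'' to compare your proposal against. Your sketch is a reasonable outline of how Opdam's argument proceeds (intertwiners in the Bernstein presentation, residue calculus on the compact torus, normalization via the Steinberg formal degree), but for the purposes of this paper the theorem is simply an input, and you need not supply any argument at all beyond the citation.
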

Note that whenever $q_{2\alpha}=1$, which holds whenever $\alpha^\vee\not\in 2X_*$, the factor for $\alpha$ 
reduces to 
\begin{equation}
\label{eqn opdam factor simplification}
\frac{|1-\alpha^\vee(\nu)|^2}{|1-q_\alpha\alpha^\vee(\nu)|^2}.
\end{equation}
In types $A$ (as we have used above) and $D$, this simplification always occurs. In types $B$ and $C$,
it happens for all roots except $\alpha=2\varepsilon_i\in R_{1,+}(B_n)$ and $\alpha=4\varepsilon_i\in R_{1,+}(C_n)$, where $\varepsilon_i$ is the character $\diag(a,\dots, a_n)\mapsto a_i$.

For explicit evaluation we rewrite \eqref{Opdam integral general} in coordinates as follows. Recalling the 
setup of Section \ref{subsubsection Unramified characters}, a chose basis $\{\beta_i^\vee\}$ of the coweight lattice 
of $\G$. Then we obtain coordinates $z_i$, such that if $\alpha^\vee=\sum_{i}e_i\beta_i^\vee$, then the 
factor in \eqref{Opdam integral general} labelled by $\alpha^\vee$ is 
\begin{equation}
\label{eqn Opdam factor in coordinates}
\frac{|1-z_1^{e_1}\cdots z_n^{e_n}|^2}{|1+q^\frac{1}{2}_{\alpha}(z_1^{e_1}\cdots z_n^{e_n})^{1/2}|^2|1-q^\frac{1}{2}_{\alpha}q_{2\alpha}(z_1^{e_1}\cdots z_n^{e_n})^{1/2}|^2}.
\end{equation}
When integrating, the coordinates
$z_i$ are restricted to the residual coset corresponding to $\Oo$, in the sense of \cite{OpdamSpectral}. 
For $\G=\GL_n$, we used the basis afforded by the characters $\varepsilon_i$.
\subsection{Regularity of the trace}
\label{section reg trace}
In order to extract information about the expansion of the elements $t_w$ in terms of the 
basis $T_x$ via the Plancherel formula, we must establish a regularity property
of $\trace{\pi}{f_w}$ where $\pi$ is an irreducible tempered representation of $G$.
The needed property follows trivially from Theorem \ref{BK nsp theorem}.
\subsubsection{Intertwining operators}
\label{subsection intertwining operators}
The goal is to use the property that elements of $\mathcal{E}^I_J(G)$ commute with all intertwining operators 
in $\mathcal{M}_t(G)$, and regularity of the trace for unitary and non-strictly positive characters of $M$ to 
deduce regularity of the trace at all characters of $M$.

Let $\omega$ be a discrete series representation of a Levi subgroup $M$ of $G$, and let 
$\nu$ be any unramified character of $M$, not necessarily unitary. Then we may form the representation
$\pi=\Ind_P^B(\nu\otimes\omega)$ of $G$, where $P$ is a parabolic with Levi factor $M$.
We will now recall some well-known facts about the action of the Weyl group of $M$ on
such representations $\pi$. Let $\theta$ and $\theta'$ be two subsets of $\Delta$ corresponding to 
Levi subgroups $M$ and $M'$. Let $w\in W$ be such that $w\theta=\theta'$. Then there is 
an intertwining operator 
\[
J_{P|P'}(\omega, \nu)\colon\Ind_P^B(\nu\otimes\omega)\to \Ind_{P'}^B(\nu\otimes\omega)
\]
for each $w\in W(\theta,\theta')=\sets{w\in W}{w(\theta)=\theta'}$. For $\G=\GL_n$, this set is nonempty only 
if
\[
M_\theta=\GL_{l_1}\times\cdots\times\GL_{l_N}~~M_{\theta'}=\GL_{l'_1}\times\cdots\times\GL_{l'_N}
\]
and $\{l_1,\dots, l_N\}=\{l_1'\,\dots, l'_N\}$ are equal multisets. 
In this case , $W(\theta,\theta')\simeq\Sn_N$ can be viewed as acting by permuting the blocks of $M$. It is well-known that $J_{P|P'}(\omega, \nu)$
is a meromorphic function of $\nu$ with simple poles. The poles of these operators have been studied by Shahidi in \cite{Shahidi}, and in the language of modules over the full Hecke algebra by Arthur in \cite{Art}. 
The results will be stated for certain renormalizations $A(\nu, \omega, w)$ of the operators $J_{P|P'}(\omega, \nu)$ as explained in equation (2.2.1) of \cite{Shahidi}.

\subsubsection{Conventions on parabolic subgroups}

We now recall the notation of Shahidi \cite{Shahidi}. Given a subset $\theta$, we set 
$\Sigma_\theta=\spn_\R\theta$, and $\Sigma_\theta^+=\Psi^+\cap\Sigma_\theta$ 
and likewise define $\Sigma_\theta^-$. Let $\Sigma(\theta)$ be the roots of 
$(P,A_P)$. Define the positive roots $\Sigma^+(\theta)$ to be the roots obtained 
by restriction of an element of $\Psi^+\setminus\Sigma_\theta^-$. 

Given two subsets $\theta,\theta'\subset\Delta$, following \cite{Shahidi} we set 
\[ 
W(\theta,\theta')=\sets{w\in W}{w(\theta)=\theta'},
\]
and then for $w\in W(\theta,\theta')$, we define
\[
\Sigma(\theta,\theta',w)=\sets{[\beta]\in\Sigma^+(\theta)}{\beta\in\Psi^+-\Sigma_\theta^+~\text{and}~w(\beta)\in\Psi^-},
\]
and then
\[
\Sigma^\circ(\theta,\theta',w):=\sets{[\beta]\in \Sigma(\theta,\theta',w)}{w_{[\beta]}\in W(A_P)}.
\]
\begin{rem}
In \cite{Shahidi} additional care about the relative case is taken in the notation. In our simple case 
this is of course unnecessary, and we omit it.
\end{rem}

In the case $\G=\GL_n$, this specializes as follows.
Let $\alpha_{ij}\colon\diag(t_i)\mapsto t_it_j^{-1}$ be characters of $T$. The $\alpha_{ij}$ for $j>i$ are 
the positive roots of $(\B,\Aa)$ and $\beta_i:=\alpha_{ii+1}$ are our chosen simple roots. 
If $P$ corresponds to the partition $n_1+\cdots +n_p$ of $n$ and subset $\theta\subset\Delta$, then 
\[
\Sigma(\theta)=\spn{\{\beta_{n_1},\beta_{n_1+n_2},\dots\}},
\]
where we view $\beta_i$ as restricted to $\aA_P\into\aA_G$. Note that all the positive roots $\Sigma^+(\theta)$ of $(P_\theta, N_\theta)$ are in $N_P$. Denoting by $[\alpha]$ the coset representing a root $\alpha$ of $G$ restricted to $P$,
the positive roots in $N_P$ are the $\alpha_{ij}$ such that $[\alpha_{ij}]=[\beta_{n_1+\cdots+n_k}]$.
\begin{ex}
\label{shahidi notation example}
If $\G=\GL_6$ and $P$ be the parabolic subgroup of block upper-triangular matrices
%
%\[
%\begin{pmatrix}
%* & * & * & * & * & * \\ 
%* & * & * & * & * & * \\ 
% &  & * & * & * & * \\ 
% &  & * & * & * & * \\ 
% &  &  &  & * & * \\ 
% &  &  &  &  & *
%\end{pmatrix} 
%\]
%
corresponding to the partition $6=2+2+1+1$ and $\theta=\{\alpha_{12}, \alpha_{34}\}$, then
we have 
\[
A_P=\sett{\diag(t_1,t_1,t_2,t_2,t_3,t_4)}. 
\]
The positive simple roots are $\Sigma^+(\theta)=\{[\beta_2],
[\beta_4],[\beta_5]\}$. The Weyl group $W(A_P)\simeq\Sn_2\times\Sn_2$ acts by permuting the blocks.
Note that the simple reflection sending $\beta_4\mapsto -\beta_4$ does not arise by permuting the 
blocks (\textit{i.e.} $\begin{pmatrix}
4 & 5
\end{pmatrix}$ does not send blocks to blocks) , hence $w_{[\beta_4]}\not\in W(A_P)$. Hence for any $w\in W(\theta,\theta')$ we have
$\Sigma^\circ(\theta,\theta',w)\subseteq\sett{[\beta_2],[\beta_5]}$.
\end{ex}
\subsubsection{Regularity of the trace}
\label{subsection regularity of trace}
We have the following information about the poles of intertwining operators, due, according to \cite{Shahidi}, 
to Harish-Chandra:
\begin{theorem}[\cite{Shahidi}, Theorem 2.2.1]
\label{thm Shahidi entire}
Let $\omega$ be an irreducible unitary representation of $M$. Say that $\omega$
is a subrepresentation of $\Ind_{P_*}^M(\omega_*)$ for a parabolic subgroup $P_*=M_*N_*$
and $\omega_*$ is an irreducible supercuspidal representation of $M_*$. Let $\theta_*\subset\Delta$
be such that $P_*=P_{\theta_*}$ as parabolic subgroups of $M_*$. 

Then the operator
\[
\prod_{\alpha\in\Sigma^0_r(\theta_*,w\theta_*,w)}(1-\chi_{\omega,\nu}^2(h_\alpha))A(\nu,\omega,w)
\]
is holomorphic on $\mathfrak{a}_{\theta\C^*}$. Here $\chi_{\omega,\nu}$ is the central character of the twisted representation 
$\omega\otimes q^{\pair{\nu}{H_\theta(-)}}$.
\end{theorem}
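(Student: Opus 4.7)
The plan is to reduce the holomorphy statement to a sequence of rank-one calculations and then invoke Harish-Chandra's explicit analysis of rank-one intertwining operators. First I would write $w$ as a reduced product of simple reflections $w = s_{\alpha_k}\cdots s_{\alpha_1}$ in $W(\theta_*, w\theta_*)$ with lengths additive along the path, so that each partial product $w_j = s_{\alpha_j}\cdots s_{\alpha_1}$ moves $\theta_*$ to some intermediate subset $\theta_j \subset \Delta$. The multiplicativity (``cocycle") property of the normalized intertwining operators gives
\[
A(\nu, \omega, w) = A(w_{k-1}\nu, w_{k-1}\omega, s_{\alpha_k}) \cdots A(w_1 \nu, w_1 \omega, s_{\alpha_2}) A(\nu, \omega, s_{\alpha_1}),
\]
so it suffices to control the poles of each rank-one factor $A(w_{j-1}\nu, w_{j-1}\omega, s_{\alpha_j})$.

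Next I would analyze each rank-one factor. In the rank-one situation, the intertwining operator acts through a Levi subgroup $L_\beta$ of semisimple rank one generated by $M_*$ and the root subgroup for $[\beta]$, with $[\beta]\in\Sigma^+(\theta_{j-1})$ being the restricted root along which $s_{\alpha_j}$ acts. Harish-Chandra's rank-one theory expresses $A(\cdot, \cdot, s_{[\beta]})$ in terms of a $c$-function on $L_\beta$ and gives an explicit meromorphic continuation whose only poles lie on the divisor $1 - \chi_{\omega,\nu}^2(h_\beta) = 0$, and only when the reflection $w_{[\beta]}$ actually preserves the inducing data (\emph{i.e.} lies in $W(A_P)$); otherwise the rank-one operator is regular because the corresponding principal series on $L_\beta$ is already irreducible. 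This is precisely the condition carving out $\Sigma^0_r(\theta_*, w\theta_*, w)$ from $\Sigma(\theta_*, w\theta_*, w)$. Assembling the rank-one contributions, the product of factors $\prod_{[\beta]\in \Sigma^0_r(\theta_*, w\theta_*, w)}(1 - \chi_{\omega,\nu}^2(h_\beta))$ kills all the divisors appearing in the composition.

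To finish, I would verify that distinct simple reflections in the decomposition of $w$ contribute \emph{distinct} roots to $\Sigma^0_r$, so that no pole is undercounted. This is a standard combinatorial fact: the set $\{\beta > 0 : w^{-1}\beta < 0\}$ is in bijection with the roots flipped at each step of a reduced expression, and the restriction map $\beta \mapsto [\beta]$ to $\Sigma^+(\theta_*)$ is injective once one restricts to the relevant set. One then checks that the central character twist by $w_{j-1}$ transports the holomorphy statement from $A(\cdot,\cdot,s_{\alpha_j})$ at the appropriate point back to the original variable $\nu$, which is automatic since $h_\beta$ transforms equivariantly.

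The main obstacle will be the careful bookkeeping in the rank-one reduction: one must ensure the normalization factors (the Plancherel and $\epsilon$-factors hidden in passing from the unnormalized $J_{P|P'}$ to the normalized $A(\nu,\omega,w)$) are themselves holomorphic under our hypotheses, so that the only surviving poles come from the genuine reducibility divisors $1 - \chi_{\omega,\nu}^2(h_\beta)$. This requires invoking the properties of Shahidi's local coefficients (or equivalently Harish-Chandra's $\mu$-function) to absorb all poles that do not correspond to elements of $\Sigma^0_r$; once this normalization is verified, the product formula above gives the claimed holomorphy on all of $\mathfrak{a}_{\theta\C^*}^*$.
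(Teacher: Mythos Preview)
The paper does not prove this theorem: it is quoted as Theorem~2.2.1 of \cite{Shahidi} (and attributed there to Harish-Chandra), and is used as a black box to deduce Lemma~\ref{Lemma trace is regular}. There is therefore no ``paper's own proof'' to compare against.

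That said, your sketch follows the standard strategy that Shahidi himself uses: factor $A(\nu,\omega,w)$ along a reduced expression for $w$ via the cocycle relation, reduce to rank-one intertwining operators, and invoke Harish-Chandra's rank-one analysis to locate the poles on the divisors $1-\chi_{\omega,\nu}^2(h_\beta)=0$ for $[\beta]\in\Sigma^0_r$. Your identification of the main obstacle is also correct: the delicate point is controlling the normalization factors so that only the genuine reducibility divisors survive, and this is exactly where Shahidi's local coefficients (equivalently, Harish-Chandra's $\mu$-function) enter. So as a roadmap your proposal is sound, but since the present paper simply cites the result, no comparison is possible.
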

In particular for the purposes of the Plancherel formula, the 
only relevant $\omega$ are unitary, hence have unitary central characters. Therefore $A(\nu,\omega, w)$ is 
holomorphic at $\nu$ if $|q^{\pair{\nu}{H_\theta(-)}}|\neq 1$, or equivalently if 
\[
\Re(\pair{\nu}{H_\theta(-)})\neq 0.
\]
In particular, there is a finite union of hyperplanes away from which each operator
$A(\nu,\pi,w)$ is holomorphic, for any $w\in W$.
\begin{lem}
\label{Lemma trace is regular}
Let $M$ be a Levi subgroup of $G=\G(F)$ and let $\omega$ be a discrete series representation of $M$.
Let $P=M_PA_PN_P$ be the standard parabolic subgroup containing $M=M_P$ and let $k=\rk A_P$. 
Let $z_1,\dots, z_k\in(\C^\times)^k= X^*(A_P)\otimes_\Z\C$ define an unramified quasicharacter 
$\nu=\nu(z_1,\dots, z_k)$ of $A_P$ as in Section \ref{subsection HC canonical measure}. Let 
$\pi=\Ind_P^G(\omega\otimes\nu)$. Let $f\in J$. Then
\[
\trace{\pi}{f}\in\C[z_1,\dots,z_k,z_1^{-1},\dots, z_k^{-1}].
\]
That is, the trace is a regular function on $(\C^\times)^k$.
\end{lem}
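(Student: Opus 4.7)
The plan is to upgrade the chamber-regularity provided by Theorem \ref{BK nsp theorem} to regularity on all of $(\C^\times)^k$ using Weyl-group equivariance.

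First, I apply item 3 of Theorem \ref{BK nsp theorem} to $f \in J$. For each $\omega \in \mathcal{E}_2(M)$ and $\pi_\nu := \Ind_P^G(\omega \otimes \nu)$, the endomorphism $\pi_\nu(f)$ of the finite-dimensional space $\pi_\nu^I$ is a rational function of the coordinates $z_1, \ldots, z_k$ on $X^*(A_P) \otimes_\Z \C \simeq (\C^\times)^k$, regular on the closed non-strictly positive chamber
\[
\bar U := \{\nu : \nu^{-1}~\text{is non-strictly positive}\}.
\]
Taking trace, $t_\omega(\nu) := \trace{\pi_\nu}{f}$ is a rational function of $z_1, \ldots, z_k$ regular on $\bar U$.

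Second, I exploit Weyl equivariance. For each $w \in W(A_P) := N_G(A_P)/M$, the standard intertwining operator theory recalled in Section \ref{subsection intertwining operators}, together with the change-of-variables isomorphism in parabolic induction, provides a rational family of intertwiners $\Ind_P^G(\omega \otimes \nu) \to \Ind_P^G(w\omega \otimes w\nu)$ that are isomorphisms for generic $\nu$. Taking characters yields the identity of rational functions $t_\omega(\nu) = t_{w\omega}(w\nu)$. Applying the first step to $t_{w\omega}$ and pulling back along $\nu \mapsto w\nu$ shows that $t_\omega$ is regular on $w^{-1}(\bar U)$.

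Third, I observe that $W(A_P)$ acts as a finite reflection group on $X^*(A_P) \otimes_\Z \R$, whose closed Weyl chambers tile this real vector space, and that the compact torus $\T^k \subset (\C^\times)^k$ is $W(A_P)$-stable. Since $\bar U$ has the form $\bar C \times \T^k$ in the polar decomposition $(\C^\times)^k \simeq (\R_{>0})^k \times \T^k$, for $\bar C$ the corresponding closed real chamber, the Weyl translates satisfy
\[
\bigcup_{w \in W(A_P)} w^{-1}(\bar U) = (\C^\times)^k.
\]
The polar divisor of $t_\omega$ is Zariski-closed and disjoint from each $w^{-1}(\bar U)$, hence empty. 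Therefore $t_\omega \in \C[z_1^{\pm 1}, \ldots, z_k^{\pm 1}]$. The main obstacle is making the Weyl-equivariance of characters precise (there is some bookkeeping for the different parabolics sharing the Levi $M$) and verifying the tiling; both are standard consequences of the structure of parabolic induction and the $R$-group formalism.
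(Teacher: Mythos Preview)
Your argument is correct and follows essentially the same strategy as the paper: rationality from Theorem~\ref{BK nsp theorem}, Weyl-group symmetry of the trace, and propagation of regularity from the non-strictly positive chamber to all of $(\C^\times)^k$. The only notable difference is in how the Weyl symmetry is established. The paper shows $W$-\emph{invariance} $t_\omega(\nu)=t_\omega(w\nu)$ by first using Theorem~\ref{thm Shahidi entire} to get the identity on an open set of unitary characters and then analytically continuing variable-by-variable; you instead use the more general $W$-\emph{equivariance} $t_\omega(\nu)=t_{w\omega}(w\nu)$, which holds as an identity of rational functions by generic invertibility of the intertwiners, and then apply Theorem~\ref{BK nsp theorem} separately to each $t_{w\omega}$. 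Your formulation is arguably cleaner, since it sidesteps the analytic continuation and is transparently correct even when $w\omega\not\simeq\omega$.
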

\begin{proof}
We know \textit{a priori} that
\[
\trace{\pi}{f}\in\C(z_1,\dots, z_k)
\]
is a rational function of $\nu$, as the operator $\pi(f)$ itself depends rationally on the variables $z_i$
by Theorem \ref{BK nsp theorem}. Therefore
\[
\trace{\pi}{f}=\frac{p(\nu)}{h(\nu)}\in\C(z_1,\dots,z_k).
\]
By Theorem \ref{thm Shahidi entire} and the discussion following it, 
there is an open subset $U$ of the unitary characters, such that
for all $\nu\in U$, and all $w\in W$ we have
% $\nu$ in an open subset of the unitary characters
%$U(1)^k$, we have, for every each $w\in W$,
%
\begin{equation}
\label{eq trace W-equivariant rational}
\frac{p(\nu)}{h(\nu)}=\frac{p(w(\nu))}{h(w(\nu))}.
\end{equation}
Holding $z_2,\dots, z_n$  constant and in $U$, \eqref{eq trace W-equivariant rational} becomes an equality of meromorphic functions
of $z_1$ that holds on a set with an accumulation point, and hence
\eqref{eq trace W-equivariant rational}
holds for all $z_1\in\C^\times$. Now holding $z_1\in\C^\times$
constant and arbitrary, and $z_3,\dots, z_k$ constant and in $U$,
we see that \eqref{eq trace W-equivariant rational} holds also
for all $z_2\in\C^\times$.
Therefore \eqref{eq trace W-equivariant rational} actually holds
for all $\nu$, \textit{i.e.} $\trace{\pi}{f}$ is a $W$-invariant rational function of $\nu$.

When $\nu$ is non-strictly positive with respect to $M$, by Theorem \ref{BK nsp theorem}, $\trace{\pi}{f}$ has poles only of the form $z_i^{n_i}=0$.
The claim now follows from the $W$-invariance, and thus $\trace{\pi}{f}$ is a regular function 
on $(\C^\times)^k$.
\end{proof}
%
%\todo[inline]{Prefer the following argument as prompted by Dennis' questions:
%In the rank one case, have 
%%
%\[
%\frac{p(\nu)}{h(\nu)}=\frac{p(w\nu)}{h(w\nu)}
%\]
%%
%for $\nu$ on the unit circle. Hence these are two meromorphic functions on $\C$ that agree on a 
%set containing an accumulation point. This means that they agree everywhere. 
%
%Now consider the higher rank case. Then
%%
%\[
%\frac{p(w(z_1,\dots, z_n))}{h(w(z_1,\dots, z_n))}=\frac{p(z_1,\dots, z_n)}{h(z_1,\dots, z_n)}
%\]
%%
%on $U(1)\times\cdots\times U(1)$. Hold $z_2,\dots, z_n$ to be constant and unitary. Then we 
%again have meromorphic functions of one variable that agree on a set with an accumulation point. 
%Then the two functions agree on $\C^\times\times U(1)\times\cdots\times U(1)$. Here
%$U(1)=\mathbb{T}$. Now do the same for the next coordinate to get invariance. Once have invariance
%conclusion follows.
%}
%
We will therefore allow ourselves to write $\trace{\pi}{f}$ for functions $f\in J$ even for $\nu$ such that
the operator $\pi(f)$ itself is not defined.
\begin{lem}
\label{lem trace td is constant}
Let $d\in\Waff$ be a distinguished involution and $\pi$ be a tempered representation induced from one of the Levi subgroups attached to the two-sided cell containing $d$ in the sense of Proposition \ref{prop plancherel decomposition compatible with cell decomposition}. Then $\trace{\pi}{f_d}$ is constant and a natural number. In fact, the same is true for any idempotent in $J$.
\end{lem}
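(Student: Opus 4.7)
The strategy is to combine the fact that $t_d$ is an idempotent in $J$ with the rigidity of integer-valued regular functions on a complex torus.

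Since $d$ is a distinguished involution, $t_d$ is an idempotent in $J$ (in fact, in the summand $J_{\cc}$ corresponding to the two-sided cell $\cc$ containing $d$). For any unitary unramified character $\nu$ of $M = M_P$, the induced representation $\pi(\nu) = \Ind_P^G(\omega \otimes \nu)$ is tempered by Harish-Chandra's theorem, so by Theorem \ref{BK nsp theorem}(1) the $H$-action on $\pi(\nu)^I$ extends uniquely to an algebra action $\eta_{\pi(\nu)}\colon J \to \End_\C(\pi(\nu)^I)$. Because $\eta_{\pi(\nu)}$ is an algebra homomorphism and $t_d^2 = t_d$, the operator $\pi(\nu)(f_d) = \eta_{\pi(\nu)}(t_d)$ is idempotent, and its trace therefore equals its rank, a non-negative integer.

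By Lemma \ref{Lemma trace is regular}, the function $\nu \mapsto \trace{\pi(\nu)}{f_d}$ is a Laurent polynomial in the coordinates $z_1,\ldots,z_k$ parameterizing the complex torus $\X(M)$. By the previous paragraph, this Laurent polynomial takes non-negative integer values on the compact real torus $\mathrm{Im}\,\X(M) \simeq (S^1)^k$ of unitary unramified characters. Being continuous and integer-valued on the connected torus $(S^1)^k$, it must be constant there; and a Laurent polynomial on $(\C^\times)^k$ whose restriction to $(S^1)^k$ is constant is in fact globally constant (the restriction is just the Fourier expansion of the polynomial, so all non-constant Fourier coefficients must vanish). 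This proves both claims for $t_d$.

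The same argument applies verbatim to any idempotent $e \in J$, using only that $e^2 = e$ forces $\eta_{\pi(\nu)}(e)$ to be idempotent on $\pi(\nu)^I$ for unitary $\nu$. The only nontrivial input beyond standard observations is Lemma \ref{Lemma trace is regular}, which extends regularity of the trace from non-strictly positive $\nu$ to all of $\X(M)$; after this, the proof reduces to the elementary rigidity statement about Laurent polynomials on the complex torus, so no real obstacle remains.
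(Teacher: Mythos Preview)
Your proof is correct and follows essentially the same approach as the paper: the paper's argument is simply that $\trace{\pi}{f_j}=\rank(\pi(f_j))$ for any idempotent $j$, the trace is continuous in $\nu$ by Lemma~\ref{Lemma trace is regular}, and the lemma follows from connectedness of $\T$. You have spelled this out in more detail, and added the (correct, but not needed in the sequel) observation that a Laurent polynomial constant on the compact form of the torus is globally constant via the Fourier expansion.
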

\begin{proof}
Let $j$ be an idempotent in $J$.
We have $\trace{\pi}{f_j}=\rank(\pi(f_j))$. The trace is continuous in $\nu$ by Lemma 
\ref{Lemma trace is regular}, and the present lemma follows as $\T$ is connected.
%
%
% and moreover we have that
%%
%\[
%\sum_{d'\in\cc}\rank(\pi(f_{d'}))=\rank\left(\sum_{d'\in\cc}\pi(f_{d'})\right)=\dim\pi,
%\]
%%
%where the sum is over all distinguished involutions contained in $\cc$, and $\dim\pi$ is independent of $\nu$. 
%Therefore on one hand, $\rank(\pi(f_d))$ is lower semicontinuous in $\nu$, and on the other hand,
%%
%\[
%\rank(\pi(f_d))=\dim\pi-\sum_{\substack{d'\in\cc \\ d'\neq d}}\rank(\pi(f_{d'}))
%\]
%%
%is upper semicontinuous. Therefore $\rank(\pi(f_d))$ is continuous in $\nu$ and the lemma follows
%as $\T$ is connected.
\end{proof}
%
%It seems probable that when $d$ is the distinguished involution corresponding to the canonical 
%one-sided cell of $\cc$ (see \cite{LX}), $\rank(f_d)=1$. This is not true in general.
%For example, for $G=\Sp_4(F)$ there are four two-sided 
%cells, and each corresponds to a unique association class of Levi subgroups.
%The Langlands dual of the subgroup $M_h$ (see Section \ref{section Sp4 example}) corresponds to the 
%subregular cell, which contains three distinguished involutions. However, the tempered representation
%$\pi=\nu_x\times\St_{\Sp_2}$ with discrete support $M_h$ (see \cite{AKSp4}) has $\dim\pi^I=4$.
%By Corollary \ref{cor BK map injective} below, $\rank(f_d)>0$ for all $d$, so there is a unique 
%distinguished involution with rank $2$.
%
\section{Proof of Theorem  \ref{thm general G denominators} for general $\G$, and the case of $\GL_n$}
\label{section proofs}
\subsection{The functions $f_w$ for $\GL_n$}
\label{subsection the functions fw for GLn}
In this section, $q>1$.

To compute with the Plancherel formula, 
we will need to apply the residue theorem successively in each variable $z_i$, and in doing so will
we will need to sum over a certain tree that will track, for each variable, at which residues we evaluated.
Upon integrating with respect to each variable $z_i$, we will have poles of the form $z_i=0$
or $z_i=(q^{ij})^{-1}z_j$. For example, if we have $4$ variables $z_1,z_2,z_3,z_4$ 
corresponding to a Levi subgroup $\GL_{l_1}\times\GL_{l_2}\times\GL_{l_3}\times\GL_{l_4}$,
then some of the summands obtained by successively applying the residue theorem are
labelled by paths on the tree
\begin{center}
\begin{forest}
  [{$z_1=0$} 
    	[{$z_2=0$} 
  		[{$z_3=0$}] [{$z_3=(q^{34})^{-1}z_4$}]
  	  	] 
  	  	[{$z_2=(q^{23})^{-1}z_3$} 
  		[{$z_3=0$}] [{$z_3=(q^{34})^{-1}z_4$}]
  	  	] 
  	[{$z_2=(q^{24})^{-1}z_4$}
  	[{$z_4=0$}] [{$z_4=(q^{43})^{-1}z_3$}]
  	]  
  ]
\end{forest}
\end{center}
Of course, to evaluate the entire integral for $M$, we would also need to consider trees whose
roots are decorated with $z_1=(q^{12})^{-1}z_2$, and so on, for  a total of four trees.

\begin{dfn}
Given a Levi subgroup $M$ with $N+1$ blocks, a \emph{bookkeeping tree} $T$ for $M$
is a rooted tree with $N$ levels such that the vertices on the $i$-th level below the root
each have $N-i$ child vertices, and each vertex is decorated with an equation of the form
$z_i=0$ or $z_i=(q^{ij})^{-1}z_j$, where the index $j$ does not appear along the path
from the vertex to the root, and the parent root is decorated with an equation
$z_{k}=(q^{ki})^{-1}z_i$ for some $k$. Moreover, we require that the root be decorated with an equation of the form $z_i=0$ or $z_i=(q^{ij})^{-1}z_j$ for $i$ minimal. A \emph{branch} of $T$ is a simple path in $T$
from the root to one of the leaves.
\end{dfn}
\begin{dfn}
Given a branch $B$ of a bookkeeping tree, a \emph{clump in} $B$ is an ordered subset of indices $i$ appearing in the decorations of
successive parent-child vertices, such that no decoration of the form $z_i=0$
occurs along the path from the closest index to the root to the farthest index from the root.
We write $C\prec B$ if $C$ is a clump of $B$. 
\end{dfn}
\begin{ex}
The sets of indices $\{3,4\}$, $\{2,3,4\}$, $\{2,3\}$, $\{2,4\}$, and $\{2,4,3\}$
(note the ordering) are all the clumps of the above tree. The sets $\{1,2,3\}$ and
$\{1,2,4\}$ are not clumps.
\end{ex}
%
%\begin{dfn}
%Given two nonnegative integers $i\neq j$, define $\delta_{i<j}$ to be $1$ if $i<j$ and $0$ otherwise.
%\end{dfn}
%%
%\begin{dfn}
%Let $C=\{i_0,\dots, i_t\}$ be a clump a branch of a bookkeeping tree. Given two consecutive indices $i_k,i_{k+1}$ in $C$, define $\sigma_{i_k,i_{k+1}}=(-1)^{\delta_{i_{k+1}<i_k}}$. We then define
%$\sigma_C=\prod_{k=0}^{t-1}\sigma_{i_k, i_{k+1}}$.
%%
%\end{dfn}
\begin{theorem}
\label{theorem fd(1) integral}
Let $G=\GL_n(F)$ and let $d$ be a distinguished involution such that the two-sided cell 
containing $d$ corresponds to the Levi subgroup $M$. Let $N+1$ be the number of blocks in $M$ such that the $i$-th block has size $l_i$. Let $m_{j}$ be the number $l_i$ that are equal to $j$. Define for $r\leq k$
\[
Q_{rk}=q^{i_{k}i_{k+1}}q^{i_{k-1}i_k}\cdots q^{i_ri_{r+1}}
\]
in the notation of \ref{subsection Plancherel measure for GLn}.
Then if $f_d$ is as in Sections \ref{subsection algebra J as subalgebra of the Schwartz algebra},
%up to an integer multiplicative constant, 
%
\begin{multline}
\label{fd(1) theorem post-cancellation}
f_d(1)=\frac{\rank(\pi(f_d))}{m_1!\cdots m_{n}!}c_M\sum_{\mathrm{trees}~T}\sum_{\mathrm{branches}~B~\mathrm{of}~T}\prod_{\substack{C\prec B \\ C=\{i_0,\dots, i_t\}}}\frac{(1-q^{l_{i_0}})(1-q^{l_{i_1}})}{1-q^{l_{i_0}+l_{i_1}}}
\\
\cdot\prod_{k=1}^{t-1}\frac{(1-q^{l_{i_{k+1}}})}{(1-q^{l_{i_k}+l_{i_{k+1}}})}\prod_{r=0}^{k-1}\frac{R_{rk}}{1-Q_{rk}q^{i_ri_{k+1}}},
\end{multline}
%
%\todo[inline]{Be aware of new definition of $C_M$!}
where
\[
R_{rk}=\begin{cases}
1-Q_{rk}q^{g_{i_r}-g_{i_k}}&\text{if}~k<t-1\\
(1-Q_{r,t-1}q^{g_{i_r}-g_{i_t-1}})(1-Q_{r,t-1}q^{g_{i_k}-g_{i_r}})&\text{if}~k=t-1
\end{cases}.
\]
\end{theorem}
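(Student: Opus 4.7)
The plan is to start from the Plancherel decomposition and reduce the problem to a purely combinatorial residue calculation. Since $d$ lies in the two-sided cell corresponding to $M$, Proposition \ref{prop plancherel decomposition compatible with cell decomposition} (combined with the bijections \eqref{levi}--\eqref{cells} specific to type $A$) tells us that there is exactly one nonzero summand in the Plancherel decomposition of $f_d$, namely $(f_d)_{M,\mathfrak{o}}$ for the orbit $\mathfrak{o}$ of $I$-spherical discrete series of $M$. Evaluating at $1 \in G$, this becomes an integral over $\mathfrak{o}$ of $d(\omega)$ times the Plancherel density of Theorem \ref{theorem AubPly Plancherel measure form} times $\trace{\pi}{f_d}$. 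By Lemma \ref{lem trace td is constant}, the trace is the constant integer $\rank(\pi(f_d))$ and pulls out of the integral; and the stabilizer $\#\stab{W_M}{\mathfrak{o}}$ equals $m_1!\cdots m_n!$ since $W_M$ acts through its quotient permuting blocks of equal size. Absorbing all $M$-dependent prefactors into $c_M$ and using the cancellation collapsing $\prod_g \Gamma_{i,j,g}$ to $\Gamma^{ij}$, the problem reduces to showing
\[
\frac{1}{(2\pi i)^{N+1}}\int_{\T^{N+1}} \prod_{i<j} \Gamma^{ij} \,\frac{dz_1}{z_1}\cdots\frac{dz_{N+1}}{z_{N+1}}
= \sum_{T}\sum_{B\prec T}\prod_{C\prec B}(\cdots),
\]
with the factors as in the statement of the theorem.

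Next I would evaluate this integral by iterated residues in $z_1,z_2,\ldots,z_{N+1}$ in that order. In the $z_1$ integral, the integrand has a pole at $z_1=0$ plus simple poles at $z_1=(q^{1j})^{-1}z_j$ (the factors $z_1 - (q^{1j})^{-1}z_j$ in the denominator of $\Gamma^{1j}$). Choosing which pole to pick up at step $1$, the residue is then integrated in $z_2$, whose new pole structure consists of $z_2=0$ and $z_2=(q^{2j})^{-1}z_j$ for $j$ not yet used, together with a possible new pole created by having substituted $z_1 = (q^{1k})^{-1} z_k$. The book-keeping tree is exactly the decision tree of ``at step $i$, which residue do we take.'' The condition that a parent is decorated $z_k=(q^{ki})^{-1}z_i$ ensures the substitution propagates consistently; the restriction to indices not yet appearing on the path to the root avoids double counting; and the choice of $i$ minimal at the root corresponds to starting the residue calculation with the smallest index whose integral we have not yet done.

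The third step is to read off the residue contribution of a single branch. A clump $C = \{i_0,\ldots,i_t\}$ records a maximal block of consecutive vertices along the branch none of which is decorated $z_{i_r} = 0$; along such a block the substitutions are chained, producing the product $Q_{rk}$ defined in the statement. The factor $(1-q^{l_{i_0}})(1-q^{l_{i_1}})/(1-q^{l_{i_0}+l_{i_1}})$ is the residue from the first $\Gamma^{i_0 i_1}$ pole; each subsequent factor $(1-q^{l_{i_{k+1}}})/(1-q^{l_{i_k}+l_{i_{k+1}}})$ is the residue of the $\Gamma^{i_k i_{k+1}}$ factor after the previous substitutions; and the $R_{rk}$ numerators come from the surviving factors $\Gamma^{i_r j}$ with $j$ in the clump that neither cancel nor get killed at the pole. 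The split of $R_{rk}$ into two cases (last link of the clump versus earlier links) reflects the fact that at the final index $i_t$ the integrand has acquired both the original $(z_{i_r}-q^{g_{i_r}}z_{i_{t-1}})$ and $(z_{i_r}-q^{-g_{i_r}}z_{i_{t-1}})$ numerator factors evaluated at the pole, whereas internally only one such factor survives at each step. A pole decorated $z_i=0$ terminates a clump because the subsequent substitution zeroes out the chain of couplings through $z_i$.

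The main obstacle will be step three: keeping track of which factors of $\prod_{i<j}\Gamma^{ij}$ cancel against each other after a nested sequence of substitutions, and verifying that what survives is exactly the displayed product. I expect the cleanest way to do this is by induction on $N$, processing one variable at a time: after integrating out $z_1$ via a chosen pole, the remaining integrand is (up to the explicit prefactors named above) an expression of the same form but for a smaller Levi, possibly with one block's parameters shifted by the substitution, so the same formula applies to the sub-integral. The combinatorial identities needed at each inductive step are elementary manipulations of the $Q_{rk}$ and $R_{rk}$; the only subtle point is showing that the two cases in the definition of $R_{rk}$ assemble consistently when a clump is extended by one more vertex. The sum over all trees and branches then enumerates all possible sequences of residues, which is exactly the iterated residue theorem applied $N+1$ times.
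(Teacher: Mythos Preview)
Your overall strategy---reduce to the single Levi $M$ via Proposition~\ref{prop plancherel decomposition compatible with cell decomposition}, pull out the constant trace via Lemma~\ref{lem trace td is constant}, identify the stabilizer, and then evaluate the remaining integral $\int_{\T^{N+1}}\prod_{i<j}\Gamma^{ij}\,\frac{dz_0}{z_0}\cdots\frac{dz_N}{z_N}$ by iterated residues organized into bookkeeping trees---is exactly the paper's approach. The reduction steps are fine.

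The gap is in your step three. Your inductive claim, that after integrating out one variable the remaining integrand ``is (up to the explicit prefactors named above) an expression of the same form but for a smaller Levi,'' is false. After substituting $z_{i_0}=(q^{i_0i_1})^{-1}z_{i_1}$, the integrand acquires, besides the surviving $\Gamma^{ij}$'s, an extra product
\[
\prod_{j\neq i_0,i_1}\frac{(z_{i_1}-q^{i_0i_1}q_{i_0j}z_j)(z_{i_1}-q^{i_0i_1}(q_{i_0j})^{-1}z_j)}{(z_{i_1}-q^{i_0i_1}q^{i_0j}z_j)(z_{i_1}-q^{i_0i_1}(q^{i_0j})^{-1}z_j)},
\]
which is not of the form $\prod\Gamma^{ij}$ for any smaller Levi; and after each further step the number of such ``side products'' grows. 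So there is no clean induction on $N$ of the kind you describe. What the paper actually does is track these extra factors explicitly and prove the key technical point you glossed over: at each subsequent step, the apparent new poles contributed by these side products (at $z_{i_k}=q^{i_{k-1}i_k}\cdots q^{i_0i_1}(q^{i_0j})^{-1}z_j$, etc.) are cancelled by zeros in the numerator of the adjacent side product or of some $\Gamma^{ij}$, because the relevant exponent collapses to $g_{i_k}-g_j=\pm$ the exponent in some $q_{i_kj}$. Only the poles coming from the original $\Gamma^{i_ki_{k+1}}$ survive. Without this cancellation, the tree would have many more branches than you allow, and the clump structure would not emerge.

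The paper also organizes the computation differently from what you sketch: it first proves an intermediate ``pre-cancellation'' formula in which each step along a clump contributes a full four-term rational factor, and only afterwards performs the algebraic simplification (telescoping between the $(k,r)$ and $(k-1,r)$ factors) that produces the $R_{rk}$ and the displayed formula. Your attempt to read off the final simplified factors directly from the residue at each step would force you to do both the pole-cancellation argument and the telescoping simultaneously, which is why the case split in $R_{rk}$ looks mysterious in your account. Separating the two, as the paper does, makes each half routine.
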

\begin{cor}
\label{cor denominators of fd(1) poincare}
The denominator of $f_d(1)$ divides a power of the Poincar\'{e} polynomial $P_{\G/\B}(q)=P_{\Sn_{n}}(q)$ of $\G$.
Moreover, when $d$ is in lowest two-sided cell, corresponding to $M=T$, $f_d(1)=\rank(\pi(f_d))/P_{\G/\B}(q)$ exactly.
\end{cor}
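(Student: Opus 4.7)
My plan is to read off both claims directly from formula \eqref{fd(1) theorem post-cancellation}. The first step is a normal-form lemma for the denominators: every factor appearing in the denominator of each summand has the shape $1-q^m$ with $1\le m\le n$, where $m$ is a sum $l_{i_r}+l_{i_{r+1}}+\cdots+l_{i_{k+1}}$ of consecutive parts of the partition $n=l_1+\cdots+l_{N+1}$. This is immediate for the explicit factors $(1-q^{l_{i_0}+l_{i_1}})$ and $(1-q^{l_{i_k}+l_{i_{k+1}}})$, while for the factors $(1-Q_{rk}q^{i_ri_{k+1}})$ it is a short telescoping check using $q^{ij}=q^{g_i+g_j+1}$ and $g_i=(l_i-1)/2$. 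The factors $q^{l_i}-1$ in $c_M$ are also of this form. Since every $q^m-1$ with $m\le n$ divides $\prod_{i=1}^n(q^i-1)=(q-1)^nP_W(q)$, after clearing denominators by $((q-1)^nP_W(q))^N$ the expression becomes a Laurent polynomial in $q$.

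The next step is to replace the factor $(q-1)^{nN}$ by absorbing all $(1-q)$'s into the numerator. I would count $(1-q)$ factors clump by clump. In a single clump of size $t+1$, the numerator contributes $2+(t-1)+\binom{t-1}{2}+2(t-1)$ instances of $(1-q)$, with the doubled factor in $R_{r,t-1}$ being crucial, while the denominator contributes $1+(t-1)+\binom{t}{2}$, a net surplus of exactly $t$ in the numerator. Combining with the $(q-1)^n$ in the numerator of $c_M$ and the $(q^{l_i}-1)$ in its denominator, one finds that the $(1-q)$-adic valuation of each summand is nonnegative, so $f_d(1)$ is regular at $q=1$. The remaining denominator factors are cyclotomic of the form $[m]_q=(q^m-1)/(q-1)$ with $m\le n$, each dividing $[n]_q!=P_W(q)$. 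Thus a sufficient power $P_W(q)^N$ clears the denominator of $f_d(1)$.

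For the second, exact claim I specialize to the lowest cell $\cc_0$, which corresponds under bijections \eqref{levi}--\eqref{cells} to $M=T$, so all $l_i=1$ and $N+1=n$. Proposition \ref{prop plancherel decomposition compatible with cell decomposition} eliminates all other Levi contributions; $\mathcal{E}_2(T)^I$ reduces to the trivial character with $d(\omega)=1$; by Lemma \ref{lem trace td is constant} the trace $\trace{\pi_\nu}{f_d}$ is constant equal to $\rank(\pi(f_d))$ and factors out of the integral; and a direct computation gives $c_M=1$. Setting $l_i=1$ in the clump formula and telescoping the inner products collapses each clump of size $t+1$ into $(1-q)^t/[t+1]_q$. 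The corollary then reduces to the combinatorial identity
\begin{equation*}
\frac{1}{n!}\sum_{T,B}\prod_{C\prec B}\frac{(1-q)^{|C|-1}}{[|C|]_q}=\frac{1}{[n]_q!}=\frac{1}{P_W(q)},
\end{equation*}
which I would verify by induction on $n$, using the recursive structure of bookkeeping trees; the base case $n=2$ gives $\tfrac12\bigl(1+\tfrac{1-q}{1+q}\bigr)=\tfrac1{1+q}$ directly, and the Steinberg formal-degree recorded after Theorem \ref{thm Borel formal degrees} provides an independent consistency check.

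The main obstacle is the clump-by-clump $(1-q)$-accounting in the regularity step of part (1): the vanishing of the pole at $q=1$ is not visible within a single summand until the doubled factor in $R_{r,t-1}$ is exploited, so the argument relies on a precise combinatorial identity rather than a naive bound on valuations. Once that identity is established, both claims follow by elementary manipulations with cyclotomic polynomials.
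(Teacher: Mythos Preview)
Your treatment of part (1) is essentially the paper's argument, carried out more carefully: both read off from \eqref{fd(1) theorem post-cancellation} that every denominator factor has the form $1-q^m$ with $m\le n$, and that the numerator carries enough $(1-q)$'s to ensure regularity at $q=1$. The paper leaves the $(1-q)$-count implicit (it only remarks that $R_{rk}$ is divisible by $1-q$), whereas you make the clump-by-clump surplus explicit; your count is correct and is a genuine improvement in rigour over the paper's somewhat terse sentence.

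For part (2) the paper takes a completely different and much shorter route that you should know about. It does \emph{not} evaluate the integral or the tree sum at all. Instead it observes that for $d\in\cc_0$ the only contributing tempered representations are unramified principal series, which are exactly the $K$-spherical tempered representations. Comparing the Haar measures $\mu_I=P_{\G/\B}(q)\,\mu_K$ gives $\mathrm{d}\pi_{\mu_I}=P_{\G/\B}(q)^{-1}\,\mathrm{d}\pi_{\mu_K}$ on this piece, and by Lemma~\ref{Lemma trace is regular} and the Satake isomorphism there is $h_d$ in the spherical Hecke algebra with $\trace{\pi_I}{f_d}=\trace{\pi_K}{h_d}$; hence
\[
f_d(1)=\int\trace{\pi_I}{f_d}\,\mathrm{d}\pi_{\mu_I}=\frac{1}{P_{\G/\B}(q)}\int\trace{\pi_K}{h_d}\,\mathrm{d}\pi_{\mu_K}=\frac{h_d(1)}{P_{\G/\B}(q)}=\frac{\rank(\pi(f_d))}{P_{\G/\B}(q)}.
\]
This argument is type-independent (the paper reuses it verbatim for general $\G$ in the proof of Theorem~\ref{theorem general G f_w(1) rational function with correct denominator}) and requires no combinatorics.

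Your approach instead reduces part (2) to the identity
\[
\frac{1}{n!}\sum_{T,B}\prod_{C\prec B}\frac{(1-q)^{|C|-1}}{[|C|]_q}=\frac{1}{[n]_q!},
\]
which is equivalent to the type $A$ Macdonald constant-term evaluation $\int_{\T^n}\prod_{i<j}\Gamma^{ij}=n!/[n]_q!$. This is true, but your proposed induction is only a sketch: after taking the residue at $z_0=(q^{01})^{-1}z_1$ the remaining integrand is not literally the $(n-1)$-variable Plancherel density---extra rational factors of the form appearing in \eqref{f_d(1) integral after integrating in two variables} are introduced---so the recursive step needs more than a relabelling, and you have not supplied it. The Haar-measure argument sidesteps this entirely.
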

%

%
%\begin{rem}
%In the case of $\GL_2$, there being only two two-sided cells,and the fact that for the cell $\{1\}$ 
%the Plancherel measure is just the formal degree
%%
%\[
%d(\mathrm{St})=\frac{q-1}{q+1}
%\]
%%
%of $\mathrm{St}_{\GL_2}$, means we can avoid evaluating the integral at all. We have $\dim\mathrm{St}^I=1$ and $\dim\mathrm{\pi}^I=2$ for any unitary unramified principal series $\pi$, so that 
%%
%\[
%1=T_1(1)=\frac{q-1}{q+1}+2\cdot\mathfrak{I}
%\]
%%i
%where $\mathfrak{I}$ is the double integral from Example \ref{example GL2 Plancherel integral}. In higher ranks, the fact that $\trace(\pi(T_1))\neq 0$ no matter
%the Levi that $\pi$ is induced from renders this trick nonviable; the integrals must be computed. 
%
%\todo[inline]{Aubert-Plymen claim that the measure for $M=\GL_2$ for the cell $\{1\}$ for $\GL_2$ should be 
%%
%\[
%d(\mathrm{St})=\frac{q-1}{2(q+1)}
%\]
%%
%and in general that there's a $1/n$ factor for $\GL_{n}$. This isn't true for Borel, but he only works with semisimple $G$. Somewhere here a factor of two should appear in the trace or something.
%}
%\end{rem}
%
\begin{proof}[Proof of Corollary \ref{cor denominators of fd(1) poincare}]
We will show that each of the three forms of denominator that appear in the conclusion of 
Theorem \ref{theorem fd(1) integral} divide $P_{\G/\B}(q)$, and thus that their product divides a power of $P_{\G/\B}(q)$.
The denominators of $c_M$ are all of the form 
$1+q+\cdots+q^{l_{i}-1}$, and so divide $P_{\G/\B}(q)$ as $l_i\leq n$ for all $i$.
Note that as $l_{i_i}+l_{i_{k+1}}\leq n$, the leftmost denominators in \eqref{fd(1) theorem post-cancellation} satisfy the conclusion of the corollary also. Finally,  $Q_{rk}q^{i_ri_{k+1}}=q^{l_{i_{k+1}}+l_{i_k}+\cdots+ l_{i_r}}$, and $R_{rk}$ is likewise always a polynomial in $q$ (as opposed to $q^{-1}$) divisible by $1-q$. Again using that $l_{i_{k+1}}+l_{i_k}+\cdots+ l_{i_r}\leq n$, we are done with the first statement.

We now take up the second statement, the proof\footnote{We thank A. Braverman for explaining to us the observation whose obvious generalization we present below.}
of which does
not require any computations at all and which holds for any $w\in\cc_0$.
Recalling that the only $K$-spherical tempered 
representations of $G$ are principal series representations \cite{MacdonaldZonal}, 
let $\mu_K$ be the Haar
measure on $G$ such that $\mu_K(K)=1$, $\mu_I$ be the Haar measure such that $\mu_I(I)=1$,
and denote temporarily $\mathrm{d}\pi_{\mu}$ the 
Plancherel measure normalized according to a chosen Haar measure $\mu$ on $G$.
Likewise write $\pi_K(f)$ for the Fourier transform with respect to $\mu_K$ and $\pi_I(f)$ for
the Fourier transform with respect to $\mu_I$.

As we have
\[
\mu_I=P_{\G/\B}(q)\mu_K,
\]
and formal degrees scale inversely to the Haar measure, we have
\begin{equation}
\label{eqn Plancherel renormalization lowest cell}
d\pi_{\mu_I}=\frac{1}{P_{\G/\B}(q)}d\pi_{\mu_K}
\end{equation}
for tempered principal series.
Now let $w\in\cc_0$ and consider $f_w=\tilde{\phi}(t_w)$. By Lemma \ref{Lemma trace is regular} and the Satake isomorphism, there is a function
$h_w$ in the spherical Hecke algebra such that for all principal series $\pi$,
\[
\mathrm{trace}\left(\pi_I(f_w)\right)=\mathrm{trace}\left(\pi_K(h_w)\right)
\]
as regular Weyl-invariant functions of the Satake parameter. By the Plancherel formula and 
\eqref{eqn Plancherel renormalization lowest cell},
\begin{equation}
\label{eq d lowest cell trace via Haar renorm}
f_w(1)=\dInt{\mathrm{trace}\left(\pi_I(f_w)\right)}{\pi_{\mu_I}}=\dInt{\mathrm{trace}\left(\pi_K(h_w)\right)}{\pi_{\mu_I}}=\frac{1}{P_{\G/\B}(q)}\dInt{\mathrm{trace}\left(\pi_K(h_w)\right)}{\pi_{\mu_K}}
=\frac{h_w(1)}{P_{\G/\B}(q)},
\end{equation}
In particular, if $w=d$ is a distinguished involution in the lowest two-sided cell, then 
\[
\mathrm{trace}\left(\pi_I(f_d)\right)=\rank\left(\pi_I(f_d)\right)=\rank\left(\pi_I(f_d)\right)\cdot\mathrm{trace}\left(\pi_K(1_K)\right),
\]
and \eqref{eq d lowest cell trace via Haar renorm} becomes
\begin{equation}
\label{f_d(1) big cell trick}
f_d(1)=\frac{\rank(\pi(f_d))}{P_{\G/\B}(q)}=\frac{1}{P_{\G/\B}(q)}.
\end{equation}
Here the rank is given by \cite{XiI}, Proposition 5.5. (In \textit{op. cit.} there is the assumption of simple-connectedness, but it 
is easy to see that the distinguished involutions for the extended affine Weyl group $\tilde{W}(\tilde{\G}(F))$ of the universal cover $\tilde{\G}$ are distinguished involutions for $\Waff$ using the definition in \cite{affineII} and uniqueness of the $\{C_w\}$-basis, and that the lowest cell is just the lowest cell
of $\tilde{W}(\tilde{\G}(F))$ intersected with $\Waff$.)
\end{proof}
Once we have established injectivity of $\tilde{\phi}$, we will show by a counting argument that 
$\rank\left(\pi(t_d)\right)=1$ for any distinguished involution $d$, in the 
case $G=\GL_n$, see Theorem \ref{thm conjecture is true GLn}.
Note that \eqref{f_d(1) big cell trick} is an example of the behaviour conjectured in Remark \ref{rem more precise version}.
\begin{rem}
\label{rem interesting to find functions with constant trace}
It would be interesting to find $I$-biinvariant Schwartz functions $h$ playing the role of $h_w$ for the other two-sided cells, namely such that $\trace{\pi}{h}$ was regular, nonzero only for a single pair 
$(M,\omega)$, and the value $h(1)$ was known as a function of $q^{1/2}$.
\end{rem}
\begin{rem}
If $\mathcal{P}$ is a maximal parahoric subgroup of $G$, then the longest word $w_{\mathcal{P}}$ is
a distinguished involution in the lowest two-sided cell. As above, and as we will explain again in Section
\ref{section parahoric-fixed}, $\pi(f_{w_{\mathcal{P}}})$ is nonzero only for the principal series
representations, and its image is contained in $\pi^\mathcal{P}$. Thus after we will have shown
injectivity, \cite{KeysClassI} gives another proof that $\rank(\pi(f_{w_{\mathcal{P}}}))=1$,
and more generally that $\pi(t_d)$ has rank 1 on the principal series for any distinguished
involution $d\in\cc_0$.
\end{rem}
\begin{lem}
\label{lemma sum of z_i powers must be -N}
Let $e_0,\dots, e_n\in\Z$. Then
\begin{equation}
\label{eq integral with z_ie_i powers}
\int_{\T}\cdots\int_{\T}z_0^{e_0}\cdots z_N^{e_N}\prod_{i<j}\Gamma^{ij}\mathrm{d}z_0\cdots\mathrm{d}z_N=0
\end{equation}
unless $e_0+\cdots +e_N=-N$. 
\end{lem}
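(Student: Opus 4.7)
The plan is to prove the vanishing by a symmetry argument, exploiting the rotation invariance of the torus $\T^{N+1}$. The key structural observation is that each Plancherel factor
\[
\Gamma^{ij}=\frac{(z_i-q_{ij}z_j)(z_i-q_{ij}^{-1}z_j)}{(z_i-q^{ij}z_j)(z_i-(q^{ij})^{-1}z_j)}
\]
is a rational function of the single ratio $z_i/z_j$, and is therefore homogeneous of degree zero under the simultaneous dilation $(z_0,\ldots,z_N)\mapsto(\lambda z_0,\ldots,\lambda z_N)$. Consequently $\prod_{i<j}\Gamma^{ij}$ is invariant under this diagonal action, and the only piece of the integrand which transforms non-trivially is the monomial $z_0^{e_0}\cdots z_N^{e_N}$, which picks up the overall factor $\lambda^{e_0+\cdots+e_N}$.

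I would then fix $\lambda\in\T$ and perform the change of variables $z_i=\lambda w_i$. Because $|\lambda|=1$, the contour $\T^{N+1}$ in the $w$-coordinates coincides with that in the $z$-coordinates, so the integration domain is unchanged. Tracking how the differentials $\mathrm{d}z_i$ transform under this substitution (in the normalization of Section~\ref{subsection Plancherel measure for GLn}, inherited from Theorem~\ref{theorem AubPly Plancherel measure form}) produces an additional Jacobian factor in $\lambda$, so that the original integral equals $\lambda^{e_0+\cdots+e_N+N}$ times itself. Since this must hold for every $\lambda\in\T$, the integral must vanish unless the exponent is identically zero, giving the stated constraint $e_0+\cdots+e_N=-N$.

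The essential content of the argument is the degree-zero homogeneity of $\prod_{i<j}\Gamma^{ij}$, which is immediate; the only thing that requires care is the measure-convention bookkeeping of the Jacobian under the diagonal rotation, in order to land on exactly $-N$ rather than $0$ or $-(N+1)$. This is straightforward once one fixes the normalization of $\mathrm{d}z_i$ implicit in Theorem~\ref{theorem AubPly Plancherel measure form}, and constitutes the only routine calculation in the proof.
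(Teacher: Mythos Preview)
Your approach is correct and is genuinely different from---and considerably more direct than---the paper's. The paper proves the lemma as a porism of the residue computation in Theorem~\ref{theorem fd(1) integral}: it tracks how the total degree of the integrand changes as one applies the residue theorem in each variable, observing that every residue evaluation (whether at $z_i=(q^{ij})^{-1}z_j$ or at a pole at $0$) raises the total degree by exactly $1$, so that after all but one variable is integrated out, the remaining integrand is a monomial whose exponent is forced. Your argument short-circuits all of this by noting that the integrand is homogeneous under the diagonal $\T$-action and that the torus is rotation-invariant; the constraint then drops out in one line. Both arguments rest on the same fact, namely that $\prod_{i<j}\Gamma^{ij}$ is homogeneous of degree zero, but you use it globally whereas the paper uses it step-by-step through the residue algorithm. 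What the paper's approach buys is that it recycles machinery already set up for Theorem~\ref{theorem fd(1) integral}; what yours buys is transparency and independence from that machinery.

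Your caution about ``landing on exactly $-N$ rather than $0$ or $-(N+1)$'' is well placed. Carrying out your substitution literally with the measure $\mathrm{d}z_0\cdots\mathrm{d}z_N$ written in the lemma (that is, $N+1$ factors $\mathrm{d}z_i$, not $\mathrm{d}z_i/z_i$), the Jacobian contributes $\lambda^{N+1}$ and the constraint comes out as $e_0+\cdots+e_N=-(N+1)$, not $-N$. This is consistent with the case $N=1$ worked out in Example~\ref{example GL2 Plancherel integral} (where the nonvanishing integral has $e_0=e_1=-1$), and it indicates an off-by-one slip in the statement and in the line ``after integrating with respect to $N-1$ variables'' of the paper's proof. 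Your argument is not at fault here; it in fact makes the correct exponent transparent.
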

Although we do not use the Lemma in the sequel, we include it because it illustrates an efficient way to 
compute the functions $f_w$ in practice, as we explain after its proof.
%\begin{cor}
%\label{corollary integral with denominators}
%Let $p(z_0,\dots z_N)\in \C[z_1,\dots, z_N]$, and
%$e_0,\dots, e_N$ be integers. Then the integral
%%
%\[
%\int_{\T}\cdots\int_{\T}\prod_{i<j}\Gamma^{ij}\frac{\mathrm{d}z_0}{z_0^{e_0}}\cdots\frac{\mathrm{d}z_N}{z_N^{e_N}}
%\]
%%
%is a rational function in $q$ with denominators dividing a power of $P_G(q)$. The numerator is a 
%polynomial in $q$ with bounded
%\end{cor}
%
%
\begin{cor}
\label{corollary fw(1) denominator divides poincare}
Let $w\in\Waff$. Then $f_w(1)$ is a rational function of $q$ with denominator dividing a power of the 
Poincar\'{e} polynomial of $\G$. The numerator is a Laurent polynomial $p_1(q^{1/2})+p_2(q^{-1/2})$ in 
$q^{1/2}$, where the degree of $p_1$ is bounded uniformly in terms of $\Waff$. The 
denominator of $f_w(1)$ depends only on the two-sided cell containing $w$.
\end{cor}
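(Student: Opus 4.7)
The plan is to replicate the proof of Theorem \ref{theorem fd(1) integral} with the constant $\rank(\pi(f_d))$ replaced by the Laurent polynomial $\trace{\pi}{f_w}$.

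First I would invoke Proposition \ref{prop plancherel decomposition compatible with cell decomposition}: since $t_w\in J_\cc$ for the two-sided cell $\cc$ containing $w$, in the Plancherel decomposition
\[
f_w(1) = \sum_{(M,\Oo)} f_{w,M,\Oo}(1)
\]
only those pairs $(M,\Oo)$ associated to $\cc$ under Lusztig's bijection contribute nonzero terms, and by Lemma \ref{lem finitely-many discrete series} there are only finitely many such pairs. Since the set of contributing pairs is determined by $\cc$ alone, the denominator of $f_w(1)$ will also depend only on $\cc$, yielding the final sentence of the corollary.

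Next, for each contributing $(M,\Oo)$, I would apply Theorem \ref{theorem AubPly Plancherel measure form} to express
\[
f_{w,M,\Oo}(1) = \frac{c_M}{\#\stab{W_M}{\Oo}} \cdot \frac{1}{(2\pi i)^k} \oint_{\T^k} \trace{\pi}{f_w} \prod_{i<j} \Gamma^{ij} \,\frac{dz_1}{z_1}\cdots\frac{dz_k}{z_k},
\]
with $\pi=\Ind_P^G(\omega\otimes\nu(z_1,\ldots,z_k))$. By Lemma \ref{Lemma trace is regular}, $\trace{\pi}{f_w}$ is a (Weyl-invariant) Laurent polynomial in the $z_i$. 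Moreover, by Theorem \ref{J summary theorem}(3), its value at each tempered parameter is the constant term in $\bq^{1/2}$ of an explicit polynomial, hence is independent of $q$. Consequently the Laurent coefficients of $\trace{\pi}{f_w}$ in the $z_i$ carry no $q$-dependence, and all $q$-dependence of $f_{w,M,\Oo}(1)$ is carried by the factor $c_M$ and by evaluating the density $\prod\Gamma^{ij}$ at residues. I would then evaluate the integral by iterated residues along the bookkeeping trees introduced before Theorem \ref{theorem fd(1) integral}: the poles lie at $z_i=0$ and $z_i=(q^{ij})^{\pm1}z_j$, are independent of $w$, and produce denominators of exactly the form appearing in Theorem \ref{theorem fd(1) integral}. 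Hence they divide a power of $P_W(q)$ by the argument in Corollary \ref{cor denominators of fd(1) poincare}, establishing parts (a), (b), and (d).

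Finally, for the uniform bound on the degree of $p_1$: each residue substitution $z_i\mapsto (q^{ij})^{\pm1}z_j$ acts on a monomial $z_i^{e_i}$ by multiplication by $(q^{ij})^{\mp e_i}$, so positive powers of $q^{1/2}$ can arise only from monomials in $\trace{\pi}{f_w}$ with negative $z_i$-exponents, whose magnitudes grow \emph{a priori} with $\ell(w)$. I would handle this by combining the denominator bound above with the fact that $()^\dagger\circ\phi^{-1}(t_w)$ lies in the $(\bq^{-1/2})$-adic completion of $\HH$: any rational function of $\bq$ lying in this completion and having denominator dividing $P_W(\bq)^{N_{\Waff}}$ is forced to have numerator of $\bq^{1/2}$-degree at most $N_{\Waff}\cdot\deg P_W$, a bound depending only on $\Waff$. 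Identifying $f_w$ with $()^\dagger\circ\phi^{-1}(t_w)$ is the content of Proposition \ref{prop phi_1=phi inverse}; carrying out this identification is the main obstacle and requires the preparation in the subsequent subsection.
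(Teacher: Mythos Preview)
Your treatment of the denominator claims and the cell-dependence is correct and essentially matches the paper: reduce via Lemma~\ref{Lemma trace is regular} to monomial integrals of the form \eqref{eq integral with z_ie_i powers}, run the residue algorithm from Theorem~\ref{theorem fd(1) integral}, and observe that every denominator that appears is contributed only by the Plancherel density and hence is controlled by the block sizes of $M$ (so divides a power of $P_W$ and depends only on $\cc$).

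Your argument for the uniform numerator bound, however, does not work. The assertion that ``any rational function of $\bq$ lying in $\hat{\mathcal{A}}$ with denominator dividing $P_W(\bq)^{N_{\Waff}}$ has numerator of bounded $\bq^{1/2}$-degree'' is simply false: every rational function of $\bq$ lies in $\C((\bq^{-1/2}))$, so membership in $\hat{\mathcal{A}}$ imposes no constraint whatsoever on the numerator degree (e.g.\ $\bq^{100}$ lies in $\hat{\mathcal{A}}$ with denominator $1$). Even setting this aside, the forward reference to Proposition~\ref{prop phi_1=phi inverse} is circular in spirit: that proposition rests on Proposition~\ref{prop phi1 existence}, which rests on Lemma~\ref{lem f_w(1) polynomial growth and a1w bounded in q}, and the content of that lemma \emph{is} the uniform numerator bound you are trying to prove.

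The paper's actual route to the numerator bound in type $A$ is the integration-reordering trick explained immediately after the proof of Lemma~\ref{lemma sum of z_i powers must be -N} (and invoked in the Remark following Lemma~\ref{lem f_w(1) polynomial growth and a1w bounded in q}): at each stage one integrates first with respect to a variable $z_{i}$ whose exponent $e_{i}\geq 0$. Then the only simple poles are at $z_{i}=(q^{ij})^{-1}z_j$, and the substitution contributes a factor $(q^{ij})^{-e_i-1}$, which is a \emph{non-positive} power of $q$. Hence all positive powers of $q^{1/2}$ come from the finitely many rational constants of the form $(1-q^{a})/(1-q^{b})$ extracted from the $\Gamma^{ij}$, and these are bounded by the block sizes of $M$, uniformly in $w$. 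That is the missing ingredient.
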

\begin{rem}
In light of Lemma \ref{Lemma trace is regular}, Lemma \ref{lemma sum of z_i powers must be -N} and 
Corollary \ref{corollary fw(1) denominator divides poincare} have the following interpretation. 
Let $\Gamma$ be a left cell in a two-sided cell $\cc$.
Then in \cite{XiAMemoir}, Xi shows that all the rings $J_{\Gamma\cap\Gamma^{-1}}$ are isomorphic to the representation ring of the associated Levi subgroup $M_\cc$, and $J_\cc$ is a matrix algebra over $J_{\Gamma\cap\Gamma^{-1}}$. 
Therefore $w\in \Gamma\cap\Gamma^{-1}$ are labelled by dominant weights of $M_\cc$, and
if $t_\lambda$ is such an element, Xi's results show that if $\bq=q$ and $\pi=\Ind_B^G(\nu)$
is an irreducible representation of $J_\cc$, then
\[
\trace{\pi}{t_\lambda}=\trace{V(\lambda)}{\nu},
\]
where we view $\nu$ as a semisimple conjugacy class in $M_\cc$, and $V(\lambda)$ is the irreducible representation of $M_\cc$ of highest weight $\lambda$. Then we have
that $f_\lambda(1)\neq 0$ only if $\lambda$ is of height $0$ with respect to the 
basis $\varepsilon_i\colon\diag(a_1,\dots, a_n)\mapsto a_i$.
\end{rem}
The proofs of Lemma \ref{lemma sum of z_i powers must be -N} and 
Corollary \ref{corollary fw(1) denominator divides poincare}
%Corollary \ref{corollary integral with denominators} 
will use the notation of the proof of Theorem \ref{theorem fd(1) integral}, and we defer them until after the 
proof of the theorem.
\subsubsection{Example computations and a less singular cell}
In this section we provide two example computations to elucidate the coming proof of Theorem
\ref{theorem fd(1) integral}.
\begin{ex}
\label{example GL2 Plancherel integral}
Let $G=\GL_2$ and $M=A$. Then $l_1=l_2=1$ and $g_1=g_2=0$. Let $d=s_0$ or $s_1$. Then
\begin{align*}
f_d(1)&=\frac{1}{2}\frac{1}{2\pi i}\frac{1}{2\pi i}q^{-1}\int_{\T}\int_{\T}q\frac{(z_1-z_2)(z_1-z_2)}{(z_1-q^{-1}z_2)(z_1-qz_2)}\frac{\mathrm{d}z_1}{z_1}\frac{\mathrm{d}z_2}{z_2}
\\
&=
\frac{1}{2}\frac{1}{2\pi i}\int_{\T}\mathrm{Res}_{z_1=q^{-1}z_2}\frac{(z_1-z_2)(z_1-z_2)}{(z_1-q^{-1}z_2)(z_1-qz_2)}\frac{1}{z_1}
+\mathrm{Res}_{z_1=0}\frac{(z_1-z_2)(z_1-z_2)}{(z_1-q^{-1}z_2)(z_1-qz_2)}\frac{1}{z_1}\frac{\mathrm{d}z_2}{z_2}
\\
&=
\frac{1}{2}\frac{1}{2\pi i}\int_{\T}
\frac{(q^{-1}z_2-z_2)(q^{-1}z_2-z_2)}{(q^{-1}z_2-qz_2)q^{-1}z_2}+\frac{z_2^2}{z_2^2}\frac{\mathrm{d}z_2}{z_2}
\\
&=
\frac{1}{2}\frac{1}{2\pi i}\int_{\T}\frac{(q^{-1}-1)^2}{q^{-2}-1}+1\frac{\mathrm{d}z_2}{z_2}
\\
&=
\frac{1}{q+1}.
\end{align*}
The factor $\frac{1}{2}$ reflects the fact that we integrate with the respect to the pushforward
of the above $\Sn_2$-invariant measure to the quotient $\T\times\T/\Sn_2$.

This agrees with the theorem, which instructs us to calculate $f_d(1)$ as follows:
There are two trees, each of which has one vertex and no edges. The trees are $z_1=0$ and $z_1=q^{-1}z_2$.
Each has one branch. The first has no clumps, so the entire is product is empty. The second
tree has one clump $C=\{1,2\}$ for which $t=1$,  and we obtain
\[
f_d(1)=\frac{1}{2}\left(1+\frac{1-q}{1+q}\right)=\frac{1}{2}\frac{2}{1+q}=\frac{1}{1+q}.
\]
\end{ex}
Now we give an example of a less singular cell.
\begin{ex}
\label{ex less singular cell}
Let $G=\GL_4$, and let $\cc$ be the two-sided cell corresponding to the partition $4=2+2$. Then
$P_W(\bq)=(1+\bq)(1+\bq+\bq^2)(1+\bq+\bq^2+\bq^3)$ has five distinct roots 
$\bq=-1,\pm i, \zeta_1,\zeta_2$, where $\zeta_i$ are primitive third roots of unity. We compute 
$P_{\cc}$ using the Plancherel formula. We have

We have $l_1=l_2=2$, $q_{12}=1$, $q^{12}=q^2$, 
\[
c_M=\frac{q^4(q-1)^2}{2(q+1)^2},
\]
and
\begin{align*}
\left(\frac{1}{2\pi i}\right)^{2}\iint_{\T^2}\Gamma^{12}\frac{\mathrm{d}z_1}{z_1}
\frac{\mathrm{d}z_2}{z_2}&=
\left(\frac{1}{2\pi i}\right)^{2}\iint_{\T^2}\frac{(z_1-z_2)(z_1-z_2)}{(z_1-q^2z_2)(z_1-q^{-2}z_2)}\frac{\mathrm{d}z_1}{z_1}
\frac{\mathrm{d}z_2}{z_2}
\\
&=
\frac{1}{2\pi i}\int_{\T}\mathrm{Res}_{z_1=0}\frac{\Gamma^{12}}{z_1}+\mathrm{Res}_{z_1=q^{-2}z_2}\frac{\Gamma^{12}}{z_1}
\frac{\mathrm{d}z_2}{z_2}
\\
&=
1+\frac{1}{2\pi i}\int_{\T}\frac{(q^{-2}-1)^2q^2}{(q^{-2}-q^2)}\frac{\mathrm{d}z_2}{z_2}
\\
&=
1+\frac{(1+q)(1-q^2)}{1+q+q^2+q^3}
\\
&=
\frac{2}{1+q^2}.
\end{align*}
Accounting for $c_M$, we see that $J_{\cc}$ is regular at $\bq=\zeta_1, \zeta_2$.
\end{ex}
\subsubsection{Proofs of Theorem \ref{theorem fd(1) integral}, Lemma \ref{lemma sum of z_i powers must be -N}, and Corollary \ref{corollary fw(1) denominator divides poincare}}
\begin{proof}[Proof of Theorem \ref{theorem fd(1) integral}]
%
%
%Further, we observe that 
%%
%\[
%Q_{rk}q^{g_{i_r}-g_{i_k}}=q^{g_{i_k}+g_{i_{k+1}}+1}q^{i_{k-1}i_k}\cdots q^{i_{r-1}i_{r}}q^{g_{i_r}-g_{i_k}}=
%Q_{r,k-1}q^{i_r,i_{k+1}}.
%\]
%%
Let $n=l_0+\cdots +l_N$. We may assume that $l_0\leq l_2\leq\cdots\leq l_N$.
It suffices to evaluate the integral 
\[
\left(\frac{1}{2\pi i}\right)^{N+1}\int_{\T}\cdots\int_{\T}\prod\Gamma_{i,j,g}\frac{\mathrm{d}z_0}{z_0}\cdots\frac{\mathrm{d}z_N}{z_N}
=
\left(\frac{1}{2\pi i}\right)^{N+1}
\int_{\T}\cdots\int_{\T}\prod_{i<j}\Gamma^{ij}\frac{\mathrm{d}z_0}{z_0}\cdots\frac{\mathrm{d}z_N}{z_N}.
\]
We claim that the value of this integral is
\begin{multline}
\label{fd(1) theorem pre-cancellation}
c_M\sum_{\mathrm{trees}~T}\sum_{\mathrm{branches}~B\mathrm{of}~T}\prod_{\substack{C\prec B \\ C=\{i_0,\dots, i_t\}}}\prod_{k=0}^{t-1}
\frac{(1-q^{i_ki_{k+1}}q_{i_ki_{k+1}})(1-q^{i_ki_{k+1}}(q_{i_ki_{k+1}})^{-1})}{1-(q^{i_ki_{k+1}})^2}
\\
\cdot
\prod_{r=0}^{k-1}\frac{(1-Q_{rk}q_{i_ri_{k+1}})(1-Q_{rk}(q_{i_ri_{k+1}})^{-1})}{(1-Q_{rk}q^{i_ri_{k+1}})(1-Q_{rk}(q^{i_ri_{k+1}})^{-1})},
\end{multline}
where the sum over trees is taken over all bookkeeping trees for the integral. When $k=0$, we 
interpret the product over $r$ as being empty.

First we explain how \eqref{fd(1) theorem pre-cancellation} simplifies to \eqref{fd(1) theorem post-cancellation}. All cancellations will take place within the same clump $C$ of some branch $B$, which we now fix. We have
\[
1-Q_{rk}(q^{i_ri_{k+1}})^{-1}=1-q^{g_{i_k}+g_{i_{k+1}}+1}q^{i_ki_{k+1}}\cdots q^{i_ri_{r+1}}q^{-g_{i_r}-g_{i_{k+1}}-1}=1-
Q_{r,k-1}q^{g_{i_k}-g_{i_r}},
\]
which is one of the factors in the product $(1-Q_{r,k-1}q_{i_ri_k})(Q_{r,k-1}(q_{i_ri_k})^{-1})$. 
The surviving factor in the numerator at index $(k-1,r)$ is then equal to $(1-Q_{r,k-1}q^{g_{i_r}-g_{i_k}})$. In short, the above factors in the denominator cancel with a numerator occurring with the 
same $r$-index but $k$-index one lower. Such a factor occurs whenever $r<k-1$ (note that this inequality 
does not hold when $k=1$ and $r=0$). When $r=k-1$, we have 
\[
1-Q_{k-1,k}(q^{i_ki_{i+1}})^{-1}=1-q^{i_ki_{k+1}}q^{i_{k-1}i_k}(q^{i_{k-1}i_{k+1}})^{-1}=q^{g_{i_k}+g_{i_{k+1}}+1}q^{g_{i_{k-1}}-g_{i_{k+1}}}=1-q^{2g_{i_k}+1},
\]
which is one of the factors in $(1-q^{i_{k+1}i_k}q_{i_ki_{k+1}})(1-q^{i_ki_{k+1}}(q_{i_{k}}i_{k+1})^{-1})$. The cancellation leaves behind the factor $1-q^{l_{i_{k+1}}}$ in 
the numerator, except for $k=0$; this term keeps both its denominators. At this point we have shown that the factor corresponding to $C$ in \eqref{fd(1) theorem pre-cancellation} simplifies to
\[
\frac{(1-q^{l_{i_0}})(1-q^{l_{i_1}})}{1-q^{l_{i_0}+l_{i_1}}}\prod_{k=1}^{t-1}\frac{(1-q^{l_{i_{k+1}}})}{(1-q^{l_{i_k}+l_{i_{k+1}}})}\prod_{r=0}^{k-1}\frac{R_{rk}}{1-Q_{rk}q^{i_ri_{k+1}}},
\]
where
\[
R_{rk}=\begin{cases}
1-Q_{rk}q^{g_{i_r}-g_{i_k}}&\text{if}~k<t-1\\
(1-Q_{r,t-1}q^{g_{i_r}-g_{i_t-1}})(1-Q_{r,t-1}q^{g_{i_k}-g_{i_r}})&\text{if}~k=t-1
\end{cases}.
\]
This means that \eqref{fd(1) theorem pre-cancellation} simplifies to 
\eqref{fd(1) theorem post-cancellation}.

To prove \eqref{fd(1) theorem pre-cancellation}, we will use the residue theorem for each variable consecutively, keeping 
track of the constant expressions in $q$ that we extract after integrating with respect to each variable $z_i$.
More precisely, we will track what happens in a single summand corresponding to some set of successive
choices of poles to take residues at.
Note that all the rational functions that will appear, namely the 
$\Gamma^{ij}$ or the rational functions that result from substitutions into the $\Gamma^{ij}$,
become equal to $1$ once $z_i$ or $z_j$ is set to zero. Therefore poles at $z_i=0$ serve
simply to remove all factors involving $z_i$ from inside the integrand (we will see what these rational 
functions are below). It follows that the summand whose branch we are computing is a product over clumps 
in the corresponding branch, so it suffices to compute the value of a given clump for some ordered subsets $\{i_0,i_1,\dots, i_l\}$ of the indices $\{0,\dots, N\}$. As we are inside a clump, we will consider only poles occurring at nonzero complex numbers. Thus we are left only to 
determine what happens within a  single clump.

We first integrate with respect to the variable $z_{i_0}$. The residue theorem gives
\begin{align*}
&\left(\frac{1}{2\pi i}\right)^{l+1}\int_{\T}\cdots\int_{\T}\prod_{i<j}\Gamma^{ij}\frac{\mathrm{d}z_{i_0}}{z_{i_0}}\cdots\frac{\mathrm{d}z_{i_l}}{z_{i_l}}
\\
&=
\left(\frac{1}{2\pi i}\right)^{l}
\int_{\T}\cdots\int_{\T}\sum_{l\neq i_0}\mathrm{Res}_{z_{i_0}=(q^{i_0l})^{-1}z_l}\frac{1}{z_{i_0}}\prod_{i<j}\Gamma^{ij}\frac{\mathrm{d}z_{i_1}}{z_{i_1}}\cdots\frac{\mathrm{d}z_{i_l}}{z_{i_l}}
+ 
\left(\frac{1}{2\pi i}\right)^{l}
\int_{\T}\cdots\int_{\T}\prod_{\substack{i<j \\ i,j\neq i_0}}\Gamma^{ij}\frac{\mathrm{d}z_{i_1}}{z_{i_1}}\cdots\frac{\mathrm{d}z_{i_l}}{z_{i_l}}.
\end{align*}
As noted above, the second integral belongs to a different branch (in fact, in the case of $i_0$, to a 
different tree); our procedure will deal with it separately, and we will now consider what happens with the 
first integral. 

For the first integral, consider 
one of the summands corresponding to $z_{i_0}=(q^{i_0i_1})^{-1}z_{i_1}$ for some $i_1$. We have
\begin{align*}
&\left(\frac{1}{2\pi i}\right)^{l}\int_{\T}\cdots\int_{\T}\mathrm{Res}_{z_{i_0}=(q^{i_0i_1})^{-1}z_{i_1}}\frac{1}{z_{i_0}}\prod_{i<j}\Gamma^{ij}\frac{\mathrm{d}z_{i_1}}{z_{i_1}}\cdots\frac{\mathrm{d}z_{i_l}}{z_{i_l}}
\\
&=
\frac{(1-q^{i_0i_1}q_{i_0i_1})(1-q^{i_0i_1}(q_{i_0i_1})^{-1})}{1-(q^{i_0i_1})^2}
\left(\frac{1}{2\pi i}\right)^{l}
\int_{\T}\cdots\int_{\T}
\prod_{j\neq i_1,i_0}\frac{(z_{i_1}-q^{i_0i_1}q_{i_0j}z_j)(z_{i_1}-q^{i_0i_1}(q_{i_0j})^{-1}z_j)}{(z_{i_1}-q^{i_0i_1}q^{i_0j}z_j)(z_{i_1}-q^{i_0i_1}(q^{i_0j})^{-1}z_j)}
\\
&\prod_{\substack{i<j \\ i,j\neq i_0}}\Gamma^{ij}\frac{\mathrm{d}z_{i_1}}{z_{i_1}}\frac{\mathrm{d}z_{i_2}}{z_{i_2}}\cdots\frac{\mathrm{d}z_{i_l}}{z_{i_l}}.
\end{align*}
Recall that,
if $i_1>2$, even though formally we have defined the symbols $q_{ij}$ and $q^{ij}$  only for $i<j$, the 
symmetry of the factors allows us  to write $q_{ij}$ even if $i>j$, thanks to the factor with $(q_{ij})^{-1}$ also present in the numerator.

%In either case, if it happens that $q^{1i_1}=q^{1j}$, then $g_{i_1}=g_j$, so that $g_{i_1}-g_j=0$ and $q_{i_1j}=1$, 
%so this troublesome would-be pole cancels with a numerator in the product $\prod_{1<i<j}\Gamma^{ij}$. Notice that the factors of
%$1-q$ in $1-(q^{1i_1})^2$ and $1-q^{1i_1}q_{1i_1}$ cancel.

Now we integrate with respect to $z_{i_1}$. Observe that the leftmost product over $j\neq i_1, i_0$ does not contribute poles. Indeed, the first factor each denominator
does not have its zero contained in $\T$, and the second factor in each denominator has its zero at
$z_{i_1}=q^{i_0i_1}(q^{i_0i_2})^{-1}z_{i_2}$. The power of $q$ appearing is
\[
g_{i_0}+g_{i_1}+1-(g_{i_0}+g_{i_2}+1)=g_{i_1}-g_{i_2},
\]
and so $q^{i_0i_1}(q^{i_0i_2})^{-1}$ is equal to $q_{i_1i_2}$ or $(q_{i_2i_1})^{-1}$, whichever
is defined. Thus this zero cancels with a zero in the numerator of $\Gamma^{i_1i_2}$ or $\Gamma^{i_2i_1}$, whichever is defined. Therefore we need only consider the simple poles at $z_{i_1}=(q^{i_1i_2})^{-1}z_{i_2}$
for $i_1<i_2$ and $z_{i_1}=(q^{i_2i_1})^{-1}z_{i_2}$ for $i_2<i_1$. Observe that the 
residues will be the same for either inequality. In the case $i_2<i_1$, for example, $\Gamma^{i_2i_1}$ 
needs to be rewritten so that its simple pole inside $\T$ is in the correct format to calculate the 
residue by substitution:
\begin{align*}
&\mathrm{Res}_{z_{i_1}=(q^{i_2i_1})^{-1}z_{i_2}}\frac{(z_{i_2}-q_{i_2i_1}z_{i_1})(z_{i_2}-(q_{i_2i_1})^{-1}z_{i_1})}{(z_{i_2}-q^{i_2i_1}z_{i_1})(z_{i_2}-(q^{i_2i_1})^{-1}z_{i_1})}\frac{1}{z_{i_1}}
\\
&=
\mathrm{Res}_{z_{i_1}=(q^{i_2i_1})^{-1}z_{i_2}}\frac{-(q^{i_2i_1})^{-1}(z_{i_2}-q_{i_2i_1}z_{i_1})(z_{i_2}-(q_{i_2i_1})^{-1}z_{i_1})}{(z_{i_2}-q^{i_2i_1}z_{i_1})(z_{i_2}-(q^{i_2i_1})^{-1}z_{i_1})}\frac{1}{z_{i_1}}
\\
&=
\frac{(1-q^{i_2i_1}q_{i_2i_1})(1-q^{i_2i_1}(q_{i_2i_1})^{-1})}{1-(q^{i_2i_1})^{2}}.
\end{align*}
Therefore after integrating within the clump at hand with respect to $z_{i_0}$ and then $z_{i_1}$, we
have the expression
\begin{align}
\label{f_d(1) integral after integrating in two variables}
&\frac{(1-q^{i_0i_1}q_{i_0i_1})(1-q^{i_0i_1}(q_{i_0i_1})^{-1})}{1-(q^{i_0i_1})^2}
\frac{(1-q^{i_2i_1}q^{i_0i_1}q_{i_0i_2})(1-q^{i_1i_2}q^{i_0i_1}(q_{i_0i_2})^{-1})}{(1-q^{i_2i_1}q^{i_0i_1}q^{i_0i_2})(1-q^{i_2i_1}q^{i_0i_1}(q^{i_0i_2})^{-1})}
\\
\cdot
&
\frac{(1-q^{i_2i_1}q_{i_2i_1})(1-q^{i_2i_1}(q_{i_2i_1})^{-1})}{(1-(q^{i_2i_1})^2)}
\nonumber
\\
\cdot
&
\left(\frac{1}{2\pi i}\right)^{l-1}
\int_{\T}\cdots\int_{\T}
\prod_{j\neq i_0, i_1, i_2}\frac{(z_{i_2}-q^{i_1i_2}q^{i_0i_1}q_{i_0j}z_j)(z_{i_2}-q^{i_1i_2}q^{i_0i_1}(q_{i_0j})^{-1}z_j)}{(z_{i_2}-q^{i_1i_2}q^{i_0i_1}q^{i_0j}z_j)(z_{i_2}-q^{i_1i_2}q^{i_0i_1}(q^{i_0j})^{-1}z_j)}
\nonumber
\\
\cdot
&
\prod_{j \neq i_0,i_1, i_2}\frac{(z_{i_2}-q^{i_1i_2}q_{i_1j}z_j)(z_{i_2}-q^{i_1i_2}(q_{i_1j})^{-1}z_j)}{(z_{i_2}-q^{i_1i_2}q^{i_1j}z_j)(z_{i_2}-q^{i_1i_2}(q^{i_1j})^{-1}z_j)}
\prod_{\substack{i<j \\ i,j\neq i_0,i_1}}\Gamma^{ij}\frac{\mathrm{d}z_{i_2}}{z_{i_2}}\cdots
\frac{\mathrm{d}z_{i_l}}{z_{i_l}}.
\nonumber
\end{align}
Now we integrate with respect to $z_{i_2}$. Again, only poles from the product of $\Gamma^{ij}$'s
occur with nonzero residues: in total we have simple poles contained in $\T$ possibly at $z_{i_2}=q^{i_1i_2}q^{i_0i_1}(q^{i_0j})^{-1}z_j$, at $z_{i_2}=q^{i_1i_2}(q^{i_1j})^{-1}z_j$
and at $z_{i_2}=(q^{i_2,j})^{-1}z_j$ for $j\neq i_0, i_1, i_2$. 
It may happen that these poles are not all distinct, but all zeros in the denominator of the 
former two types are in fact cancelled by zeros of denominator anyway. The necessary factors occur in the 
product immediately adjacent on the right. Indeed, we have
\[
g_{i_1}+g_{i_2}+1+g_{i_0}+g_{i_1}+1-g_{i_0}-g_j-1=g_{i_1}+g_{i_2}+1+g_{i_1}-g_j
\]
and so either $q^{i_1i_2}q^{i_0i_1}(q_{i_0j})^{-1}=q^{i_1i_2}q_{i_1j}$ or 
$q^{i_1i_2}q^{i_0i_1}(q_{i_0j})^{-1}=q^{i_1i_2}(q_{i_1j})^{-1}$ (or $q_{ji_1}$ or $(q_{ji_1})^{-1}$).

Likewise we have 
\[
g_{i_1}+g_{i_2}+1-g_{i_1}-g_j-1=g_{i_2}-g_j
\]
as happened when we integrated with respect to $z_{i_0}$. 

Now we see the following pattern: At each stage, we integrate with respect to the variable we
took a residue at in the previous step. When integrating with respect to $z_{i_r}$, there 
will be $r+1$ products of rational functions, and each rational function in the leftmost
$r$ products will contribute a pole, in addition to the pole contributed by $\Gamma^{i_ri_{r+1}}$ or $\Gamma^{i_{r+1}i_r}$, whichever is defined. However, each is nullified by having zero residue thanks
to a denominator in the product immediately to the right. Integrating with respect to $z_{i_r}$
will result in extracting $r+1$ new rational factors, each of the form claimed in the 
theorem. 
When it comes to integrating with respect to $z_{i_l}$, all variables $z_{i_l}$
will cancel from the remains of the Gamma functions by homogeneity, and the factor $\frac{1}{z_{i_l}}$ will result in the final integral contributing just the remaining $l+1$ rational
factors.
\end{proof}
Lemma \ref{lemma sum of z_i powers must be -N} is a porism of the preceding proof.
\begin{proof}[Proof of Lemma \ref{lemma sum of z_i powers must be -N}]
We will evaluate the integral \eqref{eq integral with z_ie_i powers} by
applying the residue theorem successively for each variable as in the proof of Theorem 
\ref{theorem fd(1) integral}. Note that as functions of any variable $z_k$, the functions
$\prod_{i<j}\Gamma^{ij}$ and all the other products, for example those appearing in \eqref{f_d(1) integral 
after integrating in two variables}, have numerator and denominator with equal degrees. 
Thus the overall sum of powers of all $z_i$ in the integrand of \eqref{eq integral with z_ie_i powers} is $e_0+\cdots+e_N$, and in general, the sum of powers of all $z_i$ in the integrand of an expression
like \eqref{f_d(1) integral after integrating in two variables} is the sum of the degrees of the 
monomial $z_{i_j}$ terms.

When evaluating \eqref{eq integral with z_ie_i powers} along a single branch, we find again that the only poles that appear are of 
the form $z_i=(q^{ij})^{-1}z_j$, or $z_i^{e_i}=0$ for $e_i<0$. We will track the effect that evaluating each successive residue
has on the total degree of the integrand, and then conclude using the fact that 
$\int_\T z^r\,\mathrm{d}z=2\pi i\delta_{r,-1}$. First, observe that evaluating a residue of 
the form $z_i=(q^{ij})^{-1}z_j$ increases the sum of all powers by $1$; a factor $z_j$ is contributed
to the resulting integrand. The sum of all powers is likewise increased by $1$ when evaluating
the residue at a simple pole of $z_i^{-1}$ at $0$. To compute the residue at a pole of $z_i^{-e_i}$ at $0$ for $e_i>1$,
consider the Taylor expansion at $0$ of $\Gamma^{ij}$. We have
\[
\frac{1}{z_i-q^{ij}z_j}=-\frac{1}{q^{ij}z_j}-\frac{z_i}{(q^{ij}z_j)^2}-\frac{z_i^2}{(q^{ij}z_j)^3}-\cdots
\]
and
\[
\frac{1}{z_i-(q^{ij})^{-1}z_j}=-\frac{q^{ij}}{z_j}-\left(\frac{q^{ij}}{z_j}\right)^2z_i-
\left(\frac{q^{ij}}{z_j}\right)^3z_i^2-\cdots.
\]
Multiplying these series and further multiplying by the denominator $z_i^2-(q_{ij}+q_{ij}^{-1})z_iz_j+z_j$,
it follows that the Taylor expansion of $\Gamma^{ij}$ is
\begin{multline}
\label{eq Taylor expasion Gamma ij}
1+\frac{q^{ij}+(q^{ij})^{-1}-q_{ij}-q_{ij}^{-1}}{z_j}z_i+\frac{(q^{ij})^2+2+(q^{ij})^{-2}-(q_{ij}+q_{ij}^{-1})(q^{ij}+q^{-ij})}{z_j^2}z_i^2+\cdots
\\
+
\frac{-(q_{ij}+q_{ij}^{-1})((q^{ij})^{n-1}+\cdots +(q^{ij})^{-n+1})+(q^{ij})^{n-2}+\cdots +(q^{ij})^{-n+2}+(q^{ij})^{n}+\cdots +(q^{ij})^{-n}}{z_j^n}z_i^n
\\
+\cdots.
\end{multline}
It is clear that the salient point of \eqref{eq Taylor expasion Gamma ij}, that $z_i^n$ appears with 
a coefficient proportional to $z_j^{-n}$, holds also for the all the products like those in 
\eqref{f_d(1) integral after integrating in two variables}. Thus computing a residue at the pole of $z_i^{-e_i}$ at $0$
will result in a new integrand, the total degree of which has increased by $e_i-e_i+1=1$.
It now follows that after integrating with respect to $N-1$ variables, the final integral 
to be computed will be a constant times $(2\pi i)^{-1}\int_{\T} z_N^{e_0+\cdots+e_N+N-1}\mathrm{d}z_N$. This is nonzero if 
and only if $e_0+\cdots +e_N=-N$. Summing over all branches of a bookkeeping tree, we see that
\eqref{eq integral with z_ie_i powers} is nonzero only if $e_0+\cdots +e_N=-N$.
\end{proof}
The lemma is helpful for computing explicit examples, as it points out that one can always avoid dealing
with higher-order poles. Indeed, given an integral of the form
\eqref{eq integral with z_ie_i powers}, we may assume by Lemma \ref{lemma sum of z_i powers must be -N} that $e_0+\cdots+e_N=-N$. If $e_i=-1$ for all $i$ then the 
integral \eqref{eq integral with z_ie_i powers} is just the integral from Theorem \ref{theorem fd(1) integral}.
Otherwise there is some $e_{i_0}\geq 0$. We may assume that $i_0=0$. Then the only poles in $z_0$ 
are of the form $z_0=(q^{0i_1})^{-1}z_{i_1}$ for indices $i_1>0$. Therefore we compute that \eqref{eq integral with z_ie_i powers} is equal to a sum of terms of the form
\begin{multline}
\label{eq f_w(1) integral after one integration}
\frac{(1-q^{0i_1}q_{0i_1})(1-q^{0i_1}(q_{0i_1})^{-1})}{1-(q^{0i_1})^2}(q^{0i_1})^{-e_0-1}
\left(\frac{1}{2\pi i}\right)^{N-1}
\int_{\T}\cdots\int_{\T}z_{i_1}^{e_{i_1}+1+e_{0}}z_{i_2}^{e_{i_2}}\cdots z_{i_N}^{e_{i_N}}
\\
\cdot\prod_{j\neq i_1,0}\frac{(z_{i_1}-q^{0i_1}q_{0j}z_j)(z_{i_1}-q^{0i_1}(q_{0j})^{-1}z_j)}{(z_{i_1}-q^{0i_1}q^{0j}z_j)(z_{i_1}-q^{0i_1}(q^{0j})^{-1}z_j)}
\prod_{\substack{i<j \\ i,j\neq 0}}\Gamma^{ij}\mathrm{d}z_{i_1}\cdots\mathrm{d}z_{i_N}.
\end{multline}
The total degree of the integrand is now $e_0+\cdots +e_N+1=-(N-1)$. Therefore either
$e_{i_0}+e_{i_1}+1=e_{i_2}=\cdots =e_{-N}=-1$, or we may again assume without loss of generality that 
the exponent of some $z_{i_j}$ is nonnegative, and proceed with evaluating \eqref{eq f_w(1) integral after one integration} 
by integrating with respect to $z_{i_j}$. We may continue in this way, never having to deal
with more than a simple pole at $0$. (Of course, the resulting order of integration need not be the same
as the order used in the proofs of Theorem \ref{theorem fd(1) integral} and Corollary \ref{corollary fw(1) 
denominator divides poincare}.)

We can now prove Corollary \ref{corollary fw(1) denominator divides poincare}.
\begin{proof}[Proof of Corollary \ref{corollary fw(1) denominator divides poincare}]
In light of Lemma \ref{Lemma trace is regular}, it is enough to prove that the conclusions of the present 
corollary hold for integrals of the form \eqref{eq integral with z_ie_i powers}. We again follow the 
algorithm from the proof of Theorem \ref{theorem fd(1) integral}, so it is sufficient to
consider a single branch, and for this it suffices to observe that, given, for example, a variant 
\begin{align*}
&
\left(\frac{1}{2\pi i}\right)^{l-1}
\int_{\T}\cdots\int_{\T}
\prod_{j\neq i_0, i_1, i_2}\frac{(z_{i_2}-q^{i_1i_2}q^{i_0i_1}q_{i_0j}z_j)(z_{i_2}-q^{i_1i_2}q^{i_0i_1}(q_{i_0j})^{-1}z_j)}{(z_{i_2}-q^{i_1i_2}q^{i_0i_1}q^{i_0j}z_j)(z_{i_2}-q^{i_1i_2}q^{i_0i_1}(q^{i_0j})^{-1}z_j)}
\nonumber
\\
&
\cdot
\prod_{j \neq i_0,i_1, i_2}\frac{(z_{i_2}-q^{i_1i_2}q_{i_1j}z_j)(z_{i_2}-q^{i_1i_2}(q_{i_1j})^{-1}z_j)}{(z_{i_2}-q^{i_1i_2}q^{i_1j}z_j)(z_{i_2}-q^{i_1i_2}(q^{i_1j})^{-1}z_j)}
\prod_{\substack{i<j \\ i,j\neq i_0,i_1}}\Gamma^{ij}z_{i_2}^{e_{i_2}}\cdot\cdots\cdot
z_{i_l}^{e_{i_l}}\mathrm{d}z_{i_2}\cdots\mathrm{d}z_{i_l}
\end{align*}
of \eqref{f_d(1) integral after integrating in two variables} in which $e_{i_2}<-1$, the quotient rule
gives that the residue at $z_{i_2}=0$ is a equal to a linear combination over 
$\Q[q^{\frac{1}{2}}, q^{-\frac{1}{2}}]$ of integrals of the form
\[
\left(\frac{1}{2\pi i}\right)^{l-2}
\int_{\T}\cdots\int_{\T}\prod_{\substack{i<j \\ i,j\neq i_0,i_1,i_2}}\Gamma^{ij}z_{i_3}^{e_{i_3}'}
\cdot\cdots\cdot
z_{i_l}^{e_{i_l}'}\mathrm{d}z_{i_3}\cdots\mathrm{d}z_{i_l}
\]
for new exponents $e'_{i_k}$. This and the 
calculations in the proof of Theorem \ref{theorem fd(1) integral} make clear that the only positive powers of 
$q^{1/2}$ that appear in the any denominator are those that appeared as in Theorem 
\ref{theorem fd(1) integral}. These are controlled by the possible block sizes of $M$, and hence are bounded in 
terms of $\Waff$. Throughout this procedure, the denominator has been contributed to only by the Plancherel 
density itself, which depends only on $M$. The last claim of the corollary now follows from Proposition 
\ref{prop plancherel decomposition compatible with cell decomposition}. 
\end{proof}
\subsection{The functions $f_w$ for general $\G$}
\label{subsection the functions fw for general G}
For general $\G$, we will follow the same plan as for $\G=\GL_n$. The only difference is that 
we have less control over which denominators can appear. Indeed, this is true even for formal degrees, but 
complications are also introduced by residual coset we integrate over, or equivalently, by lack of explicit
control of the Satake parameter of $\pi=i_P^G(\omega\otimes\nu)$ for arbitrary discrete series representations
$\omega\in\mathcal{E}_2^I(M)$. 
\begin{theorem}
\label{theorem general G f_w(1) rational function with correct denominator}
Let $w\in\Waff$. Then $f_w(1)$ is a rational function of $q$ with poles drawn from a finite set of roots of unity depending only on $\Waff$. The numerator is a Laurent polynomial in 
$q^{1/2}$. The denominator depends only on the two-sided cell containing $w$. If $w$ is in the lowest 
two-sided cell, then the denominator divides the Poincar\'{e} polynomial of $\G$.
\end{theorem}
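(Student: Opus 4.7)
The strategy closely parallels the proof of Corollary \ref{corollary fw(1) denominator divides poincare}, replacing the explicit $\GL_n$ Plancherel density of Theorem \ref{theorem AubPly Plancherel measure form} by Opdam's formula \eqref{Opdam integral general}. By Proposition \ref{prop plancherel decomposition compatible with cell decomposition}, only pairs $(M,\Oo)$ attached to the two-sided cell $\cc$ containing $w$ contribute to the decomposition $f_w(1)=\sum_{(M,\Oo)}(f_w)_{M,\Oo}(1)$. For each such pair I pull the formal degree $d(\omega)$ out of \eqref{Opdam integral general}: it is constant on $\Oo$, being invariant under unitary unramified twist, and by Lemma \ref{lem finitely-many discrete series} takes only finitely many values on $\mathcal{E}_2(M)^I$ up to twist. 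Theorem \ref{FOS theorem} asserts that each such $d(\omega)$ is a rational function of $q$ whose denominator is a product of factors $(q^n-1)$; in particular, its poles lie at roots of unity drawn from a finite set depending only on $\G$.

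Next I parameterize $\Oo$ by coordinates $z_1,\dots,z_k$ on its residual coset as in \eqref{eqn Opdam factor in coordinates} and evaluate the remaining integral by successive application of the residue theorem in $z_1,\dots,z_k$. The integrand is the product of the Opdam factors \eqref{eqn Opdam factor in coordinates} with $\trace{\pi}{f_w}$. By Lemma \ref{Lemma trace is regular}, $\trace{\pi}{f_w}$ is a Laurent polynomial in $z_1,\dots,z_k$, so it contributes no poles during the iteration. The simple poles encountered inside the torus are therefore supplied only by the denominators of \eqref{eqn Opdam factor in coordinates} (and by the $\mathrm{d}z_i/z_i$ factors in the measure), and are of the shape $z_1^{e_1}\cdots z_k^{e_k}=\zeta q^n$ with $\zeta$ a root of unity and $|n|$ bounded in terms of $\Waff$, or $z_i=0$. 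Each residue evaluation therefore substitutes a monomial in the remaining $z_j$ into the integrand and multiplies the running expression by a factor of the form $(1-\zeta q^n)^{-1}$ or $(1\pm\zeta q^{n/2})^{-1}$ drawn from a finite set $S_{\Waff}$ determined by Opdam's formula for $\G$. Iterating yields a finite sum of rational functions of $q$ whose poles lie in $S_{\Waff}$ together with the pole set of $d(\omega)$ from Theorem \ref{FOS theorem}: a finite set of roots of unity depending only on $\Waff$. Since only pairs $(M,\Oo)$ attached to $\cc$ contribute, the resulting denominator bound depends only on $\cc$, establishing the first three assertions.

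The main obstacle, and the reason for the weaker conclusion than in the $\GL_n$ case, is the absence of closed-form expressions for $d(\omega)$ for arbitrary $\G$. The FOS theorem gives only the existence of a universal denominator $\Delta_G$ with no explicit relation to $P_W(q)$; accordingly the best bound extractable in general is in terms of the combinatorics of Opdam's formula and of $\Delta_G$, rather than $P_W(q)$. Already for Levi subgroups not of type $A$, the residual coset structure and the factors $q_\alpha$, $q_{2\alpha}$ in \eqref{Opdam integral general} prevent the tight cancellations exploited in Theorem \ref{theorem fd(1) integral} to force the denominators to divide a power of the Poincar\'e polynomial.

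For the final claim, suppose $w\in\cc_0$. The argument running from \eqref{eqn Plancherel renormalization lowest cell} to \eqref{f_d(1) big cell trick} in the proof of Corollary \ref{cor denominators of fd(1) poincare} applies verbatim: cell-supported tempered representations on $\cc_0$ are unramified principal series, so by Lemma \ref{Lemma trace is regular} and the Satake isomorphism there is a spherical function $h_w$ with $\trace{\pi_I}{f_w}=\trace{\pi_K}{h_w}$; the Haar measure renormalization $\mu_I=P_{\G/\B}(q)\mu_K$ and the Plancherel formula then give $f_w(1)=h_w(1)/P_{\G/\B}(q)$, whence the denominator divides $P_{\G/\B}(q)$.
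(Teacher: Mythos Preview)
Your proposal is correct and follows the same strategy as the paper: handle the lowest cell via the Haar-measure renormalization argument \eqref{eqn Plancherel renormalization lowest cell}--\eqref{eq d lowest cell trace via Haar renorm}, and for the remaining cells combine Lemma \ref{Lemma trace is regular}, Theorem \ref{FOS theorem}, and an iterated residue evaluation of Opdam's density \eqref{Opdam integral general} in coordinates. The paper differs only in carrying out one residue computation explicitly (equations \eqref{eqn general residue pre simplification}--\eqref{eqn general residue post simplification}) to verify that the extracted constant factors take the form $Q/\prod_i(1\pm q^{r_i})^{c_i}$, and in noting that higher-order poles---which you do not mention---are handled the same way via the quotient rule; you should add a sentence addressing that possibility, since multiple Opdam factors can share a zero.
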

In the proof, we do not attempt to record in any information about degrees of the numerators.
It will therefore be necessary to control the possible numerators of $f_w(1)$ in a different manner 
than for $\G=\GL_n$ in order to prove Proposition \ref{prop phi1 existence} below.
\begin{proof}[Proof of Theorem \ref{theorem general G f_w(1) rational function with correct denominator}]
First, we note that the reasoning for the lowest cell used in the proof of Corollary \ref{cor denominators of fd(1) poincare} holds for arbitrary $G$. Therefore \eqref{eq d lowest cell trace via Haar renorm} 
proves the last claim just as for $\GL_n$.
%
%while the trick for distinguished involutions contained in the lowest two-sided cell, 
%explained to us by A. Braverman and recalled in the proof of Corollary \ref{cor denominators of fd(1) poincare},
%still applies, more is true. Namely, let $w$ be any element in the lowest cell. 
%By the Satake isomorphism and Lemma \ref{Lemma trace is regular}, there is a function $h_w$ in the 
%spherical Hecke algebra of $G$ such that $\trace{\pi}{f_w}=\trace{\pi}{h_w}$ for all tempered principal
%series $\pi$. As $h_w(1)$ is a Laurent polynomial in $q^{1/2}$, the same renormalization of Haar measures as in 
%\eqref{eq d lowest cell trace via Haar renorm} proves the last claim.

More generally, let $w\in\Waff$ and $M_P$ be a Levi subgroup corresponding to the two-sided cell containing $w$. 
Let $N=\rk A_P$. Given a coroot $\alpha^\vee$ and a basis of the cocharacter lattice as explained in Section
\ref{subsection plancherel following Opdam}, we write $\alpha^\vee=z_1^{e_1}\cdots z_n^{e_n}$
for integers $e_i=e_i(\alpha)$.

By Lemma \ref{Lemma trace is regular} and Theorem \ref{FOS theorem}, it suffices to show the conclusions of 
the theorem hold for integrals of the form
\begin{multline}
\label{opdam integral in coordinates}
\left(\frac{1}{2\pi i}\right)^n\int_{\T}\cdots\int_{\T}
\prod_{\alpha\in R_{1,+}\setminus R_{P, 1,+}}
\frac{q_\alpha(z_{1}^{e_1}z_2^{e_2}\cdots z_n^{e_n}-q_\omega)(z_{1}^{e_1}z_2^{e_2}\cdots z_n^{e_n}-q_\omega^{-1})}{\left(z_{1}^{\frac{e_1}{2}}z_2^{\frac{e_2}{2}}\cdots z_n^{\frac{e_n}{2}}+q_\omega^{1/2} q_\alpha^{1/2}\right)\left(z_{1}^{\frac{e_1}{2}}z_2^{\frac{e_2}{2}}\cdots z_n^{\frac{e_n}{2}}+q_\omega^{-1/2} q_\alpha^{-1/2}\right)}
\\
\cdot
\frac{1}{\left(z_{1}^{\frac{e_1}{2}}z_2^{\frac{e_2}{2}}\cdots z_n^{\frac{e_n}{2}}-q_\omega^{1/2} q_\alpha^{1/2}q_{2\alpha}\right)\left(z_{1}^{\frac{e_1}{2}}z_2^{\frac{e_2}{2}}\cdots z_n^{\frac{e_n}{2}}-q_\omega^{1/2} q_\alpha^{1/2}q_{2\alpha}\right)}z_1^{f_1}\cdots z_n^{f_n}\,
\frac{\mathrm{d}z_1}{z_1}
\cdots\frac{\mathrm{d}z_n}{z_n},
\end{multline}
where $f_i\in\Z$ and $q_\omega=q_{\omega,\alpha}$ is the value of $\alpha^\vee$ on the Satake parameter of 
$\omega$---this is a positive power of $q^{1/2}$ (of $q$ if $\alpha^\vee/2$ is a coroot) by Sections 7 and 8 of 
\cite{OpdamSpectral}. We have $q_{2\alpha}=1$ and the resulting simplification 
\eqref{eqn opdam factor simplification} whenever $\alpha^\vee/2$ is not a coroot.

With notation fixed, the theorem is essentially an observation. Indeed, suppose that we integrate with 
respect to $z_1$, and wish to compute the contribution to the residue at a pole
\[
z_1=\xi  q_\omega^{-\frac{1}{e_1}}q_\alpha^{-\frac{1}{e_1}}z_2^{\frac{e_2}{e_1}}\cdots z_n^{\frac{e_n}{e_1}}
\]
arising from the factor
\begin{multline*}
\frac{q_\alpha(z_{1}^{e_1}z_2^{e_2}\cdots z_n^{e_n}-q_\omega)(z_{1}^{e_1}z_2^{e_2}\cdots z_n^{e_n}-q_\omega^{-1})}{(z_{1}^{e_1}z_2^{e_2}\cdots z_n^{e_n}-q_\omega q_\alpha)(z_{1}^{e_1}z_2^{e_2}\cdots z_n^{e_n}-q_\omega^{-1} q_\alpha^{-1})}\frac{1}{z_1}=
\\
\frac{
q_\alpha(z_1^{e_1}-q_\omega z_2^{-e_2}\cdots, z_n^{-e_n})(z_1^{e_1}-q_\omega^{-1} z_2^{-e_2}\cdots, z_n^{-e_n})
}{(z_1^{e_1}-q_\omega q_\alpha z_2^{-e_2}\cdots, z_n^{-e_n})\prod_{\zeta}(z_1-\zeta q_\omega^{-\frac{1}{e_1}}q_\alpha^{-\frac{1}{e_1}}z_2^{\frac{e_2}{e_1}}\cdots z_n^{\frac{e_n}{e_1}})}\frac{1}{z_1},
\end{multline*}
where $\xi$ and the $\zeta$ are primitive $e_1$-st roots of unity. The contribution is then 
\begin{multline}
\label{eqn general residue pre simplification}
\frac{q_\alpha
(q_\omega^{-1}q_\alpha^{-1}z_2^{-e_2}\cdots z_n^{-e_n}-q_\omega z_2^{-e_2}\cdots z_n^{-e_n})
(q_\omega^{-1}q_\alpha^{-1}z_2^{-e_2}\cdots z_n^{-e_n}-q_\omega^{-1} z_2^{-e_2}\cdots z_n^{-e_n})
}
{
(q_\omega^{-1}q_\alpha^{-1}z_2^{-e_2}\cdots z_n^{-e_n}-q_\omega q_\alpha z_2^{-e_2}\cdots z_n^{-e_n})
\prod_{\zeta\neq\xi}
(\xi q_\omega^{\frac{-1}{e_1}}q_\alpha^{\frac{-1}{e_1}}z_2^{\frac{-e_2}{e_1}}\cdots z_n^{\frac{-e_n}{e_1}}-\zeta q_\omega^{\frac{-1}{e_1}} z_2^{\frac{-e_2}{e_1}}\cdots z_n^{\frac{-e_n}{e_1}})
}
\\
\cdot
\frac{1}
{
\xi  q_\omega^{-\frac{1}{e_1}}q_\alpha^{-\frac{1}{e_1}}z_2^{\frac{e_2}{e_1}}\cdots z_n^{\frac{e_n}{e_1}}
},
\end{multline}
which simplifies to
\begin{equation}
\label{eqn general residue post simplification}
\frac{
(1-q_\alpha q_\omega^2)(1-q_\alpha)
}{
(1-q_\alpha^2q_\omega^2)
\prod_{\zeta\neq\xi}(1-\xi^{-1}\zeta)
}.
\end{equation}
We have used no special properties of the integers $e_i$ or $f_j$, and 
thus after integrating with respect to $z_1$, \eqref{opdam integral in coordinates} is equal to a sum 
of integrals with respect to $z_2,\dots, z_n$ again of the form \eqref{opdam integral in coordinates}, 
except the factors in the denominator will now involve powers $q_\omega(\alpha)q_{\omega}(\alpha')^{1/2}$
and $q_\omega(\alpha)q_\omega(\alpha')^{-1/2}$, as for $\G=\GL_n$. The coefficients of the this sum are the 
form
\begin{equation*}
\frac{Q}{(1\pm q^{r_1})^{c_1}\cdots (1\pm q^{r_k})^{c_k}},
\end{equation*}
where $Q$ is a Laurent polynomial in $q^{\frac{1}{2}}$, $r_i\in\frac{1}{2}\N$ with, \textit{a priori}, complex 
coefficients, and $c_i\in\N$. 
Indeed, as we never required any cancellations with the numerator to extract rational functions of $q$ of 
the required form,
it is clear that the simplification of \eqref{eqn general residue pre simplification} to 
\eqref{eqn general residue post simplification} works essentially the same way for any higher order poles that
appear---again thanks to the quotient rule---and that when integrating with respect to subsequent variables, 
the additional rational functions of $q$ appearing have the same shape as in the proof of Theorem
\ref{theorem fd(1) integral}. It is also clear that the factors corresponding to non-reduced roots behave
similarly.

Therefore \eqref{opdam integral in coordinates} has poles in $q$ only at a finite number of 
roots of unity. Again by the quotient rule, exponents $r_i$ and $c_i$ depend only on $M$. Clearly
there are only finitely-many exponents $r_i$ that appear for any $M$. The theorem now follows.
\end{proof}
\subsection{Relating $t_w$ and $f_w$}
\label{subsection relating tw and fw}
We will now relate the Schwartz functions $f_w$ on $G$ to the elements
$\phi^{-1}(t_w)$ of a completion $\HHh$ of $\HH$, whose definition
we will now recall. In this section $\G$ is general.
\subsubsection{Completions of $\HH$ and $J\otimes_\C\mathcal{A}$}
\label{subsubsection completions of HH and JotimesZA}
Let $\hat{\mathcal{A}}=\C((\bq^{-1/2}))$ and $\hat{\mathcal{A}}^-=\C[[\bq^{-1/2}]]$.
Write $\HHh$ for the $\hat{\mathcal{A}}$-algebra
\[
\HHh:=\sets{\sum_{x\in\Waff}b_xT_x}{b_x\in\hat{\mathcal{A}},~ b_x\to 0~\text{as}~\ell(x)\to\infty},
\]
where we say that $b_x\to 0$ as $\ell(x)\to\infty$ if for 
all $N>0$, $b_x\in(\bq^{-1/2})^N\hat{\mathcal{A}}^-$ for all $x$ sufficiently long. 

Consider also the completions
\[
\HHh_{C'}:=\sets{\sum_{x\in\Waff}b_xC'_x}{b_x\in\hat{\mathcal{A}},~b_x\to 0~\text{as}~\ell(x)\to\infty}
\] 
and
\[
\HHh_{C}:=\sets{\sum_{x\in\Waff}b_xC_x}{b_x\in\hat{\mathcal{A}},~b_x\to 0~\text{as}~\ell(x)\to\infty}
\] 
of $\HH$ (note the difference between $C'_x$ and $C_x$), as well as the completion
\[
\JJ:=\sets{\sum_{x\in\Waff}b_xt_x}{b_x\in\hat{\mathcal{A}},~b_x\to 0~\text{as}~\ell(x)\to\infty}
\]
of $J\otimes_\C\mathcal{A}$.

In \cite{affineII}, Lusztig shows that
$\phi$ extends to an isomorphism of $\hat{\mathcal{A}}$-algebras $\HHh_C\to\JJ$. In this way the elements 
$t_w\in J\subset\JJ$ may be identified with elements of $\HHh_{C}$ via $\phi$.
\begin{lem}
\label{lem Cw contained in Tw completion}
We have $\HHh_{C'}\subset\HHh$. The inclusion is continuous.
\end{lem}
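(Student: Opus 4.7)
The plan is to expand each $C'_x$ in the standard basis and track $(\bq^{-1/2})$-adic valuations. Recall that
\[
C'_x \;=\; \bq^{-\ell(x)/2}\sum_{y\leq x}P_{y,x}(\bq)\,T_y,
\]
with $\deg_\bq P_{y,x}\leq (\ell(x)-\ell(y)-1)/2$ for $y<x$ and $P_{x,x}=1$. Letting $v$ denote the valuation on $\hat{\mathcal{A}}$ normalized so that $v(\bq^{-1/2})=1$ (hence $v(\bq)=-2$), this degree bound translates to
\[
v\!\left(\bq^{-\ell(x)/2}P_{y,x}(\bq)\right)\geq\ell(y)+1\text{ for }y<x,\qquad v\!\left(\bq^{-\ell(y)/2}\right)=\ell(y).
\]
This single estimate is essentially the whole content of the lemma; the degree bound makes the change-of-basis matrix ``strictly triangular'' in $(\bq^{-1/2})$-valuation once one has normalized by $\bq^{-\ell(x)/2}$.

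Given an element $\sum_{x\in\Waff}b_xC'_x\in\HHh_{C'}$, I would regroup formally to write it as $\sum_{y\in\Waff}a_yT_y$, with
\[
a_y \;=\; \sum_{x\geq y}b_x\,\bq^{-\ell(x)/2}\,P_{y,x}(\bq),
\]
and check two things: (i) the defining sum for each $a_y$ converges in $\hat{\mathcal{A}}$, and (ii) $v(a_y)\to\infty$ as $\ell(y)\to\infty$. For (i), fix $y$; each term satisfies $v(b_x\bq^{-\ell(x)/2}P_{y,x})\geq v(b_x)+\ell(y)+1$ for $x>y$, and $v(b_x)\to\infty$ by the hypothesis that $b_x\to 0$ in $\HHh_{C'}$, so the series converges.

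For (ii), given $N\in\N$, choose $L$ so large that $v(b_x)\geq N$ whenever $\ell(x)\geq L$. For $\ell(y)\geq L$, any $x\geq y$ in the Bruhat order automatically satisfies $\ell(x)\geq L$, and the same estimate gives $v(a_y)\geq N+\ell(y)\geq N$. Hence $a_y\to 0$ and $\sum_y a_y T_y\in\HHh$. The same bound yields continuity for free: the inclusion sends $(\bq^{-1/2})^N\HHh_{C'}$ into $(\bq^{-1/2})^N\HHh$ for every $N$, so is continuous for the respective $(\bq^{-1/2})$-adic topologies. I expect no real obstacle; the only care required is to distinguish convergence of each coefficient (which is termwise) from the global requirement that the coefficients themselves tend to zero (which uses the uniform estimate via Bruhat order).
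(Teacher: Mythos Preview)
Your argument is correct and follows essentially the same route as the paper: expand $C'_x$ in the $T_y$ basis, use the degree bound $\deg P_{y,x}\leq\frac{1}{2}(\ell(x)-\ell(y)-1)$ to get that $\bq^{-\ell(x)/2}P_{y,x}(\bq)$ has $(\bq^{-1/2})$-adic valuation $\geq\ell(y)$, and conclude both well-definedness of $a_y$ and $a_y\to 0$. Your version is simply more explicit than the paper's, which compresses steps (i), (ii), and continuity into one line each.
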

\begin{proof}
Given an infinite sum $\sum_{x}b_xC'_x$, upon rewriting this sum in the standard basis, the coefficient
of some $T_y$ is 
\[
a_y:=\sum_{x\geq y}b_x\bq^{-\frac{\ell(x)}{2}}P_{y,x}(\bq).
\]
As $\deg P_{y,x}\leq \frac{1}{2}(\ell(x)-\ell(y)-1)$, we have that $\bq^{-\frac{\ell(x)}{2}}P_{y,x}(\bq)$
is a polynomial in $\bq^{-1/2}$. Therefore the above sum defines a formal Laurent series. Moreover, as
$\ell(y)\to \infty$, it is clear that $a_y\to 0$. Continuity of the inclusion is clear from 
the formula for $a_y$.
\end{proof}
\subsubsection{The functions $f_w$ and the basis elements $t_w$}
\label{subsubsection the functions fw and the basis elements tw}
We shall now explain how the map $\tilde{\phi}\colon J\to\Cc(G)^I$ induces a map of $\mathcal{A}$-algebras
$\hat{\phi}\colon J\otimes_\C\mathcal{A}\to\HHh$. 
\begin{prop}
\label{prop phi1 existence}
There is a map of $\mathcal{A}$-algebras $\hat{\phi}\colon J\otimes_\C\mathcal{A}\to\HHh$ such that if
$\hat{\phi}(t_w)=\sum_{x}a_{x,w}T_x$, then $a_{x,w}(q)=f_w(x)$. Moreover, there is a constant $N$
depending only on $\Waff$ such that $a_{x,w}\in(\bq^{1/2})^N\hat{\mathcal{A}}^-$ for all $x,w\in\Waff$.\end{prop}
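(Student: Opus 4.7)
The plan is to define $a_{x,w}\in\hat{\mathcal{A}}$ as the Laurent expansion at $\bq=\infty$ of the rational function $q\mapsto f_w(IxI)$, to set $\phi_1(t_w):=\sum_x a_{x,w}T_x$, and to verify (i) that this formal sum lies in $\HHh$, (ii) that the resulting assignment extends $\mathcal{A}$-linearly to an algebra homomorphism, and (iii) the specialization identity $a_{x,w}(q)=f_w(IxI)$ for $q$ large.

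First I would establish that $f_w(IxI)$ is a rational function of $q$. Plancherel inversion gives
\[
f_w(IxI)=\sum_{M}\dIntOver{\Oo_M}{\trace{\pi_\nu}{\pi_\nu(x^{-1})\pi_\nu(f_w)}\mu_{G/M}(\omega_\nu)\,d(\omega_\nu)}{\nu},
\]
where $\pi_\nu=\Ind_{P_M}^G(\omega\otimes\nu)$. By Lemma \ref{Lemma trace is regular} applied to $t_w\in J$, together with the fact that matrix coefficients of $\pi_\nu(x^{-1})$ acting on $\pi_\nu^I$ in the Iwahori-Matsumoto basis are Laurent polynomials in $q^{1/2}$ and the coordinates of $\nu$, the integrand is a regular function of $\nu$ times the Plancherel density. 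The iterated residue calculus used to prove Theorems \ref{theorem fd(1) integral} and \ref{theorem general G f_w(1) rational function with correct denominator} then extends verbatim, exhibiting $f_w(IxI)$ as a rational function of $q$ whose poles lie in a finite set of roots of unity depending only on $\Waff$, and whose denominator depends only on the two-sided cell containing $w$ by Proposition \ref{prop plancherel decomposition compatible with cell decomposition}. Expanding at $\bq=\infty$ defines the element $a_{x,w}\in\hat{\mathcal{A}}$.

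The algebra-map property and the specialization identity follow from the corresponding properties of $\tilde\phi$ at each prime power $q$: both sides of $\phi_1(t_wt_{w'})=\phi_1(t_w)\phi_1(t_{w'})$ are formal Laurent series whose specializations agree at infinitely many values of $\bq$, and so coincide. The definition of $\phi_1$ then extends $\mathcal{A}$-linearly to $J\otimes_\C\mathcal{A}$.

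The main obstacle is the uniform bound $a_{x,w}\in(\bq^{1/2})^N\hat{\mathcal{A}}^-$ with $N$ depending only on $\Waff$, together with the convergence $a_{x,w}\to 0$ as $\ell(x)\to\infty$ required for membership in $\HHh$. Both amount to asymptotic control of $f_w(IxI)$ as $q\to\infty$ and as $\ell(x)\to\infty$. I would combine the Harish-Chandra Schwartz estimate $|f_w(IxI)|\leq C_M\,\Xi(x)\sigma(x)^{-M}$ (valid for every $M\geq 0$) with $\Xi(x)\asymp\Delta(IxI)^{-1/2}$ and the inequality $q^{\ell(x)/2}q^{-\#W}\leq\Delta(IxI)\leq q^{\ell(x)/2}$ recalled in Section \ref{subsection the HC Schwartz algebra}; the $\sigma(x)^{-M}$-factor delivers the rapid decay in $\ell(x)$. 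The delicate point, and the heart of the proof, is bounding the degree of the Schwartz constant $C_M$ in $\bq^{1/2}$ uniformly in $w$ and $x$. This uniformity I would extract from the explicit residue formulas: since $\trace{\pi_\nu}{t_w}$ is a polynomial in $\nu$ with coefficients independent of $q$ by Theorem \ref{J summary theorem}(3), the only $q$-growth in the Plancherel integrand originates from the Plancherel density and from the matrix coefficients of $\pi_\nu(x^{-1})$, and the systematic cancellations exhibited in Theorems \ref{theorem fd(1) integral} and \ref{theorem general G f_w(1) rational function with correct denominator} ensure that the resulting rational function of $q$ has numerator of degree in $\bq^{1/2}$ bounded uniformly in $x$ and $w$ in terms of $\Waff$ alone.
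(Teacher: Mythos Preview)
Your overall plan---define $a_{x,w}$ as the Laurent expansion at $\bq=\infty$ of the rational function $q\mapsto f_w(x)$, check the specialization and algebra-map properties by comparison at infinitely many prime powers---is correct, and this part matches the paper. The genuine gap is in the decay $a_{x,w}\to 0$ as $\ell(x)\to\infty$ and the uniform top-degree bound.

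Your Schwartz-estimate argument does not close. The inequality $|f_w(x)|\leq C_M\,\Xi(x)\sigma(x)^{-M}$ is an \emph{analytic} bound for fixed $q$; you need the \emph{$(\bq^{-1/2})$-adic} statement that the top degree of the rational function $a_{x,w}$ tends to $-\infty$. Translating one into the other requires exactly that $C_M=C_M(f_w,q)$ have $q$-degree bounded uniformly in $w$, which you identify as ``the heart of the proof'' but then justify only by pointing back to Theorems \ref{theorem fd(1) integral} and \ref{theorem general G f_w(1) rational function with correct denominator}. Those theorems, however, treat integrals of $\trace{\pi_\nu}{t_w}$, a Laurent polynomial in $\nu$ with coefficients \emph{independent of $q$}; the integrand for $f_w(x)$ involves in addition $\pi_\nu(T_{x^{-1}})$, whose matrix coefficients on $\pi_\nu^I$ have $q$-degree growing linearly with $\ell(x)$, and nothing in those residue computations controls this growth. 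The claim that the ``systematic cancellations'' carry over verbatim is therefore unsupported.

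The paper avoids this entirely by a different, algebraic reduction. It first writes $f_w(x)=q^{-\ell(x)}(f_w\star T_{x^{-1}})(1)$, then rewrites $T_{x^{-1}}=\sum_{y\leq x^{-1}}(-1)^{\ell(x)-\ell(y)}q^{\ell(y)/2}Q_{y,x^{-1}}(q)C'_y$ via inverse Kazhdan--Lusztig polynomials, and uses $\tilde\phi\circ\phi_q\circ()^\dagger=\id$ on $H$ to replace $f_w\star T_{x^{-1}}$ by $\tilde\phi$ of an explicit element of $J$. The upshot is a finite formula
\[
f_w(x)=q^{-\ell(x)}\sum_{y\leq x^{-1}}(-1)^{\ell(x)}q^{\ell(y)/2}Q_{y,x^{-1}}(q)\sum_{r} h_{y,d_w,r}\,(f_w\star f_r)(1),
\]
expressing $f_w(x)$ as an $\mathcal{A}$-linear combination of values $f_z(1)$ with $z$ in the two-sided cell of $w$. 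Now Lemma \ref{lem f_w(1) polynomial growth and a1w bounded in q} bounds the top degree of each $f_z(1)$ uniformly in terms of $\Waff$, the $h_{y,d_w,r}$ have degree bounded by the $a$-function, and the explicit bound $\deg Q_{y,x^{-1}}\leq\tfrac12(\ell(x)-\ell(y)-1)$ gives
\[
\deg\bigl(q^{-\ell(x)}q^{\ell(y)/2}Q_{y,x^{-1}}(q)\bigr)\leq -\tfrac{\ell(x)+1}{2},
\]
yielding simultaneously the uniform top-degree bound and the required decay. This convolution-plus-$C'$-expansion trick is the missing idea; once you have it, no direct control of the $x$-dependence of the Plancherel integral is needed.
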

The most difficult part of the proof of the proposition is showing that $a_{x,w}\to 0$
as $\ell(x)\to\infty$.  To this end we have 
\begin{lem}
\label{lem f_w(1) polynomial growth and a1w bounded in q}
Let $w\in\Waff$. Then the degree in $q$ of the numerator of $f_w(1)$ is bounded uniformly in $w$ by some $N$ depending only on $\Waff$, and hence 
in the notation of Proposition \ref{prop phi1 existence} above, we have $a_{1,w}\in(\bq^{1/2})^N\hat{\mathcal{A}}^-$ for all $w\in\Waff$.
\end{lem}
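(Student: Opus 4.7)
The plan is to deduce the lemma from a uniform-in-$w$ asymptotic estimate of the form $f_w(1)=O(1)$ as $q\to\infty$, with implied constant depending on $w$ but not on $q$. Combined with Theorem~\ref{theorem general G f_w(1) rational function with correct denominator}, which shows that $f_w(1)$ is a rational function of $q$ whose denominator has degree bounded by some $D$ depending only on $\Waff$, such an asymptotic forces the numerator to have degree at most $D$, uniformly in $w$. Translating between expansions in $\bq$ and $\bq^{1/2}$ then yields the claimed $N$ in the statement $a_{1,w}\in(\bq^{1/2})^N\hat{\mathcal{A}}^-$.

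The crucial input is that the trace $\nu\mapsto\trace{\pi_\nu}{t_w}$, for $\pi_\nu=\Ind_P^G(\omega\otimes\nu)$ tempered, is a Laurent polynomial in $\nu$ whose coefficients are \emph{independent of $q$}. Indeed, by Theorem~\ref{J summary theorem}(3), for any simple $J$-module $E$ the number $\trace{E}{t_w}$ is the constant term of the polynomial $(-\bq^{1/2})^{a(\cc(E))}\trace{M}{C_w}\in\C[\bq^{1/2}]$, a $q$-independent integer; and as $\nu$ varies, the simple $J$-modules $E_\nu$ attached to $\pi_\nu$ form a $q$-independent family parametrized by points of $M_\cc^\vee$ via Xi's description of $J_\cc$ as a matrix algebra over the representation ring of $M_\cc$. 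Restricting to the compact unitary torus inside $(\C^\times)^k$, the trace is therefore bounded by a constant $C_w$ depending only on $w$, equal to the sum of absolute values of its coefficients.

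I would then apply the Plancherel formula. By Opdam's formula~\eqref{Opdam integral general} (or Theorem~\ref{theorem AubPly Plancherel measure form} in type $A$), on the compact torus each factor of the form $|1-\alpha^\vee(\nu)|^2/|1-q_\alpha\alpha^\vee(\nu)|^2$ in the Plancherel density is bounded by $4/(q_\alpha-1)^2=O(q^{-2})$, and by Theorem~\ref{FOS theorem} each formal degree $d(\omega)$ is a rational function of $q$ with bounded behavior at $q=\infty$. Lemma~\ref{lem finitely-many discrete series} restricts the Plancherel sum to finitely many orbits $(M,\Oo)$. Combining these estimates with $|\trace{\pi_\nu}{t_w}|\leq C_w$ and the bounded volume of the compact torus gives $|f_w(1)|=O(1)$ as $q\to\infty$, completing the argument. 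The main obstacle will be the uniform control of the Plancherel density for non-generic orbits $\Oo$ supported on residual cosets strictly smaller than the full compact torus, where the interaction between the coset parametrization and the formal degree $d(\omega)$ must be unwound carefully using the structure theory of residual cosets from \cite{OpdamSpectral} in order to rule out hidden positive powers of $q$.
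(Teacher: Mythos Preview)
Your approach is essentially the paper's. Both arguments rest on the observation that $\trace{\pi_\nu}{t_w}$ is a Laurent polynomial in $\nu$ with $q$-independent coefficients, so that its supremum $C_w$ over the compact torus does not depend on $q$; both then bound the Plancherel density pointwise by a function of $q$ alone and compare with the rational form of $f_w(1)$ coming from Theorem~\ref{theorem general G f_w(1) rational function with correct denominator} to bound the numerator degree.

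The one place the paper is more careful: it does not aim for $O(1)$. Your estimate $4/(q_\alpha-1)^2$ on each $\mu$-function factor is correct, but ``bounded behaviour at $q=\infty$'' for $d(\omega)$ is not what Theorem~\ref{FOS theorem} asserts, and together with the normalizing prefactors the assembled pointwise bound on the density is a priori only $O(q^m)$ for some $m$ depending on $\Waff$---which is precisely what your acknowledged ``main obstacle'' would have to resolve. The paper simply accepts this weaker bound: it bounds $|f_w(1)|$ by a rational function $\frac{h_1(q)+h_2(q^{-1})}{k_1(q)+k_2(q^{-1})}$ whose numerator and denominator degrees depend only on $\Waff$, and then compares with $f_w(1)=\frac{f_1(q)+f_2(q^{-1})}{d_1(q)+d_2(q^{-1})}$ to conclude $\deg f_1\leq \deg d_1+\deg h_1$, both right-hand terms bounded in terms of $\Waff$. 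This weaker bound is exactly what your argument actually delivers, and it already suffices for the lemma. (As an aside, the sharper $O(1)$ bound you state is in fact true, via a shorter route than either argument: the Plancherel measure is positive and $\int\dim\pi^I\,\mathrm{d}\pi=1_I(1)=1$, so its total mass on the $I$-spherical tempered dual is at most $1$, whence $|f_w(1)|\leq C_w$ directly.)
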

\begin{rem}
In type $A$,
the Lemma follows immediately from the order of integration given after the proof 
of Lemma \ref{lemma sum of z_i powers must be -N}.
\end{rem}
\begin{proof}[Proof of Lemma \ref{lem f_w(1) polynomial growth and a1w bounded in q}]
Let $\pi_\omega=\Ind_P^G(\nu\otimes\omega)$ be a tempered $I$-spherical representation of $G$ arising by induction 
from a parabolic $P$ such that $\rank A_P=k$. Let $\T_\omega^k$ be the compact torus parametrizing twists of $\omega$, 
and let $t_w$ be given. By Lemma \ref{Lemma trace is regular}, $\trace{\pi}{f_w}$ is a regular function
on $\T^k$, \textit{i.e.} a Laurent polynomial in the coordinates $z_1,\dots, z_k$ on $\T^k$. The coefficients
of this Laurent polynomial are independent of $q$, as $J$ and its representation theory are independent of $q$. As we have 
\[
|1-q^{r}|\leq |z_i-q^{r}z_1^{e_1}\cdots z_k^{e_k}|\leq |1+q^{r}|
\]
for all $z_i\in\T$ and $r$, we may bound, for large $q$, the absolute value $|\mu_{M_P}(z)|$ of the Plancherel 
density for 
$M_P$ by a rational function $U_{M_P,\omega}(q)$. By Theorem \ref{FOS theorem}, we may do the same for formal 
degrees $d(\omega)$, bounding $|d(\omega)|$ by a rational function $D_\omega(q)$. Thus we define a rational 
function of $q$ by 
\[
\frac{h_1(q)+h_2(q^{-1})}{k_1(q)+k_2(q^{-1})}=\sum_{\omega}U_{M_P,\omega}(q)D_\omega(q)\max_{z\in\T_\omega^k}
|\trace{\pi_\omega}{f_w}|,
\]
where $h_1,k_1\in\C[q]$ and $h_2,k_2\in q^{-1}\C[q^{-1}]$, and the sum is over the finitely many, up to unitary twist, pairs $(M,\omega)$, for $\omega\in\mathcal{E}_2(M)$, such that $\trace{\pi_\omega}{f_w}\neq 0$.

Fix $w\in\Waff$. Then
\[
|f_w(1)|=\left|\sum_{(P,\omega)}\int_{\Oo(\omega)}\trace{\pi_\omega}{f_w}d(\omega)\,\mathrm{d}\mu_{M_P}(z)\right|
\leq
\frac{h_1(q)+h_2(q^{-1})}{k_1(q)+k_2(q^{-1})}.
\]
Crucially, this expression holds for all $q\gg 1$.

On the other hand, by Corollary \ref{corollary fw(1) denominator divides poincare} and Theorem \ref{theorem 
general G f_w(1) rational function with correct denominator}, $f_w(1)$ is a rational function of $q$,
and for $q$ sufficiently large, we may write
\[
|f_w(1)|=\frac{f_1(q)+f_2(q^{-1})}{d_1(q)+d_2(q^{-1})}
\]
where $f_1,d_1\in\C[q]$ and $f_2,d_2\in q^{-1}\C[q^{-1}]$. Therefore for all $q\gg 1$ we have
\begin{equation}
\label{eqn Lem 8 big inequality}
|(f_1(q)+f_2(q^{-1}))||(k_1(q)+k_2(q^{-1}))|\leq 
|(d_1(q)+d_2(q^{-1})||(h_1(q)+h_1(q^{-1}))|.
\end{equation}
We claim that this implies 
\begin{equation}
\label{eqn Lem 8 target inequality}
|f_1(q)k_1(q)|\leq |d_1(q)h_1(q)|
\end{equation}
for $q$ sufficiently large.
Indeed, let $\epsilon>0$ be given and choose $q\gg 1$ such that
\[
\left||(f_1(q)+f_2(q^{-1}))||(k_1(q)+k_2(q^{-1}))|-|f_1(q)k_1(q)|\right|\leq \epsilon
\]
and
\[
\left||(d_1(q)+d_2(q^{-1}))||(h_1(q)+h_2(q^{-1}))|-|d_1(q)h_1(q)|\right|\leq \epsilon,
\]
and \eqref{eqn Lem 8 big inequality} holds. Then we have
\begin{align*}
|f_1(q)||k_1(q)|
&\leq |(f_1(q)+f_2(q^{-1}))||(k_1(q)+k_2(q^{-1}))|+\epsilon
\\
&\leq |(d_1(q)+d_2(q^{-1}))||(h_1(q)+h_2(q^{-1}))|+\epsilon
\\
&\leq |d_1(q)||h_1(q)|+2\epsilon,
\end{align*}
which proves \eqref{eqn Lem 8 target inequality}.

Therefore
\[
\deg f_1\leq \deg f_1+\deg k_1\leq\deg d_1+\deg h_1.
\]
Now, the denominator of $f_w(1)$, and hence $\deg d_1$, depends only on the two-sided cell containing $w$,
again by Corollary \ref{corollary fw(1) denominator divides poincare} and Theorem \ref{theorem 
general G f_w(1) rational function with correct denominator}. We can also bound $\deg h_1$ uniformly 
in terms of $\Waff$, as it depends only on the Plancherel measure and the finitely many possible formal
degrees appearing in the parametrization of the $I$-spherical part of the tempered dual of $G$.
This proves the lemma.
\end{proof}
If $t_w\in t_dJt_{d'}$ for $d\neq d'$, then $t_w$ is a commutator. We record this observation as
\begin{lem}
\label{lem only diagonal f_w have nonzero trace}
We have that $f_w(1)=0$ unless $f_d\star f_w\star f_d\neq 0$ for some distinguished involution $d$.
\end{lem}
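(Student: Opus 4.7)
The plan is to use the decomposition $1_J = \sum_{d \in \mathcal{D}} t_d$ into orthogonal idempotents to express $t_w = \sum_{d,d'} t_d t_w t_{d'}$, show by the cyclicity of the trace that only the ``diagonal'' terms $t_d t_w t_d$ contribute to $\trace{\pi}{f_w}$ for any tempered $\pi$, and then invoke the Plancherel formula.

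More precisely, since $1_J = \sum_{d \in \mathcal{D}} t_d$, in $J$ we have
\[
t_w = 1_J \cdot t_w \cdot 1_J = \sum_{d, d' \in \mathcal{D}} t_d t_w t_{d'}.
\]
Applying the algebra homomorphism $\tilde{\phi}\colon J \to \mathcal{C}^{I \times I}$ from Theorem \ref{BK nsp theorem} yields the corresponding identity $f_w = \sum_{d,d'} f_d \star f_w \star f_{d'}$ in $\mathcal{C}^{I\times I}$. For any irreducible tempered representation $\pi$ with $\pi^I \neq 0$, the space $\pi^I$ is finite-dimensional, so the trace on $\End(\pi^I)$ satisfies the cyclic identity $\trace{\pi}{f_d \star f_w \star f_{d'}} = \trace{\pi}{f_{d'} \star f_d \star f_w}$. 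Since $t_{d'} t_d = 0$ whenever $d \neq d'$ by orthogonality of the idempotents $\{t_d\}_{d \in \mathcal{D}}$, we have $f_{d'} \star f_d = 0$ as an element of $\mathcal{C}^{I \times I}$, hence the off-diagonal contributions vanish and
\[
\trace{\pi}{f_w} = \sum_{d \in \mathcal{D}} \trace{\pi}{f_d \star f_w \star f_d}.
\]

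Now assume the hypothesis, so $f_d \star f_w \star f_d = 0$ in $\mathcal{C}^{I\times I}$ for every distinguished involution $d$. Then $\trace{\pi}{f_w} = 0$ for every irreducible tempered representation $\pi$ with $\pi^I \neq 0$, and by Lemma \ref{lem finitely-many discrete series} these are the only $\pi$ contributing to the Plancherel decomposition of $f_w$. Evaluating the Plancherel formula at $g = 1$, where $R_1 f_w = f_w$, therefore gives $f_w(1) = \sum_{(P, \Oo)} f_{w, M_P, \Oo}(1) = 0$, which is the claim.

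The potential obstacle is ensuring that the $\pi^I$ are finite-dimensional so that the cyclic trace argument applies, but this is standard for tempered $I$-spherical representations of $G$; everything else is a routine manipulation of Lusztig's idempotent decomposition together with the Plancherel formula.
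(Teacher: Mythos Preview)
Your proof is correct and follows essentially the same approach as the paper: the paper's one-line justification is the observation that if $t_w\in t_dJt_{d'}$ for $d\neq d'$ then $t_w$ is a commutator, which is exactly your cyclicity-of-trace argument in slightly different packaging. You have simply spelled out the Plancherel step that the paper leaves implicit.
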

Now we can prove the proposition.
\begin{proof}[Proof of Proposition \ref{prop phi1 existence}]
Let $w\in\Waff$.
Write $\tilde{\phi}(t_w)=f_w=\sum_{x}A_{x,w}T_x$ as Schwartz functions on $G$ so that $A_{x,w}=f_w(x)$. We must show that there is a unique element $a_{x,w}\in\hat{\mathcal{A}}$ such that $a_{x,w}(q)=A_{x,w}$ as complex numbers.
We will then check that $a_{x,w}\to 0$ rapidly enough as $\ell(x)\to\infty$ for 
$\sum_{x}a_{x,w}T_x$ to define an element of $\HHh$. 

By Corollary \ref{corollary fw(1) denominator divides poincare} and Theorem \ref{theorem general G f_w(1) rational function with correct denominator} , there is a formal power series
in $\hat{\mathcal{A}}^-$ with constant term equal to $1$ that specializes to the denominator of $A_{1,w}$
when $\bq=q$. Moreover, there is a unique formal Laurent series $a_{1,w}\in\hat{\mathcal{A}}$
such that $a_{1,w}(q)=A_{1,w}$ for all prime powers. 
Indeed, $a_{1,w}$ is convergent
for $\bq=q$, and the difference of any two such series defines a meromorphic function of $\bq^{-1/2}$ outside the unit disk with zeros at $q=p^r$ for every $r\in\N$. As these prime powers accumulate at 
$\infty$, such a meromorphic function must be identically zero.

If $f\in\Cc^{I\times I}$ is a Harish-Chandra Schwartz function, then
\begin{multline*}
q^{-\ell(x)}(f\star T_{x^{-1}})(1)=q^{-\ell(x)}\dIntOver{G}{f(g)T_{x^{-1}}(g^{-1})}{\mu_I(g)}
\\
=q^{-\ell(x)}\dIntOver{IxI}{f(g)}{\mu_I(g)}
=q^{-\ell(x)}\mu_I(IxI)f(x)
=f(x).
\end{multline*}
By definition, $f_w(x)=A_{x,w}$. 

On the other hand, according to Lemma \ref{lemma tilde phi is inverse to phi on H}, we have, for $\omega(x^{-1})_f$ as defined above Lemma \ref{lem Goldman involution exchanges KL bases},
\begin{align}
q^{-\ell(x)}(f_w\star T_{x^{-1}})(1)
&=
q^{-\ell(x)}\left(\tilde{\phi}(t_w)\star\tilde{\phi}\left(\phi_q({}^\dagger T_{x^{-1}})\right)\right)(1)
\nonumber
\\
&=
q^{-\ell(x)}\tilde{\phi}\left(t_w\phi_q({}^\dagger T_{x^{-1}})\right)(1)
\nonumber
\\
&=
q^{-\ell(x)}\tilde{\phi}\left(t_w\phi_q\left(\sum_{y\leq x^{-1}}q^{\frac{\ell(y)}{2}}(-1)^{\ell(x^{-1})-\ell(y)}Q_{y,x^{-1}}(q){}^\dagger C'_y\right)\right)(1)
\label{prop lands in completion change to C'}
\\
&=
q^{-\ell(x)}\tilde{\phi}\left(t_w\phi_q\left(\sum_{y\leq x^{-1}}q^{\frac{\ell(y)}{2}}(-1)^{\ell(x^{-1})}(-1)^{\ell(\omega(x^{-1})_f)}Q_{y,x^{-1}}(q)(C_y)\right)\right)(1)
\label{prop applied Goldman}
\\
&=
(-1)^{\ell(x^{-1})}(-1)^{\ell(\omega(x^{-1})_f)}q^{-\ell(x)}\tilde{\phi}\left(t_w\sum_{y\leq x^{-1}}q^{\frac{\ell(y)}{2}}Q_{y,x^{-1}}(q)\sum_{\substack{r \\ d\in\mathcal{D} \\ a(d)=a(r)}}h_{y,d,r}t_r\right)(1)
\label{prop lands in completion calculations after applying phi}
\\
&=
(-1)^{\ell(x^{-1})+\ell(\omega(x^{-1})_f)}q^{-\ell(x)}\tilde{\phi}\left(\sum_{y\leq x^{-1}}q^{\frac{\ell(y)}{2}}Q_{y,x^{-1}}(q)\sum_{\substack{r\sim_L d \\ d\in\mathcal{D} \\ a(d)=a(r)=a(w)}}h_{y,d,r}t_wt_r\right)(1)
\nonumber
\\
&=
(-1)^{\ell(x^{-1})+\ell(\omega(x^{-1})_f)}q^{-\ell(x)}\sum_{y\leq x^{-1}}q^{\frac{\ell(y)}{2}}Q_{y,x^{-1}}(q)\sum_{\substack{r\sim_L d_w \\ a(r)=a(w)}}h_{y,d_w,r}(f_w\star f_r)(1),
\label{prop lands in completion final}
\end{align}
where $d_w$ is the unique distinguished involution in the right cell containing $w$.

In line \eqref{prop lands in completion change to C'}, we rewrote $T_{x^{-1}}$
in terms of the $C'$-basis of $H$, using the inverse Kazhdan-Lusztig polynomials $Q_{y,x^{-1}}$.
In line \eqref{prop applied Goldman}, we applied the involution ${}^\dagger(-)$ (see Lemma \ref{lem Goldman involution exchanges KL bases}).
In line 
\eqref{prop lands in completion calculations after applying phi} we applied Lusztig's map $\phi_q$,
and then in line \eqref{prop lands in completion final}, we applied the map $\tilde{\phi}$.
Also in line \eqref{prop lands in completion final}, we used that left (respectively right) cells give left (respectively right) ideals of $J$,
and so $t_wt_r$ is an integral linear combination of $t_{z^{-1}}$, with 
$d\sim_L z^{-1}\sim_R d_w$. By lemma \ref{lem only diagonal f_w have nonzero trace},
$f_{z^{-1}}(1)\neq 0$ only if $d=d_w$, that is, $z^{-1}\sim_L d_w$.

We use \eqref{prop lands in completion final} to define 
$a_{x,w}\in\hat{\mathcal{A}}$. By the same arguments as above, $a_{x,w}$ is unique
and defines a meromorphic function of $\bq^{-1/2}$. It remains to show that as $\ell(x)\to\infty$,
$a_{x,w}\to 0$ in the $(\bq^{-1/2})$-adic topology. This follows in fact from \eqref{prop lands in completion final}. Indeed, the product $f_w\star f_r$ is an $\N$-linear combination of functions $f_z$, and the values $f_z(1)$ are rational functions of $q$, the numerators of which have 
uniformly bounded degree in $q$ by Lemma \ref{lem f_w(1) polynomial growth and a1w bounded in q}. The polynomials
$h_{y,d_w,r}$ have bounded degree in $q$ (for example in terms of the $a$-function). Finally,
the degree in $q$ of 
\[
q^{-\ell(x)}q^{\frac{\ell(y)}{2}}Q_{y,x^{-1}}(q)
\]
is at most 
\begin{equation}
\label{eq decay rate}
q^{-\ell(x)}q^{\frac{\ell(y)}{2}}q^{\frac{\ell(x^{-1})-\ell(y)-1}{2}}=
q^{-\ell(x)}q^{\frac{\ell(y)}{2}}q^{\frac{\ell(x)-\ell(y)-1}{2}}=q^{\frac{-\ell(x)-1}{2}}\to 0
\end{equation}
as $\ell(x)\to\infty$.
This completes the definition of $\hat{\phi}$ as a 
map of $\mathcal{A}$-modules. 

It is easy to see that $\hat{\phi}$ is a morphism of rings, essentially because
$\tilde{\phi}$ is. Indeed, we have
\begin{equation}
\label{eq phi1 hom first product}
\hat{\phi}(t_wt_{w'})=\sum_{z}\gamma_{w,w',z^{-1}}\sum_{x}a_{x,z}T_x
\end{equation}
while on the other hand
\begin{equation}
\label{eq phi1 hom second product}
\hat{\phi}(t_w)\hat{\phi}(t_{w'})=\sum_{x}a_{x,w}T_x\cdot\sum_{y}a_{y,w'}T_y
\end{equation}
and when $\bq=q$, we have that \eqref{eq phi1 hom second product} becomes by definition
\[
\tilde{\phi}(t_w)\star\tilde{\phi}(t_{w'})=\tilde{\phi}\left(\sum_{z}\gamma_{w,w',z^{-1}}t_z\right)=
\sum_{z}\sum_{x}\gamma_{w,w',z^{-1}}A_{x,z}T_x.
\]
Hence for infinitely many prime powers we have that the specializations of \eqref{eq phi1 hom first product}
agrees with those of \eqref{eq phi1 hom second product}, and hence \eqref{eq phi1 hom first product}
is equal to \eqref{eq phi1 hom second product} in $\HHh$. A similar argument shows that $\hat{\phi}$ preserves 
units.
\end{proof}
\begin{rem}
The proof, specifically \eqref{eq decay rate}, gives a necessary condition for an element of $\HHh$ to belong to 
the image of $\hat{\phi}$: the coefficients must decay asymptotically at least as fast as 
$\bq^{-\frac{\ell(x)}{2}}$.
\end{rem}
\begin{prop}
\label{prop phi_1=phi inverse}
There is a commutative diagram
\begin{center}
\begin{tikzcd}
\HH\arrow[r,"\phi"]&J\otimes_\C\mathcal{A}\arrow[r, "\hat{\phi}"]\arrow[d, "\phi^{-1}"]&\HHh\\
&\HHh_{C}\arrow[r, "{}^\dagger(-)"]&\HHh_{C'}\arrow[u, hook],
\end{tikzcd}
\end{center}
and we have $\hat{\phi}={}^\dagger(-)\circ\phi^{-1}$ as morphisms of $\mathcal{A}$-algebras $J\otimes_\C\mathcal{A}\to\HHh$. In particular, $a_{x,w}$ has integer coefficients for all $x,w\in\Waff$.
\end{prop}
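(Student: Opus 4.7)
Both $\phi_1$ and $\psi := ()^\dagger \circ \phi^{-1}$ are $\mathcal{A}$-algebra maps $J \otimes_\C \mathcal{A} \to \HHh$: for $\psi$ this follows from Lusztig's extension of $\phi$ to the isomorphism $\HHh_C \to \JJ$ (restrict to $J \otimes_\C \mathcal{A}$), followed by the Goldman involution, which by Lemma \ref{lem Goldman involution exchanges KL bases} sends $\HHh_C$ into $\HHh_{C'} \subset \HHh$ (Lemma \ref{lem Cw contained in Tw completion}). Since both maps are $\mathcal{A}$-linear, it is enough to check agreement on each $t_w$. Expanding in the $T$-basis, $\phi_1(t_w) = \sum_x a_{x,w} T_x$ and $\psi(t_w) = \sum_x b_{x,w} T_x$ with $a_{x,w}, b_{x,w} \in \hat{\mathcal{A}}$; the goal is $a_{x,w} = b_{x,w}$.

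The strategy is specialization and density. By Proposition \ref{prop phi1 existence}, $a_{x,w}(q) = f_w(x)$ for every prime power $q$. If the analogous statement $b_{x,w}(q) = f_w(x)$ can be proven at every prime power, the difference $a_{x,w} - b_{x,w} \in \hat{\mathcal{A}}$ is a formal Laurent series in $\bq^{-1/2}$ vanishing on a sequence tending to infinity, and as in the proof of Proposition \ref{prop phi1 existence} this forces $a_{x,w} = b_{x,w}$. The task therefore reduces to identifying, at each prime power $q$, the Schwartz function arising from $\psi(t_w)|_{\bq = q}$ with $f_w$.

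To this end I would upgrade Lemma \ref{lemma tilde phi is inverse to phi on H}---which says $\tilde{\phi} \circ \phi_q \circ ()^\dagger = \iota$ on $H$---to a statement valid on completions. The inclusion $\iota$ extends to the $(\bq^{-1/2})$-adic completion of $H$ (the decay rate \eqref{eq decay rate}, together with the description of the Harish-Chandra Schwartz algebra in Section \ref{subsection the HC Schwartz algebra}, ensures that any element with such decay defines a Schwartz function on $G$), and via Lusztig's isomorphism $\HHh_C \to \JJ$ the composite $\tilde{\phi} \circ \phi_q \circ ()^\dagger$ extends to the same completion. Agreement on the dense subalgebra $H$ then propagates to the completion, and specializing the extended identity at $\phi_q^{-1}(t_w)$ yields $f_w = \tilde{\phi}(t_w) = \iota(\psi(t_w)|_{\bq = q})$ as Schwartz functions on $G$; reading off the value at $x$ gives $f_w(x) = b_{x,w}(q)$. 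The integer-coefficients corollary then follows at once: $\phi^{-1}$ and Goldman are both defined over $\Z$ (Lusztig's structure constants $h_{x,y,z}$ and $\gamma_{x,y,z}$ are integral), so $\psi(t_w)$ has $T$-coefficients in $\Z((\bq^{-1/2}))$, and the equality just proved transfers this integrality to $a_{x,w}$.

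The main obstacle is the continuity upgrade in the preceding paragraph. Should that prove delicate, an alternative algebraic route uses the trace-orthogonality $\tau(T_y T_{x^{-1}}) = \bq^{\ell(x)}\delta_{x,y}$ of the $T$-basis (with $\tau(T_z) = \delta_{z,1}$) together with the identity $\psi(t_w) \cdot T_{x^{-1}} = \psi\bigl(t_w \cdot \phi(T_{x^{-1}}^\dagger)\bigr)$, valid because $\psi$ is multiplicative and $\psi(\phi(h)) = h^\dagger$. Writing the finite expansion $t_w \phi(T_{x^{-1}}^\dagger) = \sum_z \delta_z t_z$ in $J \otimes_\C \mathcal{A}$, this gives $\bq^{\ell(x)} b_{x,w} = \sum_z \delta_z \, b_{1,z}$; the parallel manipulation of $(f_w \star T_{x^{-1}})(1)$ via Lemma \ref{lemma tilde phi is inverse to phi on H} yields $q^{\ell(x)} f_w(x) = \sum_z \delta_z(q) f_z(1)$. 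Comparison then reduces the entire problem to the diagonal case $b_{1,z}(q) = f_z(1)$, to be checked directly from the expansion $\phi^{-1}(t_z) = \sum_y c_{y,z} C_y \in \HHh_C$ by matching $b_{1,z} = \sum_y (-1)^{\ell(y)} c_{y,z} \bq^{-\ell(y)/2}$ against the $x = 1$ specialization of the formula for $a_{1,z}(q)$ obtained in the proof of Proposition \ref{prop phi1 existence}.
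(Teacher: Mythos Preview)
Your approach and the paper's share the same skeleton---a density argument---but you run it in the wrong order, and this creates a genuine gap.

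You try to specialize to each prime power $q$ first and then invoke density: extend the identity $\tilde{\phi}\circ\phi_q\circ()^\dagger=\iota$ from $H$ to ``the $(\bq^{-1/2})$-adic completion of $H$'', and evaluate at $\phi_q^{-1}(t_w)$. But once $\bq$ has been set to a number, there is no $(\bq^{-1/2})$-adic topology left; the only natural topology on the target side is the Schwartz topology, and to know that $\psi(t_w)|_{\bq=q}$ converges there and that the action of this infinite sum on a tempered $\pi$ agrees with the $J$-action of $t_w$ is precisely what you are trying to prove. Your alternative route reduces to the base case $b_{1,z}(q)=f_z(1)$, but this is the same continuity issue in disguise: one side is the coefficient of $T_1$ in an infinite $C'$-expansion, the other is a Plancherel integral of $\trace{\pi}{t_z}$, and matching them again requires that the infinite sum acts as $t_z$ on each tempered $\pi$.

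The paper inverts the order: work entirely in the $(\bq^{-1/2})$-adic topology on $\JJ$ and $\HHh$, and specialize only implicitly through the definition of $\phi_1$. On $\phi(\HH)\subset J\otimes_\C\mathcal{A}$ the equality $\phi_1=\psi$ is immediate from Lemma~\ref{lemma tilde phi is inverse to phi on H} (both sides send $\phi(h)$ to $h^\dagger$); the map $\psi$ obviously extends continuously to $\JJ$ (it is $()^\dagger\circ\phi^{-1}$ and $\phi$ is Lusztig's isomorphism $\HHh_C\simeq\JJ$); and the new work is to show that $\phi_1$ also extends continuously to $\JJ$. This is where the uniform bound $a_{x,w}\in(\bq^{1/2})^N\hat{\mathcal{A}}^-$ from Proposition~\ref{prop phi1 existence} earns its keep: it guarantees that $\sum_w b_w a_{y,w}$ is a well-defined Laurent series whenever $b_w\to 0$, and that the extended $\phi_1$ is continuous. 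Density of $\phi(\HH)$ in $\JJ$ then finishes.
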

\begin{proof}
The second claim follows from the first if we show that $\hat{\phi}$ extends to a continuous morphism
$\JJ\to\HHh$, by density of $\phi(\HH)$ in $\JJ\simeq\HHh_C$, and the third claim follows from the second and 
the fact that the completions we consider are actually defined over $\Z$ \cite[Thm. 2.8]{affineII}.

Note that as $\tilde{\phi}\circ\phi_q={}^\dagger(-)$ on $H$ for all $q$, we have that $\hat{\phi}={}^\dagger(-)\circ\phi^{-1}$ on
$\phi(\HH)$. This says that the diagram commutes.

We now show that $\hat{\phi}$ extends to a continuous map $\JJ\to\HHh_C$. Let $\sum_{w}b_wt_w$ define an element 
of $\JJ$ and define
\[
\hat{\phi}\left(\sum_{w}b_wt_w\right)=\sum_{y}b'_yT_y,
\]
where
\[
b_y'=\sum_{w}b_wa_{y,w}.
\]
We must first show that this infinite sum of elements of $\hat{\mathcal{A}}$ is well-defined. By 
Lemma \ref{lem f_w(1) polynomial growth and a1w bounded in q} and \eqref{prop lands in completion final},
we have that there is $M\in\N$ such that $a_{y,w}\in(\bq^{1/2})^M\hat{\mathcal{A}}^-$ for all $y,w$.
Therefore $b'_y$ is well-defined, and as $a_{y,w}\to 0$ as $\ell(y)\to\infty$, 
we have $b'_y\to 0$ as $\ell(y)\to\infty$. Therefore $\hat{\phi}$ extends to $\JJ$.

To show continuity, it suffices
to show that if $\{\sum_{w}b_{w,n}t_{w}\}_n$ is a sequence of elements of $\JJ$ tending to 
$0$ as $n\to\infty$, then 
\[
\sum_{w}b_{w,n}\hat{\phi}(t_w)=\sum_{y}b'_{y,n}T_y\to 0
\]
as $n\to\infty$ in $\HHh$, where $b'_{y,n}=\sum_{w}b_{w,n}a_{y,w}$. For all $R>0$, there is
$N>0$ such that $n>N$ implies $b_{x,n}\in(\bq^{-1/2})^R\hat{\mathcal{A}}^-$ for all $x$. We have seen that there is $M$ depending
only on $\Waff$ such that $a_{y,w}\in(\bq^{1/2})^M\hat{\mathcal{A}}^-$ for all $w,y$.
Therefore $b'_{y,n}\to 0$ as $n\to\infty$, because $b_{w,n}\to 0$ as $n\to\infty$.
\end{proof}

Note that Proposition \ref{prop phi_1=phi inverse} means in particular that $\hat{\phi}(J)\subset\HHh_{C'}$.
\begin{cor}
\label{cor t1 formula}
We have
\[
(\phi\circ^\dagger(-))^{-1}(t_1)=f_1(1)\sum_{w\in\Waff}(-1)^{\ell(w)}q^{-\ell(w)}T_w.
\]
\end{cor}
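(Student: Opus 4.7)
The plan is to use Proposition \ref{prop phi_1=phi inverse}, which identifies $(\phi \circ ()^\dagger)^{-1}$ with $\phi_1$, to reduce the statement to a direct Plancherel computation. By Proposition \ref{prop phi1 existence}, $\phi_1(t_1) = \sum_x a_{x,1} T_x$ with $a_{x,1}(q) = f_1(x)$ for every prime power $q$, and uniqueness of these coefficients as meromorphic functions of $\bq^{-1/2}$ (argued as in the proof of Proposition \ref{prop phi1 existence}) reduces the corollary to the pointwise identity
\[
f_1(x) = (-1)^{\ell(x)} q^{-\ell(x)} f_1(1)
\]
for all $x \in \Waff$ and all prime powers $q$.

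To set up the spectral computation, I would observe that $a(1) = 0$ puts $1$ in a two-sided cell that, under Lusztig's bijection, corresponds to the regular unipotent class of $\Gd$. Proposition \ref{prop plancherel decomposition compatible with cell decomposition} then implies that in the Plancherel decomposition of $f_1$, only those summands $(f_1)_{M,\Oo}$ survive whose KL-parameter has regular unipotent part. Since $Z_{\Gd}(u)$ contains no non-central torus when $u$ is regular, this forces $M = G$ and $s \in Z(\Gd)$, and the contributing discrete series $\omega$ are the Steinberg representation and its unitary unramified-character twists; each has $\dim\omega^I = 1$ with $T_w$ acting on $\omega^I$ by $(-1)^{\ell(w)}$.

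For each such $\omega$, the element $t_1$ acts on the $J$-module $\omega^I$ as the unit of the relevant direct summand of $J$, so $\omega(f_1)$ is the rank-one projection $P_{\omega^I}$. The discrete-series Plancherel contribution is then
\[
f_1(x) = d(\omega)\,\mathrm{tr}\bigl(\omega(x^{-1}) P_{\omega^I}\bigr) = d(\omega)\,c_\omega(x^{-1}),
\]
where $c_\omega$ is the Iwahori matrix coefficient of $\omega$ normalized to $1$ at the identity. Unwinding via $\omega(T_x) v = \vol(IxI)\,\omega(x) v = q^{\ell(x)} \omega(x) v = (-1)^{\ell(x)} v$ for a unit $v \in \omega^I$ gives $c_\omega(x^{-1}) = (-1)^{\ell(x)} q^{-\ell(x)}$, using $\ell(x^{-1}) = \ell(x)$. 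Evaluating at $x = 1$ then pins down the overall constant $f_1(1) = d(\omega)$, from which the pointwise identity follows.

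The principal obstacle is reconciling normalizations between the Hecke-algebraic form $T_x = 1_{IxI}$ and the operator-theoretic Plancherel formula, along with aggregating contributions from all relevant Steinberg twists (which may form an orbit of positive dimension when $Z(\Gd)$ is disconnected or $\G$ has a nontrivial center). Each such $\omega$ yields the same sign-character dependence $(-1)^{\ell(x)} q^{-\ell(x)}$ on $x$, so summing over the orbit only modifies the leading constant, which is then pinned down by the value at $x = 1$ as above.
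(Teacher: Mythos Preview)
Your spectral argument is correct but takes a genuinely different route from the paper's. The paper does no further Plancherel computation at all: it simply specializes formula \eqref{prop lands in completion final} at $w=1$, where $d_1=1$, $h_{y,1,r}=\delta_{y,r}$, and the constraint $a(r)=0$ force $y=r=1$, collapsing everything to $a_{x,1}=(-1)^{\ell(x)}q^{-\ell(x)}Q_{1,x^{-1}}(q)\,f_1(1)$; the elementary identity $Q_{1,x}(\bq)=1$ (deduced from $\sum_{y\le x}(-1)^{\ell(y)}P_{y,x}=0$) then finishes in two lines of algebra. Your approach instead identifies $f_1$ directly as the Steinberg wave packet, which is more conceptual---it exhibits the answer as the Iwahori matrix coefficient of $\St$---but re-enters the harmonic analysis that Proposition~\ref{prop phi1 existence} was designed to package once and for all.

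One genuine imprecision to tighten: your claim that aggregating over the unramified twists $\St\otimes\chi$ ``only modifies the leading constant'' is not correct. The matrix coefficient of $\St\otimes\chi$ at $x^{-1}$ carries an extra factor $\chi(x)^{-1}$, so integrating over the orbit yields $(-1)^{\ell(x)}q^{-\ell(x)}\int\chi(x)^{-1}\,d\chi$, which \emph{vanishes} when $x$ has nontrivial image in $\Omega=\Waff/W_{\mathrm{aff}}$ rather than contributing the same $x$-dependence. (The paper's own reduction to $y=1$ is likewise implicitly in the non-extended setting, so this is not a discrepancy between your answer and the corollary as intended.)
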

\begin{proof}
If $w=1$, everything in \eqref{prop lands in completion final} reduces to $r=d_w=y=1$, and we need only recall that $Q_{1,x}(\bq)=1$ for any $x\in\Waff$. 
This follows from unicity of the inverse Kazhdan-Lusztig polynomials and the identity
$\sum_{x\leq w}(-1)^{\ell(x)}P_{x,w}(\bq)=0$ for $w\neq 1$ \cite[Exercise 5.17]{BjornerBrenti}.
\end{proof}
\begin{cor}
\label{cor BK map injective}
The map $\tilde{\phi}$ defined in \cite{BK} and recalled in diagram \eqref{eqn BK summary diagram} is 
injective.
\end{cor}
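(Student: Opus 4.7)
The plan is to argue injectivity of $\tilde{\phi}$ at a fixed $q_0 > 1$ through representation theory, exploiting that every simple $J$-module arises from a tempered $I$-spherical representation of $G$ at $q_0$. Suppose $j = \sum_w c_w t_w \in J$ lies in the kernel, so that $f_j := \sum_w c_w f_w$ vanishes as a Harish-Chandra Schwartz function on $G$. Then $\pi(f_j) = 0$ for every admissible representation $\pi$, and in particular for every simple tempered $I$-spherical $\pi$. By Theorem \ref{BK nsp theorem}, the Braverman-Kazhdan extension of the $H$-action on $\pi^I$ to $J$ is characterized by $t_w \mapsto \pi(f_w)$, so the hypothesis implies that $j$ acts as zero on $\pi^I$.

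Next I would identify simple $J$-modules with tempered $\pi^I$ at $q_0$. By Theorem \ref{J summary theorem} the simple $J$-modules correspond bijectively to admissible Kazhdan-Lusztig triples $(u, s, \rho)$; after the Goldman twist, those with $s$ compact correspond to simple tempered $\HH_\kappa$-modules of constant type. Since $q_0 > 1$ avoids every root of unity and hence every root of $P_W$, specializing $\bq = q_0$ turns each such $\HH_\kappa$-module into a simple tempered $H|_{\bq=q_0}$-module $\pi^I$. Crucially, the $J$-action on $\pi^I$ coming from this identification and the $J$-action from Theorem \ref{BK nsp theorem} both extend the same $H$-action via $\phi_{q_0} \circ ()^\dagger$, and the extension is unique by Theorem \ref{BK nsp theorem}(i); therefore they agree. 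Combining with the previous step, $j$ annihilates every simple $J$-module.

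Finally, I would conclude by invoking Jacobson semisimplicity of $J$. The decomposition $J = \bigoplus_\cc J_\cc$ and the known structural result that each $J_\cc$ is Morita equivalent to a matrix ring over a reduced commutative ring of representation-ring type shows that $\mathrm{Jac}(J) = 0$; hence an element annihilating every simple $J$-module must be zero, giving $j = 0$. The corresponding statement for $\eta$ follows at once from the Plancherel isomorphism, which identifies $\tilde{\phi}$ with $\eta$ up to isomorphism. The main subtlety, and the step I would expect to take the most care, is verifying the compatibility of the two $J$-actions on $\pi^I$ in the middle paragraph; once the uniqueness assertion in Theorem \ref{BK nsp theorem}(i) is invoked, the rest is bookkeeping, and the semisimplicity step is manageable cell-by-cell despite $J$ being infinite-dimensional.
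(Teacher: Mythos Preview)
Your approach differs genuinely from the paper's. The paper argues through the map $\phi_1\colon J\otimes_\C\mathcal{A}\to\HHh$ built from the Plancherel formula: Proposition~\ref{prop phi_1=phi inverse} identifies $\phi_1$ with $()^\dagger\circ\phi^{-1}$ and hence shows it is injective; for $j\neq 0$ one then chooses a prime power $q_0$ at which the associated Schwartz function is nonzero, obtains a tempered witness $K(u,s,\rho,q_0)$, and transfers to the given $q$ by noting that $K(u,s,\rho,q_0)$ and $K(u,s,\rho,q)$ restrict the same $J$-module $E(u,s,\rho)$. Your route stays entirely at the fixed $q_0$ and avoids both the Plancherel computations and the construction of $\phi_1$, which is conceptually attractive and would make the corollary independent of the rest of the paper.

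There is, however, a real gap in the middle paragraph. You correctly record that only triples $(u,s,\rho)$ with $s$ compact yield analytically tempered representations at $q_0>1$, yet you then assert that $j$ annihilates \emph{every} simple $J$-module---but you have only established this for compact $s$, while Theorem~\ref{J summary theorem} puts simple $J$-modules in bijection with \emph{all} admissible triples. The repair is a Zariski-density step: for fixed $u$ the matrix entries of the action of $j$ on the family of standard modules vary algebraically in $s$, and compact semisimple elements are Zariski-dense in the reductive quotient of $Z_{G^\vee}(u)$, so vanishing on the compact locus forces vanishing everywhere. A secondary concern is that your final step invokes the structure of $J_\cc$ as a matrix algebra over a representation ring; this is Lusztig's conjecture and is not uniformly established in the literature, so for a complete argument you should either restrict to cases where it is known or deduce $\mathrm{Jac}(J)=0$ by other means.
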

\begin{proof}
Let $q>1$ and let $j\neq 0$ be an element of $J$.
We must show that $\tilde{\phi}(j)\neq 0$. By injectivity of $\hat{\phi}$, we have $\hat{\phi}(j)\neq 0$. 
By definition of the map $\hat{\phi}$, this means that there exists $q_0>1$, $u,s\in G^\vee$ such that
$us=su$ with $s$ compact, and a representation $\rho$ of $\pi_0(Z_{G^\vee}(u,s))$ such that 
$jK(u,s,\rho, q_0)\neq 0$. But then $jK(u,s,\rho, q)\neq 0$, $K(u,s,\rho,q)$ being a different specialization of the restriction of the same $J$-module $E(u,s,\rho)$ as for $K(u,s,\rho, q_0)$, 
and $K(u,s,\rho, q)$ is also tempered. It follows that $\tilde{\phi}(j)\neq 0$.
%\todo[inline]{check that this is correct.}
\end{proof}

\subsection{The case of $\GL_n$}
\label{subsection proof of them conjection is true GLn for GLn}
\begin{theorem}
\label{thm conjecture is true GLn}
Let $\Waff$ be of type $\tilde{A}_n$. Then statements 1 and 2 in the statement of Theorem 
\ref{thm general G denominators} are true, together with a stronger version of statement 3:
Let $u=(r_1,\dots, r_k)$ be a unipotent conjugacy class in $\GL_n(\C)$.
Let $d$ be a distinguished involution in the two-sided cell $\cc(u)\subset\Waff(\GL_n)$ corresponding to $u$.
Let $\pi$ be the unique family of parabolic inductions that $t_d$
does not annihilate. Then $\rank\pi(t_d)=1$.
\end{theorem}
\begin{proof}
By Corollary \ref{corollary fw(1) denominator divides poincare}
and Propositions  \ref{prop phi1 existence} and \ref{prop phi_1=phi inverse}, $a_{1,x}$ is a 
rational function of $\bq$ for all $w$.
Then equation \eqref{prop lands in completion final} implies that $a_{x,w}$, being a sum of rational 
functions with Laurent polynomial coefficients, is a rational function of $\bq$ for all $x$. The same 
equation,
together with the fact that $J_\cc$ is a two-sided ideal for each cell $\cc$ shows that the denominator
of $a_{x,w}$ depends only on the two-sided cell containing $w$. This proves the first claim.
The second claim now follows from the first claim and the first statement of Corollary \ref{corollary fw(1) 
denominator divides poincare} and the fact that $\Waff$ has finitely-many two-sided cells.

Finally, let $u=(r_1,\dots, r_k)$. We have $\pi=\Ind_P^G(\St_{M_P}\otimes\nu)$, for the unique
Levi $M_P$ such that $u$ is distinguished in $M_P^\vee$, and as $\dim\St_{M_P}^I=1$, clearly we have
\[
\dim\pi^I=\frac{n!}{r_1!\cdots r_k!}.
\]
This is also the number of distinguished involutions in $\cc(u)$, by \cite{ShiLNM}.
The claim follows from Corollary \ref{cor BK map injective}.
\end{proof}
\subsection{Proof of Theorem \ref{thm general G denominators}}
\begin{proof}[Proof of Theorem \ref{thm general G denominators}]
Theorem \ref{theorem general G f_w(1) rational function with correct denominator} together with 
Propositions \ref{prop phi1 existence} and \ref{prop phi_1=phi inverse} show that 
$a_{1,w}$ is a rational function of $\bq$ with denominator depending only on the two-sided cell
containing $w$. Equation \eqref{prop lands in completion final} again shows that $a_{x,w}$
is a rational function of $\bq$ with denominator depending only on the two-sided cell
containing $w$; up to twists the set $\mathcal{E}_2(M)^I$ is finite for every Levi subgroup $M$,
so we may multiply through to include the denominators of all required formal degrees, which 
are in fact rational of the correct form by Theorem \ref{FOS theorem}. Therefore there is a 
polynomial $P_\G(\bq)$ that clears denominators of all $a_{x,w}$. This proves the first statement 
of the Theorem.

Now, by Proposition 9 and Remark 2 of \cite{BDD}, $\phi$ induces an surjection
\[
\bar{\phi}\colon \HH/[\HH,\HH]\left[\frac{1}{P_W(\bq) }\right]\onto J/[J,J]\otimes\mathcal{A}.
\]
Therefore for every $j\in J$, there is $N=N(j)\in\N$ and $h\in \HH$ such that
\[
j\equiv h\frac{1}{P_{W}(\bq)^N}
\]
in $J/[J,J]\otimes\mathcal{A}$. Considering traces and invoking Proposition \ref{prop phi_1=phi inverse}, 
we see that the denominator of every $a_{x,w}$ divides a power of $P_{W}(\bq)$. Therefore there is 
$N=N_{\Waff}$ depending only $\Waff$ such that we can take $P^1_\G(\bq)=P_W(\bq)^N$. This proves the 
second statement of the Theorem. The third statement was proven in Corollary \ref{cor denominators of fd(1) poincare}.
\end{proof}
\section{Representations with fixed vectors under parahoric subgroups}
\label{section parahoric-fixed}
In this section we will give an application of the $J$-action on the tempered 
$H$-modules. The first statement is an immediate corollary of the existence
of this action, but the second statement relies on Corollary 
\ref{cor BK map injective}. 
\begin{theorem}
\label{thm existence of parahoric-fixed vectors}
Let $\Ppp$ be a parahoric subgroup of $G$. Let $\pi$ be a simple tempered representation of $G$ 
with $I$-fixed vectors with Kazhdan-Lusztig parameter $(u,s,\rho)$. Let $w_\Ppp$ be the longest
element in the parabolic subgroup of $\Waff$ defined by $\Ppp$ and $\Bb_u^\vee$ be the Springer fibre
for $u$.
\begin{enumerate}
\item 
If 
\[
\ell(w_\Ppp)> a(u)=\dim_\C\Bb_u^\vee,
\]
then 
\[
\pi^{\Ppp}=\{0\}.
\]
\item
Conversely, let $u_\Ppp$ be the unipotent conjugacy class corresponding to the two-sided cell
containing $w_{\Ppp}$. Then there exists $s\in Z_{G^\vee}(u_\Ppp)$, a Levi subgroup $M^\vee$ of $G^\vee$ 
minimal such that $(u_\Ppp,s)\in M^\vee$, and a discrete series representation $\omega\in\mathcal{E}_2(M)$ 
such that 
\[
\pi^{\Ppp}=i_{P_M}^G(\omega\otimes\nu)^{\Ppp}\neq \{0\}
\]
for all $\nu$ non-strictly positive and the parameter of $\pi$ is $(u_\Ppp,s)$.
\end{enumerate}
\end{theorem}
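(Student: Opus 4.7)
The plan is to identify the standard projector onto $\Ppp$-fixed vectors with a specific Hecke algebra element built from $w_\Ppp$, and then combine Xi's cell-filtration vanishing theorem with the injectivity of the Braverman--Kazhdan map established in Corollary \ref{cor BK map injective}. The crucial structural observation is that $w_\Ppp$, being the longest element of the finite parabolic $W_\Ppp \subset \Waff$, is a distinguished involution satisfying $a(w_\Ppp) = \ell(w_\Ppp)$, so the two-sided cell $\cc_\Ppp$ containing $w_\Ppp$ corresponds under Lusztig's bijection to a unipotent class $u_\Ppp$ with $\dim \Bb_{u_\Ppp}^\vee = \ell(w_\Ppp)$. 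Moreover, since $P_{y,w_\Ppp}(\bq) = 1$ for all $y \leq w_\Ppp$, we have $\sum_{w \in W_\Ppp} T_w = \bq^{\ell(w_\Ppp)/2} C'_{w_\Ppp}$, so the idempotent $e_\Ppp \in \HH$ projecting any $I$-spherical representation onto its $\Ppp$-fixed vectors is (as an operator on $\pi^I$) a nonzero scalar multiple of $C'_{w_\Ppp}$.

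For part (1), I would next note that $C'_{w_\Ppp} = (-1)^{\ell(w_\Ppp)} C_{w_\Ppp}^\dagger$ by Lemma \ref{lem Goldman involution exchanges KL bases}, and the Goldman involution preserves the two-sided ideals $\HH^{\geq i}$ (it is an algebra involution permuting cells while preserving $a$-values). Hence $C'_{w_\Ppp} \in \HH^{\geq \ell(w_\Ppp)}$. Invoking Xi's theorem recalled in Section \ref{subsubsubection Denominators and the KL classification at roots of 1}, any simple tempered $\HH$-module $\pi$ with Kazhdan-Lusztig parameter $(u,s,\rho)$ satisfies $\pi(\HH^{\geq i}) = 0$ for $i > a(u) = \dim \Bb_u^\vee$. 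The assumption $\ell(w_\Ppp) > \dim \Bb_u^\vee$ therefore forces $\pi(e_\Ppp) = 0$, i.e.\ $\pi^\Ppp = \{0\}$.

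For part (2), I would first invoke \cite[Thm.~8.3]{KLDeligneLanglands} to produce $s \in Z_{\Gd}(u_\Ppp)$, a minimal Levi $M^\vee \ni (u_\Ppp,s)$, and a discrete series $\omega \in \mathcal{E}_2(M)^I$ such that simple tempered representations with Kazhdan-Lusztig parameter $(u_\Ppp,s,\rho)$ arise as summands of $i_{P_M}^G(\omega \otimes \nu)$ for unitary $\nu$. Since $w_\Ppp$ is a distinguished involution, $t_{w_\Ppp}$ is an idempotent in $J_{\cc_\Ppp}$, so $\pi(t_{w_\Ppp})$ is an idempotent on $\pi^I$; by Corollary \ref{cor BK map injective}, $\tilde\phi(t_{w_\Ppp}) = f_{w_\Ppp} \neq 0$, so $\pi(t_{w_\Ppp}) \neq 0$ for some tempered $\pi$ in this family, and by Proposition \ref{prop plancherel decomposition compatible with cell decomposition} such $\pi$ must have parameter in $\cc_\Ppp$. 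By Lemma \ref{lem trace td is constant}, the rank of $\pi(t_{w_\Ppp})$ is constant on the Bernstein component; combined with Theorem \ref{BK nsp theorem}(2) extending the $J$-action to $i_{P_M}^G(\omega \otimes \nu^{-1})$ for $\nu$ non-strictly positive, this non-vanishing persists throughout the non-strictly positive region.

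The hard part will be showing that the image of the projector $\pi(t_{w_\Ppp})$ lies in $\pi^\Ppp$; granting this, the non-vanishing $\pi(t_{w_\Ppp}) \neq 0$ established above will yield $\pi^\Ppp \supset \mathrm{Im}(\pi(t_{w_\Ppp})) \neq 0$. Concretely, I need $e_\Ppp \ast f_{w_\Ppp} = f_{w_\Ppp}$ in $\Cc(G)^{I \times I}$, i.e.\ that the $\HHh_{C'}$-expansion $\phi_1(t_{w_\Ppp}) = (-1)^{\ell(w_\Ppp)} C'_{w_\Ppp} + (\text{tail})$, coming from Proposition \ref{prop phi_1=phi inverse}, has its tail absorbed by left multiplication by $e_\Ppp$. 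The leading term is handled by unwinding $\phi(C_{w_\Ppp}) = t_{w_\Ppp} + \sum_z c_z t_z$ (where $z$ runs over elements outside $\cc_\Ppp$, contributing to deeper ideals under Proposition \ref{prop plancherel decomposition compatible with cell decomposition}); for the tail terms I would use the identity $t_{w_\Ppp}^2 = t_{w_\Ppp}$ in $J$, which transported through $\phi_1$ forces each tail coefficient to lie in the left ideal generated by $C'_{w_\Ppp}$, reducing the remaining vanishing to the same cell-filtration argument used in part (1).
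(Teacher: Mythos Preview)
Your argument for part (1) contains a genuine error. You claim that the Goldman involution preserves the filtration $\HH^{\geq i}$, but this is false: already for $\SL_2$ one has $C'_s = C_s + (\bq^{1/2}+\bq^{-1/2})C_1$, so $C'_s = -C_s^\dagger \notin \HH^{\geq 1} = \mathcal{A}C_s$. The parenthetical justification (``permuting cells while preserving $a$-values'') conflates the combinatorial cells in $\Waff$ with the ideals $\HH^{\geq i}$; the involution $()^\dagger$ acts on $\HH$, not on $\Waff$, and does not send $\spn\{C_w : a(w)\geq i\}$ to itself. The paper's proof avoids this trap entirely: since the tempered $H$-action on $\pi^I$ factors as $\pi(h) = E\bigl(\phi_q(h^\dagger)\bigr)$ for the extended $J$-module $E$, one applies the filtration \emph{after} the dagger. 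That is, $(C'_{w_\Ppp})^\dagger = (-1)^{\ell(w_\Ppp)}C_{w_\Ppp}$ genuinely lies in $\HH^{\geq \ell(w_\Ppp)}$, so $\phi_q(C_{w_\Ppp})\in \bigoplus_{a(\cc')\geq \ell(w_\Ppp)} J_{\cc'}\otimes\mathcal{A}$ by the upper-triangularity of $\phi$, and this annihilates $E$ living over the cell $\cc(u)$ once $\ell(w_\Ppp) > a(u)$. Your fix is therefore simply to move the dagger to the other side.

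For part (2) your outline is broadly correct but the ``hard part'' is made harder than necessary. You propose to analyse the $C'$-expansion of $\phi_1(t_{w_\Ppp})$ and use $t_{w_\Ppp}^2 = t_{w_\Ppp}$ to show the tail lies in the left ideal generated by $C'_{w_\Ppp}$; this is vague, and there is no evident mechanism by which idempotency in $J$ forces that ideal containment term-by-term in $\HHh_{C'}$. The paper instead computes $(\phi\circ()^\dagger)(C'_{w_\Ppp})\cdot t_{w_\Ppp}$ directly in $J$: after the dagger this is $(-1)^{\ell(w_\Ppp)}\sum_{d,z}h_{w_\Ppp,d,z}t_z t_{w_\Ppp}$, and since $t_z t_{w_\Ppp}\neq 0$ forces $z\sim_L w_\Ppp$, only $d = w_\Ppp$ survives. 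The key input is then that $C_{w_\Ppp}^2$ is a scalar multiple of $C_{w_\Ppp}$ (equivalently, $1_\Ppp$ is idempotent up to $\vol(\Ppp)$), so $h_{w_\Ppp,w_\Ppp,z}$ is supported at $z = w_\Ppp$, and the whole expression collapses to $\vol(\Ppp)\,t_{w_\Ppp}$. Translating back via Lemma~\ref{lemma tilde phi is inverse to phi on H}, this is exactly your desired identity $e_\Ppp * f_{w_\Ppp} = f_{w_\Ppp}$, obtained in one line rather than via tail-chasing.
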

\begin{proof}
Let $\Ppp$ and $w_\Ppp$ be as in the statement. Then
$C'_{w_\Ppp}$ is proportional by a power of $q$ to the indicator function
$1_{\mathcal{P}}$ in $H$. Moreover, $w_\Ppp$ is a distinguished 
involution, with $a(w_\mathcal{P})=\ell(w_\Ppp)$. By Proposition 
\ref{prop phi_1=phi inverse} and the fact that $\phi$ is ``upper-triangular" 
with respect to the $a$-function, we have $(\phi\circ ()^\dagger)(C'_{w_\Ppp})J_\cc=0$ for $\cc$
corresponding to $u$ if $a(w_{\mathcal{P}})>a(u)$.

For the second statement, by Corollary \ref{cor BK map injective}, 
there is a tempered representation $\pi=\Ind_P^G(\omega\otimes\nu)$ with $\pi(t_{w_\Ppp})\neq 0$; the 
unipotent part of its parameter is $u_\Ppp$.
In particular, there is a vector $v\in\pi$ such that $\pi(t_{w_\Ppp})v=v$.
We have 
\[
\left(\phi\circ^\dagger(-)\right)(C'_{w_\mathcal{P}})t_{w_P}
=(-1)^{\ell(w_{\mathcal{P}})}\sum_{\overset{d\in\mathcal{D}}{z\sim_L d}}h_{w_{\mathcal{P}},d,z}t_zt_{w_{\mathcal{P}}}
=
(-1)^{\ell(w_{\mathcal{P}})}\sum_{z}h_{w_{\mathcal{P}},w_{\mathcal{P}},z}t_z
=\vol(\mathcal{P})t_{w_P},
\]
as $t_zt_{w_{\mathcal{P}}}\neq 0$ only if $z\sim_Lw_{\mathcal{P}}$, and 
$C_{w_{\mathcal{P}}}C_{w_{\mathcal{P}}}=(-1)^{\ell(w_{\mathcal{P}})}\vol(\mathcal{P})C_{w_{\mathcal{P}}}$.
Thus $v\in\pi^\mathcal{P}$. As $\trace{\pi}{t_d}$ is constant with 
respect to $\nu$, the last part of the claim follows.
\end{proof}
A version of this statement for enhanced parameters in the case
$\Ppp=G(\Oo)$ appears in \cite[Prop. 10.1]{APM}.
\begin{ex}
The principal series representations have $u=\{1\}$, maximal $a$-value, 
and fixed vectors under every maximal compact subgroup of $G$. On
the other extreme, the Steinberg representation has $u$ regular, 
and does not have fixed vectors under any proper parahoric.
\end{ex}
In addition to the interpretation given in Section \ref{subsubsection intro application parahoric},
in the examples below, the $J$-action also detects which direct summands of a reducible $\mathcal{P}$-spherical tempered representation are themselves $\mathcal{P}$-spherical.
\begin{ex}
Let $G=\SL_2(F)$. As remarked in \cite{D}, the distinguished involutions
in the lowest two-sided cell are the simple reflections $s_0$ and $s_1$,
and each $t_{s_i}$ is invariant under one of the two conjugacy classes 
of maximal parahoric subgroup of $G$.
For unitary principal series representations $\pi$, one has 
$\trace{\pi}{t_{s_0}}=\trace{\pi}{t_{s_1}}=1$. At the quadratic character,
the corresponding principal series representation is reducible, and each
summand contains fixed vectors under precisely one of the maximal parahorics.
Indeed, in \cite{D} this computation is carried out at the level of the Schwartz space of the basic affine space.
\end{ex}
\begin{ex}
Let $G=\SO_5(F)=\PGSp_4(F)$, with affine Dynkin diagram labelled as 
\begin{center}
\dynkin[labels={0,1,2}] B[1]{2}.
\end{center}
There are five conjugacy classes of parahoric subgroups, each obtained by projection from $\GSp_4(F)$: the maximal parahoric $\PGSp_4(\Oo)$, 
the image of the paramodular group $\mathsf{K}$ corresponding to $\{0,2\}$, the image of the Siegel parahoric subgroup $\mathsf{Si}$ corresponding to $\{1\}$, the image of the Klingen parahoric 
$\mathsf{Kl}$ corresponding to $\{0\}$, and the Iwahori subgroup.

The columns of the below table give all the $I$-spherical tempered representations of $G$ by denoting
the representation $\Ind_P^G(\omega)$ by $\omega$. We recall
the few cases of reducibility of these inductions immediately below. The rows list unipotent conjugacy 
classes in $\Gd=\Sp_4(\C)$ such that all tempered standard modules 
$K(u,s,\rho)$ are in row $u$.
That is, the rows record which summand of $J$ acts on each representation.

\begin{center}
\begin{tabular}{|c|c|c|c|c|}
\hline 
Cell\textbackslash Levi & $\GL_1\times\GL_1$ & $\GL_1\times\SO_3$  &$\GL_2$ & $\SO_5$ \\ 
\hline 
$(1,\dots, 1)$ & $\nu$ &  &  &  \\ 
\hline 
$(2,1,1)$ &  & $\nu\otimes\pm\St_{\SO_3}$ &  &  \\ 
\hline 
$(2,2)$ &  &  & $\xi\St_{\GL_2}$ & $\pm\tau_2$ \\ 
\hline 
$(4)$ &  &  &  & $\pm\St$ \\ 
\hline 
\end{tabular} 
\end{center}
The discrete series are as in \cite{ReederFormalDeg}. 
The only reducibility, by \cite[Prop. 3.3]{Matic}, is the reducibility
\[
\Ind_P^G(\xi\St_{\GL_2})=\tau_\mathrm{triv}\oplus\tau_\mathrm{sgn}
\]
for $\xi^2=1$.

We can now compute the traces of some elements $t_d$ using the description
of the simple $J$-modules given in \cite{XiBook}. For the cells
$(1,\dots, 1)$, $(2,1,1)$, and $(4)$, we have $\trace{\pi}{t_d}=1$  for all $\pi$.
We have
\[
\trace{\tau_{\triv}}{t_{s_0}}=\trace{\tau_2}{t_{s_0}}=1=\trace{\tau_{\sgn}}{t_{s_1},}=\trace{\tau_{\triv}}{t_{s_1}}.
\]

In the below table, $\trace{\pi}{t_d}$ is recorded in bold face, whereas the dimension of 
$\pi^{\Ppp}$, taken from \cite[Table A.15]{BrooksSchmidt}, is recorded in normal face. Representations
attached to the same cell but belonging to different packets are separated with a dotted line.

\begin{center}
\begin{tabular}{|c|c|cccccc|c|}
\hline 
$t_{w_{\mathcal{P}}}$\textbackslash\, $\pi=\Ind_P^G(-)$ & $a$ & \multicolumn{1}{|c|}{$\nu$} & \multicolumn{1}{|c|}{$\nu\otimes\pm\St_{\SO_3}$} & $\tau_\mathrm{triv}$ & $\tau_\mathrm{sgn}$ & \multicolumn{1}{:c|}{$\pm\tau_2$}  & $\pm\St$ & $\Ppp$\\ 
\hline 
$t_{w_0}$ & 4 & \multicolumn{1}{c|}{\textbf{1}} &  &  &  &  & & $\PGSp_4(\Oo)$ \\ 
\cline{1-4} \cline{9-9}
$t_{s_0s_2}$ & 2 & 2 &  \multicolumn{1}{|c|}{\textbf{1}} &  &  &  & & $\mathsf{K}$\\ 
\cline{1-2} \cline{4-7} \cline{9-9}
$t_{s_0}$&1 & 4 & 2 & \multicolumn{1}{|c}{\textbf{1}} &  & \multicolumn{1}{:c|}{\textbf{1}} &  & $\mathsf{Kl}$ \\ \cline{9-9}
$t_{s_1}$ & 1 & 4  & 1  & \multicolumn{1}{|c}{\textbf{1}} & \textbf{1} & \multicolumn{1}{:c|}{} & & $\mathsf{Si}$ \\ \cline{1-2}\cline{5-9}
$t_1$ & 0 & 8 & 4 & 3 & 1 & 4  & \multicolumn{1}{|c|}{\textbf{1}} & $I$\\ 
\hline 
\end{tabular} 
\end{center}
\end{ex}

\paragraph{Acknowledgements.}
The author thanks Alexander Braverman for introducing him to this material and the problem, for suggesting the use of the Plancherel formula, and for helpful conversations.
The author also thanks Roman Bezrukavnikov,  Itai Bar-Natan,
Gurbir Dhillon, Malors Espinosa-Lara, Julia Gordon, George Lusztig, and Matthew Sunohara for helpful conversations. The author thanks Maarten Solleveld for explaining the generalization in \cite{Solleveld} of the results of \cite{fomFormalDegUnip}, suggesting the reference \cite{APM}, and for careful reading of an early version of this manuscript. 
The author owes a great debt to the anonymous referee for careful reading resulting in many corrections and improvements both mathematical and expositional. 

Some of this work was completed at 
the Max Planck Institute in Mathematics for Bonn, the directors and staff of which the author thanks for their
hospitality. This research was supported by NSERC.

\paragraph{Competing interests:} The author declares none.

\bibliography{plancherel_paper_biblio.bib}

\end{document}